\documentclass[reqno,10pt,a4paper, english]{amsart}

\usepackage{amsmath} % utilities for mathematics
\usepackage{amsthm} % utilities for theorem environment
\usepackage{amssymb} % loads mathematical fonts
\usepackage[dvipsnames]{xcolor}
\usepackage{amscd} % utilities for commutative diagrams
\usepackage{mathtools}
\usepackage[ddmmyyyy,hhmmss]{datetime}
\usepackage[colorinlistoftodos,bordercolor=orange,backgroundcolor=orange!20,linecolor=orange,textsize=scriptsize]{todonotes}

\usepackage{subcaption}% <-- added

\usepackage{array} % core package

\usepackage{stix2}

\usepackage[cal=boondoxo,scr=euler]{mathalfa}
\usepackage[linktocpage]{hyperref} % permits navigating on the pdf
\usepackage{cleveref} % smart referencing
\usepackage{caption} %
\usepackage{graphics,graphicx} % permits putting images
\usepackage{tikz,tikz-cd, tikz-3dplot} % tool for diagrams
\usepackage{contour} % contour around text for diagrams
\contourlength{0.125em}
\usetikzlibrary{arrows,arrows.meta}

\usepackage{float} % better management of the position of a figure

\usetikzlibrary{fit,matrix,graphs,graphs.standard,calc,shapes.geometric, arrows}
\usetikzlibrary{decorations.pathreplacing,}%angles,quotes

\usetikzlibrary{automata}
\usepackage[shortlabels]{enumitem} 

\DeclareMathAlphabet{\mathsf}{OT1}{\sfdefault}{m}{n}

\newcommand{\nocontentsline}[3]{}
\newcommand{\tocless}[2]{\bgroup\let\addcontentsline=\nocontentsline#1{#2}\egroup}

\usepackage[margin=1.4in]{geometry}

\usepackage{verbatim}

\usepackage{scalerel}

\usepackage{multirow}

\makeatletter
\def\dual#1{\expandafter\dual@aux#1\@nil}
\def\dual@aux#1/#2\@nil{\begin{tabular}{@{}c@{}}#1\\#2\end{tabular}}
\makeatother

\makeatletter
\@namedef{subjclassname@2020}{\textup{2020} Mathematics Subject Classification}
\makeatother

\renewcommand{\tilde}{\widetilde}

\DeclareMathAlphabet{\amathbb}{U}{bbold}{m}{n}

\hypersetup{
    colorlinks = true,
    linkbordercolor = {white},
    linkcolor = {BrickRed},
    anchorcolor = {black},
    citecolor = {BrickRed},
    filecolor = {cyan},
    menucolor = {BrickRed},
    runcolor = {cyan},
    urlcolor = {black}
}

\tikzstyle{rectan} = [rectangle, rounded corners, 
minimum width=1.5cm, 
minimum height=0.75cm,
text width=3cm,
text centered, 
draw=black,
font = \footnotesize,
]

\tikzstyle{ghost} = [circle, 
minimum width=1pt, 
minimum height=1pt,
text width=1pt,
text centered, 
draw=black,
font = \footnotesize,
]

\newtheoremstyle{teoremas}% <name>
{11pt}% <Space above>
{11pt}% <Space below>
{\itshape}% <Body font>
{}% <Indent amount>
{\bfseries}% <Theorem head font>
{}% <Punctuation after theorem head>
{.5em}% <Space after theorem headi>
{}% <Theorem head spec (can be left empty, meaning `normal')>

\theoremstyle{teoremas}
\newtheorem{theorem}{Theorem}[section]
\newtheorem{corollary}[theorem]{Corollary}
\newtheorem{lemma}[theorem]{Lemma}
\newtheorem{proposition}[theorem]{Proposition}

\newtheoremstyle{definition}% <name>
{11pt}% <Space above>
{11pt}% <Space below>
{}% <Body font>
{}% <Indent amount>
{\bfseries}% <Theorem head font>
{}% <Punctuation after theorem head>
{.5em}% <Space after theorem headi>
{}% <Theorem head spec (can be left empty, meaning `normal')>

\theoremstyle{definition}
\newtheorem{definition}[theorem]{Definition}

\newtheorem{example}[theorem]{Example}

\crefname{theorem}{theorem}{theorems}
\Crefname{theorem}{Theorem}{Theorems}
\crefname{lemma}{lemma}{lemmas}
\Crefname{lemma}{Lemma}{Lemmas}
\crefname{proposition}{proposition}{propositions}
\Crefname{proposition}{Proposition}{Propositions}

\DeclareMathOperator{\Bier}{Bier}

\newcommand{\R}{\mathbb{R}}
\newcommand{\Z}{\mathbb{Z}}

\renewcommand{\H}{\mathrm{H}}

\newcommand{\Int}{\operatorname{Int}}

\newcommand{\rev}{\operatorname{rev}}

\newcommand{\zero}{\widehat{0}}
\newcommand{\one}{\widehat{1}}

\let\oldhat\hat
\renewcommand{\hat}[1]{\widehat{#1}}

\AtBeginDocument{%
   \def\MR#1{}
}

\usepackage{todonotes}

\newcommand{\Stell}{\operatorname{Stell}}
\newcommand{\DStell}{\operatorname{DStell}}

\title{Stellahedral geometry of partially ordered sets}

\author{Tommaso Faustini}
\address{Department of Mathematics, University of Warwick, Warwick, United Kingdom}
\email{tommaso.faustini@warwick.ac.uk}

\author{Luis Ferroni}
\address{Dipartimento di Matematica, Universit\`a di Pisa, Pisa, Italy}
\email{luis.ferroni@unipi.it}

\author{Ludovico Piazza}
\address{Dipartimento di Matematica, Universit\`a di Pisa, Pisa, Italy}
\email{l.piazza5@studenti.unipi.it}
\thanks{}

\subjclass[2020]{Primary: 06A07, 05B35, 52B05, 05A20}

\allowdisplaybreaks
\begin{document}

\begin{abstract}
    We introduce a transformation on partially ordered sets, termed the \emph{stellahedral transform}, with notable features. 
    It preserves the properties of being Eulerian, Cohen--Macaulay, and of being the face poset of a polytope. 
    Furthermore, it admits an explicit geometric realization for convex polytopes and specializes to the construction that takes a simplex to the stellahedron. One motivation for this definition comes from the theory of toric $h$-polynomials and (augmented) Chow polynomials of Eulerian posets. 
    We show that the right augmented Chow polynomial of an Eulerian poset $P$ agrees with the toric $h$-polynomial of the stellahedral transform of $P$. We use this perspective, together with $\mathbf{cd}$-index results due to Ehrenborg (2005) and Karu (2006), to prove two positivity results for augmented Chow polynomials: for Gorenstein* posets they are unimodal, and for face posets of polytopes they are $\gamma$-positive. 
    
    Along the way we provide negative answers to two open questions concerning Eulerian and Gorenstein* posets. First, the question on the nonnegativity of Eulerian Chow polynomials, posed by Ferroni, Matherne, and Vecchi (2024). Second, the question posed by Athanasiadis and Kalampogia-Evangelinou (2023) on the real-rootedness of chain and Chow polynomials of Gorenstein* posets: these examples provide a novel application of a technique introduced by Murai and Nevo (2014).
\end{abstract}

\date{\today~at \currenttime}

\maketitle

\section{Introduction}\label{sec:one}

\subsection{Overview}

Over the last decade, the geometry of the permutahedron and the stellahedron has played a prominent role in several developments in matroid theory, particularly in its Hodge-theoretic aspects (see \cite{adiprasito-huh-katz,stellahedral,braden-huh-matherne-proudfoot-wang}). For the Boolean matroid, the cohomology rings of these two polytopes recover, respectively, its Chow ring and its augmented Chow ring. More generally, Chow rings and augmented Chow rings are central objects in the Hodge theory of matroids and enter, in particular, into the construction of matroid intersection cohomology.

The purpose of this paper is to introduce a poset-theoretic construction that extends the stellahedral side of this picture beyond the Boolean case. We introduce a transformation, which we call the \emph{stellahedral transform}, that associates with a bounded graded poset $P$ of rank $r$ a bounded graded poset $\Stell(P)$ of rank $r+1$.

\begin{definition}
Let $P$ be a poset with minimum element ${\zero}$. We define the \emph{stellahedral transform} of $P$ as the poset
\[
\Stell(P)= \left\{ (c,y):  c=\left\{{\zero} =c_0 < c_1 < \dots < c_k\right\} \text{ chain in $P$},\,y \in P,\, c_k \leq y\right\} \sqcup \left\{{\one}_{\Stell}\right\},
\]
where ${\one}_{\Stell}$ is a maximum element and
\[
(c,y) \le (c',y') \iff c \subseteq c' \text{ and } y \ge y' \text{ in $P$}.
\]
\end{definition}

The definition combines the order relation of $P$ with the combinatorics of chains in $P$, and is inspired by (but is not directly related to) the definition of the augmented Bergman complex of a matroid, by Braden, Huh, Matherne, Proudfoot, and Wang \cite{semismall}. At first glance there is little reason to expect the resulting poset to actually retain any good structural properties that $P$ possesses. As it turns out, the transformation in fact does preserve many of them.

\begin{theorem}\label{thm:intro-preservation}
The stellahedral transform preserves the properties of being Eulerian and Cohen--Macaulay; in particular, it preserves the property of being Gorenstein*.
\end{theorem}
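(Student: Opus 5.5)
The plan is to reduce everything to the structure of intervals in $\Stell(P)$ and argue by induction on the rank $r$ of $P$. Write the rank function as $\rho(c,y)=k+(r-\operatorname{rk}y)$ for $c=\{\zero=c_0<\dots<c_k\}$, with $\rho(\one_{\Stell})=r+1$. Both properties are inherited by intervals, and Gorenstein* is just Eulerian together with Cohen--Macaulay. So it suffices to prove two things: every interval of $\Stell(P)$ is Eulerian whenever $P$ is, and every interval has the right homology whenever $P$ is Cohen--Macaulay.

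\textbf{Step 1 (intervals not containing the top).} For $(c,y)\le(c',y')$ I expect
\[
[(c,y),(c',y')]\;\cong\;B_{c'\setminus c}\times [y',y]^{\mathrm{op}},
\]
where $B$ is a Boolean lattice. The point is that any $z$ with $y'\le z\le y$ satisfies $z\ge y'\ge c'_{\max}$, so the chain condition is automatic. Boolean lattices are Eulerian and Cohen--Macaulay, and duals of intervals of $P$ inherit these properties. Products of bounded posets preserve both the Eulerian and Cohen--Macaulay properties (Walker's theorem for the latter). So these intervals are fine.

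\textbf{Step 2 (intervals $[(c,y),\one_{\Stell}]$).} An element above $(c,y)$ is a chain $c'\supseteq c$ with $c'_{\max}\le y'\le y$. The new elements of $c'$ distribute independently into the open intervals $(c_{i-1},c_i)$, together with a chain starting at $c_k$ inside $[c_k,y]$ paired with $y'$. This suggests
\[
[(c,y),\one]\;\cong\;\Bigl(\prod_{i=1}^{k}\mathcal F\bigl(\Delta(c_{i-1},c_i)\bigr)\times \bigl(\Stell([c_k,y])\setminus\{\one\}\bigr)\Bigr)\cup\{\one\},
\]
where $\mathcal F(\Delta)$ is the face poset, including the empty face, of the order complex. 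When $P$ is Eulerian (respectively Cohen--Macaulay), each $\Delta(c_{i-1},c_i)$ is a homology sphere (respectively a Cohen--Macaulay complex), so its face poset with a top adjoined is Gorenstein* (respectively Cohen--Macaulay). The operation ``remove tops, take product, re-adjoin top'' is Stanley's dual-join (star) product. Its proper part has order complex homeomorphic to the join of the proper parts, so it preserves both the Eulerian and the Cohen--Macaulay properties. If $c\neq\{\zero\}$ or $y\neq\one$, then $[c_k,y]$ has rank less than $r$, and induction applies.

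\textbf{Step 3 (the whole poset).} This is the case $c=\{\zero\}$, $y=\one_P$, which must be handled directly.
\begin{itemize}
\item \emph{Eulerian.} All proper intervals are Eulerian by Steps 1--2, so it remains to check that $\sum_{x}(-1)^{\rho(x)}=0$ over all $x\in\Stell(P)$. Fix the chain $c$ and sum over $y\ge c_k$. The inner sum is $\pm\sum_{y\in[c_k,\one]}(-1)^{\operatorname{rk} y}$, which vanishes because $P$ is Eulerian unless $c_k=\one$. What survives is
\[
\sum_{\zero=c_0<\dots<c_k=\one}(-1)^k=\mu_P(\zero,\one)=(-1)^r,
\]
and the top element contributes $(-1)^{r+1}$. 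The total is $0$.
\item \emph{Cohen--Macaulay.} We must show that the proper part $\overline{\Stell(P)}$ has homology only in degree $r-1$. This is the main obstacle. My first attempt is the poset fiber theorem of Björner--Wachs--Welker, applied to the projection $(c,y)\mapsto y$ onto $P^{\mathrm{op}}$ (suitably truncated), or alternatively the projection $(c,y)\mapsto c_k$. The fibers $f^{-1}(\ge z)$ are chains ending below $z$. Using Step 2 they should decompose into joins of order complexes of intervals of $P$, which are Cohen--Macaulay of known dimension, so their homology is concentrated in one degree. The theorem would then express $\overline{\Stell(P)}$ as a wedge of spheres of dimension $r-1$. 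If the fiber connectivity does not come out cleanly, the fallback is a direct Mayer--Vietoris induction, filtering $\Stell(P)$ by the rank of $y$.
\end{itemize}
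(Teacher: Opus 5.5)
\textbf{The Eulerian half works; the Cohen--Macaulay half has a real gap at the top interval.} Your Steps 1 and 2 are correct, apart from one faulty justification discussed below. Together with the alternating-sum computation in Step 3, they give a complete, self-contained proof of the Eulerian half, and it is genuinely different from the paper's.

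In the Cohen--Macaulay half, Steps 1--2 reduce everything to the top interval, given the result in lower rank. What is left is exactly the claim $\widetilde H_i(\Delta(\overline{\Stell(P)});\mathbb{Q})=0$ for $i<r-1$. Step 3 only names tools for this, and as set up they do not apply:
\begin{itemize}
\item You propose two projections, $(c,y)\mapsto y$ into $P^{\mathrm{op}}$ and $(c,y)\mapsto c_k$ into $P$. In both cases the target is bounded. So in either orientation of Quillen's lemma or the Bj\"orner--Wachs--Welker fiber theorem, the fiber over the top (or bottom) of the target is all of $\overline{\Stell(P)}$, and the argument is circular.
\item All the content is therefore hidden in the unspecified ``truncation''.
\item These fibers are not upper intervals of $\Stell(P)$, so Step 2 does not describe them. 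The promised join decomposition does not exist as stated.
\end{itemize}

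To close the gap you have to compute the homotopy type, for instance by proving $\Delta(\overline{\Stell(P)})\simeq\Sigma\,\Delta(\overline{P})$, which is what happens for polytopes. One possible start:
\begin{itemize}
\item $\overline{\Stell(P)}$ is the disjoint union of the down-set $D=\{(\{\zero\},y): y<\one\}$ and the up-set $U=\{(c,y): c\neq\{\zero\}\}$.
\item $D$ has a maximum, $(\{\zero\},\zero)$, so it is contractible.
\item $U$ is contractible via the order-preserving map $(c,y)\mapsto(c,\one)\le(c,y)$, whose image is a cone.
\item Hence $\Delta(\overline{\Stell(P)})$ is homotopy equivalent to the suspension of the poset of comparable pairs $d<u$ with $d\in D$, $u\in U$.
\item You would still need to show that this pair poset has rational homology only in degree $r-2$.
\end{itemize}

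\textbf{How the paper handles the global step.} The paper avoids this computation altogether. It identifies $\Stell(P)=\Bier(P^{\uparrow},I_P)$, where $P^{\uparrow}=I_P\sqcup P$ is the chain lift, and checks interval by interval that $P^{\uparrow}$ is Eulerian and Cohen--Macaulay. The only nontrivial case is $(p,q)$ with $p\in I_P$, $q\in P$ and $\max p<q$. There it uses Mayer--Vietoris with two contractible pieces that meet in an upper interval of the face poset of $\Delta(\zero,q)$. It then imports the Bj\"orner--Paffenholz--Sj\"ostrand--Ziegler theorems that Bier posets inherit both properties. That external theorem is precisely what controls the global topology your argument is missing.

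\textbf{Fix needed in Step 2.} If $P$ is only Eulerian, $\Delta(c_{i-1},c_i)$ need not be a homology sphere; that holds in the Gorenstein* case. The correct reason that $\mathcal F(\Delta(c_{i-1},c_i))\cup\{\one\}$ is Eulerian is different. Every link in $\Delta(c_{i-1},c_i)$ is a join of order complexes of smaller open intervals of $P$. It therefore has the reduced Euler characteristic of a sphere of its dimension. This is Stanley, EC1 Exercise~3.141(c), which the paper also uses.
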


The polyhedral geometric behavior of the construction is even more concrete. For a convex polytope $\mathcal{P}$ we will denote by $\mathscr{F}(\mathcal{P})$ its face poset and $\mathcal{P}^*$ the polar dual (whenever it is defined). The following result says that if $P$ is the face poset of a polytope, then so is $\Stell(P)$. Moreover, the corresponding polytope is obtained by a canonical sequence of stellar subdivisions of the pyramid over its polar dual.

\begin{theorem}\label{thm:intro-polytopal}
Let $\mathcal{P}\subseteq \mathbb{R}^n$ be a full-dimensional convex
polytope with face poset $P=\mathscr{F}(\mathcal{P})$. Then
\[
\Stell(P) \cong \mathscr{F}\bigl(\DStell(\mathcal{P}^*)\bigr).
\]
Here $\DStell(\mathcal{P}^*)$ is obtained by taking the pyramid
$\operatorname{Pyr}(\mathcal{P}^*)$ over the polar dual of $\mathcal{P}$ and then performing stellar subdivisions
at all proper faces not contained in the base, in decreasing order of dimension.
\end{theorem}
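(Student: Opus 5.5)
The plan is to compute the face lattice of $\DStell(\mathcal{P}^*)$ explicitly and match it with $\Stell(P)$ through polar duality. Write $\mathcal{Q}=\mathcal{P}^*$ and let $v$ be the apex of $\operatorname{Pyr}(\mathcal{Q})$. I will use the standard duality: faces $G$ of $\mathcal{P}$ (including $\emptyset$ and $\mathcal{P}$) correspond inclusion-reversingly to faces $G^\diamond$ of $\mathcal{Q}$.

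The proper faces of the pyramid are of two kinds:
\begin{itemize}
\item base faces $H\subseteq \mathcal{Q}$, including $\emptyset$ and $\mathcal{Q}$;
\item non-base faces $\operatorname{Pyr}(H)$ for $H\subsetneq\mathcal{Q}$, where $\operatorname{Pyr}(\emptyset)=\{v\}$.
\end{itemize}
Hence the faces to be subdivided are indexed by proper faces $H$ of $\mathcal{Q}$, equivalently by faces $c=H^\diamond\neq\zero$ of $P$. The apex corresponds to $c=\one_P$, and subdividing at a vertex changes nothing.

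The first step is geometric. A stellar subdivision at a face $F$ is realized by placing a new vertex $w_F$ beyond $F$ and beneath all other facets, then taking the convex hull. This produces a convex polytope whose boundary complex is the stellar subdivision of the old one. Iterating this in decreasing order of dimension keeps everything convex, so $\DStell(\mathcal{Q})$ is a polytope. Its face poset is the boundary complex together with a top element, which will play the role of $\one_{\Stell}$.

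The second step is the combinatorial heart, and I expect it to be the main obstacle. I will prove by induction on the number of subdivisions the following lemma. Let $F_1,\dots,F_m$ be faces of a polytopal complex, subdivided in weakly decreasing order of dimension. Then the faces of the result are exactly the joins $w_{F_{i_1}}*\cdots*w_{F_{i_k}}*\sigma$, where
\begin{itemize}
\item $F_{i_1}\supsetneq\cdots\supsetneq F_{i_k}$ is a chain of subdivided faces;
\item $\sigma$ is an original face with $\sigma\subseteq F_{i_k}$ (any face if $k=0$) that contains no subdivided face.
\end{itemize}
For the inductive step, subdividing at $F$ replaces each face $G\supseteq F$ by the joins $w_F*\tau$ with $\tau\subseteq\partial F * \operatorname{lk}$. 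Because of the dimension ordering, every face currently containing $F$ is an original face; I must check carefully that the new joins again fit the stated description.

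In our setting every non-base face is subdivided, and no base face contains a non-base face. So $\sigma$ ranges exactly over base faces $H$ of $\mathcal{Q}$ with $H\subseteq \operatorname{Pyr}(H_k)\cap\mathcal{Q}=H_k$. Here $k=0$ allows all faces of $\mathcal{Q}$, including $\mathcal{Q}$ itself, which is a facet of the boundary.

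The final step is to dualize. A face of $\DStell(\mathcal{Q})$ is then a chain $\mathcal{Q}\supsetneq H_1\supsetneq\cdots\supsetneq H_k$ together with a face $H\subseteq H_k$. Dualizing gives a chain $\zero=c_0<c_1<\cdots<c_k$ in $P$ with $c_i=H_i^\diamond$, and an element $y=H^\diamond\ge c_k$. This is precisely an element $(c,y)$ of $\Stell(P)$. For example, the empty face corresponds to $(\{\zero\},\one_P)$, the base $\mathcal{Q}$ corresponds to $(\{\zero\},\zero)$, and the whole polytope is sent to $\one_{\Stell}$.

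Inclusion of joins holds exactly when the vertex set $\{w_{F}\}$ grows and $\sigma$ grows, that is, when $c\subseteq c'$ and $H\subseteq H'$. By duality the second condition is $y\ge y'$, which is the order on $\Stell(P)$. Bijectivity follows from the lemma, so this yields the isomorphism $\Stell(P)\cong\mathscr{F}(\DStell(\mathcal{P}^*))$.
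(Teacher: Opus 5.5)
\paragraph{Overall assessment.} Your overall strategy is sound, and your last two steps (specializing to the pyramid, then polar-dualizing and comparing orders) are fine. This is essentially how the paper proceeds: Theorem~\ref{thm:face_lattice_St(P)} labels the faces of $\DStell(\mathcal{P})$ as $\operatorname{conv}(F,v_{F_1},\dots,v_{F_m})$, and the corollary then dualizes. The gap is in the lemma you call the combinatorial heart.

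\paragraph{The lemma as stated is false.} You state it for an arbitrary family of faces subdivided in decreasing order of dimension, and in that generality it fails. Subdivide only the edge $ab$ of the boundary of a tetrahedron $abcd$. The new vertex $w$ is joined to the whole link of $ab$, so $w\ast c$ and $w\ast ac$ are faces. Neither has the form $w\ast\sigma$ with $\sigma\subseteq ab$. Your lemma keeps only those $\tau\in\partial F\ast\operatorname{lk}(F)$ that lie in $\partial F$.

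\paragraph{The justification for the inductive step is also false.} You claim that every face currently containing $F$ is an original face. This fails precisely in the situation you need. Take the pyramid over a square with apex $v$. By the time the edge $va$ is subdivided, the faces containing it are $va$, $w_{vab}\ast va$ and $w_{vda}\ast va$, and the last two are not original.

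\paragraph{How to repair it.} The missing ingredient is the hypothesis that the family of subdivided faces is upward closed among proper faces. This holds here, since in $\operatorname{Pyr}(\mathcal{Q})$ every proper face containing a non-base face is non-base. You also need the correct description of the star of $F$ at the moment it is subdivided:
\begin{enumerate}
\item Under upward closure, any original face $\sigma\supsetneq F$ has higher dimension and so was already subdivided.
\item Hence, by induction, the faces containing $F$ are exactly $w_{G_1}\ast\cdots\ast w_{G_j}\ast F$, where $G_1\supsetneq\cdots\supsetneq G_j\supsetneq F$ were previously subdivided.
\item So the link of $F$ consists only of new vertices forming chains above $F$.
\item The new faces are therefore $w_{G_1}\ast\cdots\ast w_{G_j}\ast w_F\ast\tau$ with $\tau\subsetneq F$. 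This is exactly what produces both the chain condition and the constraint $\sigma\subseteq F_{i_k}$.
\end{enumerate}
Initial segments of a decreasing-dimension order on an upward-closed family are again upward closed, so the induction closes. You also need that $F$ itself survives until its turn, which the dimension ordering guarantees.

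\paragraph{Comparison with the paper.} With this repair your argument becomes a clean, uniform version of the paper's more ad hoc proof. The paper treats faces disjoint from the base by comparison with the barycentric subdivision. It treats faces meeting the base by showing they avoid the apex and that the $v_{G_i}$ they contain come from a chain, and then separately constructs each such face.
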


\begin{figure}[ht]
    \centering

    \begin{tikzpicture}[scale = 0.75,
        line join=round,
        line cap=round,
        diagram arrow/.style={
            -{Stealth[length=2.2mm]},
            thick
        },
        object label/.style={font=\small}
    ]

    \tdplotsetmaincoords{77}{160}

    \begin{scope}[
        shift={(3.2cm,5.5cm)},
        tdplot_main_coords,
        scale=0.92
    ]
            \def\xyfactor{0.8}

        % Base vertices
        \coordinate (DA) at
            ({-4*\xyfactor},{-2*\xyfactor},0);
        \coordinate (DB) at
            ({ 4*\xyfactor},{-2*\xyfactor},0);
        \coordinate (DC) at
            (0,{4*\xyfactor},0);

        % Intermediate vertices
        \coordinate (DD) at
            ({-16*\xyfactor/7},{-8*\xyfactor/7},{75/28});
        \coordinate (DE) at
            ({-2*\xyfactor},{\xyfactor},{15/8});
        \coordinate (DF) at
            (0,{-2*\xyfactor},{15/8});
        \coordinate (DH) at
            (0,{16*\xyfactor/7},{75/28});
        \coordinate (DI) at
            ({2*\xyfactor},{\xyfactor},{15/8});
        \coordinate (DJ) at
            ({16*\xyfactor/7},{-8*\xyfactor/7},{75/28});

        % Apex
        \coordinate (DG) at (0,0,5);

        % Hidden edges
        \draw[dotted,very thin] (DA)--(DB);

        % Visible edges
        \draw[very thick] (DB)--(DC)--(DA);

        \draw[very thick] (DA)--(DG)--(DB);
        \draw[very thick] (DC)--(DG);
    \end{scope}

    % ========================================================
    % (1) The triangle Delta_2
    % ========================================================

    \begin{scope}[shift={(10.2cm,6.80cm)}, scale=1.45]
        \coordinate (A) at (-1.35,-0.75);
        \coordinate (B) at ( 1.35,-0.75);
        \coordinate (C) at ( 0, 1.50);

        \draw[very thick] (A)--(B)--(C)--cycle;
    \end{scope}

    % ========================================================
    % (3) DStell(Delta_2)
    % ========================================================

    \tdplotsetmaincoords{77}{160}

    \begin{scope}[
        shift={(3.2cm,-2.45cm)},
        tdplot_main_coords,
        scale=0.92
    ]
        \def\xyfactor{0.8}

        % Base vertices
        \coordinate (DA) at
            ({-4*\xyfactor},{-2*\xyfactor},0);
        \coordinate (DB) at
            ({ 4*\xyfactor},{-2*\xyfactor},0);
        \coordinate (DC) at
            (0,{4*\xyfactor},0);

        % Intermediate vertices
        \coordinate (DD) at
            ({-16*\xyfactor/7},{-8*\xyfactor/7},{75/28});
        \coordinate (DE) at
            ({-2*\xyfactor},{\xyfactor},{15/8});
        \coordinate (DF) at
            (0,{-2*\xyfactor},{15/8});
        \coordinate (DH) at
            (0,{16*\xyfactor/7},{75/28});
        \coordinate (DI) at
            ({2*\xyfactor},{\xyfactor},{15/8});
        \coordinate (DJ) at
            ({16*\xyfactor/7},{-8*\xyfactor/7},{75/28});

        % Apex
        \coordinate (DG) at (0,0,5);

        % Hidden edges
        \draw[dotted,very thin] (DA)--(DB);
        \draw[dotted,very thin] (DD)--(DF);
        \draw[dotted,very thin] (DB)--(DF)--(DJ);
        \draw[dotted,very thin] (DA)--(DF)--(DG);

        % Visible edges
        \draw[very thick] (DB)--(DC)--(DI)--cycle;
        \draw[very thick] (DB)--(DJ);
        \draw[very thick] (DC)--(DH)--(DI);
        \draw[very thick] (DG)--(DJ);
        \draw[very thick] (DI)--(DJ);
        \draw[very thick] (DH)--(DG)--(DI);
        \draw[very thick] (DC)--(DE)--(DG);
        \draw[very thick] (DE)--(DH);
        \draw[very thick] (DA)--(DC);
        \draw[very thick] (DA)--(DD)--(DE);
        \draw[very thick] (DA)--(DE);
        \draw[very thick] (DD)--(DG);
    \end{scope}

    % ========================================================
    % (4) The three-dimensional stellahedron
    % ========================================================

    \tdplotsetmaincoords{77}{110}

    \begin{scope}[
        shift={(10.2cm,-1.25cm)},
        tdplot_main_coords,
        scale=5.8
    ]
        \coordinate (SA) at ({-1/4},{-1/4},{-1/4});
        \coordinate (SB) at ({-1/4},{-1/4},{ 1/2});
        \coordinate (SC) at ({-1/4},{-1/16},{1/2});
        \coordinate (SD) at ({-1/4},{ 1/2},{-1/4});
        \coordinate (SE) at ({-1/4},{ 1/2},{-1/16});
        \coordinate (SF) at ({-3/16},{-1/16},{1/2});
        \coordinate (SG) at ({-3/16},{ 1/2},{-1/16});
        \coordinate (SH) at ({-1/16},{-1/4},{1/2});
        \coordinate (SI) at ({-1/16},{-3/16},{1/2});
        \coordinate (SJ) at ({-1/16},{ 1/2},{-1/4});
        \coordinate (SK) at ({-1/16},{ 1/2},{-3/16});
        \coordinate (SL) at ({ 1/2},{-1/4},{-1/4});
        \coordinate (SM) at ({ 1/2},{-1/4},{-1/16});
        \coordinate (SN) at ({ 1/2},{-3/16},{-1/16});
        \coordinate (SO) at ({ 1/2},{-1/16},{-1/4});
        \coordinate (SP) at ({ 1/2},{-1/16},{-3/16});

        % Hidden edges
        \draw[dotted,very thin] (SA)--(SB);
        \draw[dotted,very thin] (SA)--(SD);
        \draw[dotted,very thin] (SA)--(SL);

        % Visible outer edges
        \draw[very thick]
            (SB)--(SC)--(SE)--(SD)--(SJ)--
            (SO)--(SL)--(SM)--(SH)--cycle;

        % Remaining visible edges
        \draw[very thick] (SC)--(SF)--(SI)--(SH);
        \draw[very thick] (SE)--(SG)--(SF);
        \draw[very thick] (SG)--(SK)--(SJ);
        \draw[very thick] (SI)--(SN)--(SM);
        \draw[very thick] (SK)--(SP)--(SO);
        \draw[very thick] (SN)--(SP);
    \end{scope}

    % ========================================================
    % Arrows: 1 -> 2 -> 3 -> 4 and 1 -> 4
    % ========================================================

    % 1 -> 2
    \draw[diagram arrow]
        (7.75,7)
        -- node[above,font=\small]{$\operatorname{Pyr}(\mathcal{P}^*)$}
        (5.85,7);

    % 2 -> 3
    \draw[diagram arrow]
        (3.2,4.4)
        -- node[left,font=\large]{${\substack{\text{stellar}\\\text{subdivisions}}}$}
        (3.2,2.6);

    % 3 -> 4
    \draw[diagram arrow,<->]
        (5.85,-0.5)
        -- node[above,font=\small]{polar dual}
        (7.75,-0.5);

    % 1 -> 4
    \draw[dashed,thick, ->]
        (9.3,5.2)-- node[left,font=\small]{$\DStell(\mathcal{P}^*)\enspace$}(4.7,0.9);

    \end{tikzpicture}

    \caption{Construction, up to polar duality, of the $3$-dimensional stellahedron from a $2$-simplex.}
    \label{fig:DStell-Delta2-construction}
\end{figure}

The reason for choosing the name ``stellahedral transform'' becomes transparent from the case of a simplex. Up to polar duality, the procedure described in Theorem~\ref{thm:intro-polytopal} sends simplices of dimension $r$ to stellahedra of dimension $r+1$. Figure~\ref{fig:DStell-Delta2-construction} depicts the construction when $r=2$; here we use that the polar dual of a simplex is again a simplex.

One of our main algebraic motivations for the definition of the stellahedral transform comes from Chow theory for Eulerian posets. The Hilbert--Poincar\'e series of matroid Chow rings, augmented Chow rings, and intersection cohomologies have been central objects of study within the last decade (see \cite{elias-proudfoot-wakefield,proudfoot-xu-young,ferroni-matherne-stevens-vecchi,ferroni-matherne-vecchi} and the references therein). In the framework of Kazhdan--Lusztig--Stanley theory, these invariants arise from incidence-algebra constructions associated with the characteristic kernel of a matroid (see \cite{proudfoot-kls,ferroni-matherne-vecchi}). Replacing the characteristic kernel by the Eulerian kernel gives analogous Chow, augmented Chow, and $Z$-polynomials for arbitrary Eulerian posets.

For two of these invariants, a geometric or combinatorial interpretation is already known. The Chow polynomial of an Eulerian poset is the $h$-polynomial of the order complex of its proper part; see \cite[Theorem~5.4]{ferroni-matherne-vecchi}. More recently, Ferroni and Riccardi \cite{ferroni-riccardi}, answering a question of Proudfoot \cite{proudfoot-kls}, showed that the $Z$-polynomial of an Eulerian poset $P$ is the toric $h$-polynomial of the poset of intervals of $P$. Our main result supplies the corresponding interpretation for augmented Chow polynomials.

\begin{theorem}\label{thm:augChow_is_toric-h_of_Stell}
Let $P$ be an Eulerian poset. The right augmented Chow polynomial of $P$ equals the toric $h$-polynomial of the stellahedral transform $\Stell(P)$.
\end{theorem}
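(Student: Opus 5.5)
We compare the two polynomials through their defining recursions. Recall the toric $h$-polynomial of a bounded graded poset $Q$ of rank $n$. It is defined recursively by $g(Q)$ being the truncation of $(1-x)h(Q)$ below degree $\lfloor n/2\rfloor$ (suitably), and
\[
h(Q)=\sum_{\zero\le q<\one} g([\zero,q])\,(x-1)^{n-1-\rho(q)}.
\]
The right augmented Chow polynomial $H_P$ is characterized in the Kazhdan--Lusztig--Stanley framework with the Eulerian kernel $\chi_{[a,b]}=(x-1)^{\rho(a,b)}$. The relevant convolution is
\[
H_P(x)=\sum_{p\in P} \underline{H}_{[\zero,p]}(x)\,\chi\text{-type factor},
\]
in which the ordinary Chow polynomials $\underline{H}$ of lower intervals appear. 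One has $\underline{H}_{[\zero,p]}=h(\Delta(\text{proper part}))$ by \cite[Theorem~5.4]{ferroni-matherne-vecchi}. The plan is to show that the toric recursion for $\Stell(P)$ unpacks into exactly this expression.

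\textbf{Step 1: lower intervals of $\Stell(P)$.} We describe $[\zero_{\Stell},(c,y)]$. It should decompose as a product of a Boolean lattice, coming from subsets of $c\setminus\{\zero\}$, and the dual interval $[y,\one]^{*}$. The minimum of $\Stell(P)$ is $(\{\zero\},\one)$, with rank $\rho(c,y)=|c|-1+r-\rho(y)$. Hence
\[
[\zero_{\Stell},(c,y)]\cong B_{|c|-1}\times [y,\one]^{*}.
\]
Toric $g$ is multiplicative on products in the Eulerian setting, and Boolean lattices have $g=1$. So $g([\zero,(c,y)])=g([y,\one]^*)$, which is the dual toric $g$ of the upper interval $[y,\one]$. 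We will verify this product structure carefully; one must check that the chain conditions $c_k\le y$ are compatible.

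\textbf{Step 2: substitute and sum over chains.} The formula becomes
\[
h(\Stell(P))=\sum_{y\in P} g([y,\one]^*)\sum_{c:\,\max c\le y}(x-1)^{r-(|c|-1)-(r-\rho(y))}.
\]
For fixed $y$, the inner sum over chains $\zero=c_0<\dots<c_k\le y$ weighted by $(x-1)^{\rho(y)-k}$ is a chain-counting polynomial. Splitting according to whether $c_k=y$ relates it to the $h$-polynomial of the order complex of the proper part of $[\zero,y]$, up to a factor $x$ or $1$. That is, it becomes a combination of $\underline{H}_{[\zero,y]}$ and related terms. Using Eulerianity, the identity $g([y,\one]^*)$ with Kazhdan--Lusztig-type duality (Stanley's $g$ of the dual is the inverse kernel, giving the right KLS function of $P$ for the Eulerian kernel) matches the known expansion $H_P=\sum_{y}\underline{H}_{[\zero,y]}\cdot(\text{right KL polynomial of }[y,\one])$.

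\textbf{Main obstacle.} The hardest part is Step 2's final identification. We must recognize $g([y,\one]^*)$ as the correct right Kazhdan--Lusztig--Stanley polynomial for the Eulerian kernel, so that the sum is literally the defining convolution of the right augmented Chow polynomial. Stanley showed that toric $g$ of duals solves the KLS equation for Eulerian posets, so we would invoke that result. We would then carefully match the powers of $x$ versus $x-1$, using the chain-sum identity $\sum_{c}(x-1)^{\rho(y)-|c|+1}$, first checking it in the Boolean case, where the result must be the stellahedron $h$-vector. A secondary check is the truncation: because the top element $\one_{\Stell}$ is excluded, no $g$ of the full poset appears. The sum is thus a genuine $h$ and not a $g$, which is consistent with $H_P$ being palindromic of degree $r$.
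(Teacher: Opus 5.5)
Your architecture matches the paper's. You expand $h_{\Stell(P)}$ over the elements below $\one_{\Stell}$; the paper gets the same expansion from $\tilde g^{\rev}=\tilde g\cdot\tilde\varepsilon$, which is Stanley's recursion. You then replace the lower toric $g$-polynomials by $f_{y\one}$ and split the chains according to whether they contain $y$, arriving at
\[
h_{\Stell(P)}(x)=f_{\zero\one}(x)+\sum_{y>\zero}f_{y\one}(x)\,x\,\H_{\zero y}(x).
\]
Your Step~1 is a valid and different justification of the middle step:
\begin{itemize}
\item The isomorphism $[\zero_{\Stell},(c,y)]\cong B_{|c|-1}\times[y,\one]^*$ is correct. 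For $c'\subseteq c$ and $y'\ge y$ one automatically has $\max c'\le\max c\le y\le y'$.
\item Toric $g$ is multiplicative on Cartesian products of Eulerian posets, because the tensor product of the two left KLS functions satisfies the three axioms for the product kernel.
\item Finally, $g([y,\one]^*)=f_{y\one}$.
\end{itemize}
The paper instead checks the KLS axioms for $z_{(c,y),(c',y')}=f_{y'y}$ directly. Its verification of $z^{\rev}=z\cdot\tilde\varepsilon$ is exactly the factorization of the convolution over $[c,c']\times[y',y]^*$, i.e.\ your product argument done by hand. It covers all intervals below the top, whereas you only need the lower ones.

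The gap is in the last step. The expansion you intend to match, $F_P=\sum_y\H_{\zero y}f_{y\one}$, is false: for $P$ of rank $1$ it equals $2$, while $F_P=1+x=h_{\Stell(P)}$. Your own chain computation produces $x\,\H_{\zero y}$ rather than $\H_{\zero y}$ for every $y>\zero$. As written the two sides disagree, and ``carefully matching powers of $x$'' does not say what absorbs that extra factor.

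The missing ingredients are two identities from \cite{ferroni-matherne-vecchi}:
\begin{itemize}
\item $\H\cdot\varepsilon=\H^{\rev}$;
\item the palindromicity $\H^{\rev}_{st}=x\,\H_{st}$ for $s<t$, together with $\H^{\rev}_{ss}=1$.
\end{itemize}
Combining these with the definition $F=\H\cdot f^{\rev}$ and the right KLS relation $f^{\rev}=\varepsilon\cdot f$ gives
\[
F_P=(\H\cdot\varepsilon\cdot f)_{\zero\one}=(\H^{\rev}\cdot f)_{\zero\one}=f_{\zero\one}+\sum_{y>\zero}x\,\H_{\zero y}\,f_{y\one}.
\]
This is exactly your chain sum, once you use $\H_{\zero y}=h_{\Delta(\zero,y)}$. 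With these identities inserted, your proof is complete and coincides with the paper's.
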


The natural skew-symmetry of the Eulerian kernel implies, dually, that the left augmented Chow polynomial of $P$ is the toric $h$-polynomial of $\Stell(P^*)$. Thus the stellahedral transform provides a geometric model for both left and right augmented Chow polynomials in the Eulerian setting. In particular, Theorem~\ref{thm:augChow_is_toric-h_of_Stell} answers, in a certain sense, the question raised by Ferroni, Matherne, and Vecchi in \cite[Question~5.9]{ferroni-matherne-vecchi}.

We also study the ordinary Eulerian Chow polynomial under the stellahedral transform. We show that the Chow polynomial of $\Stell(P)$ is determined entirely by the Chow polynomial of $P$, and give an explicit formula in terms of the zeta polynomial of $P$.

Combining Theorems~\ref{thm:augChow_is_toric-h_of_Stell} and~\ref{thm:intro-polytopal} with the K\"ahler package for the intersection cohomology of polytopes \cite{karu04} immediately implies that, when $P$ is the face poset of a polytope, its right and left augmented Chow polynomials have nonnegative and unimodal coefficients. We establish two complementary strengthenings of this statement: one enlarges the class of posets, while the other strengthens the positivity property.

\begin{theorem}
Let $P$ be a Gorenstein* poset. The right and left augmented Chow polynomials of $P$ have nonnegative and unimodal coefficients.
\end{theorem}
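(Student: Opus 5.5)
We will deduce the statement from Theorem~\ref{thm:augChow_is_toric-h_of_Stell} together with Theorem~\ref{thm:intro-preservation} and known positivity of the $\mathbf{cd}$-index. Let $P$ be Gorenstein* of rank $r$. By Theorem~\ref{thm:intro-preservation}, $\Stell(P)$ is again Gorenstein* (Eulerian and Cohen--Macaulay, with the right rank). By Theorem~\ref{thm:augChow_is_toric-h_of_Stell}, the right augmented Chow polynomial of $P$ equals the toric $h$-polynomial $h(\Stell(P),x)$. For the left augmented Chow polynomial we use the skew-symmetry remark following that theorem: it equals $h(\Stell(P^*),x)$. Since the dual of a Gorenstein* poset is Gorenstein*, the same argument applies. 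It therefore suffices to prove that the toric $h$-polynomial of any Gorenstein* poset $Q$ has nonnegative, unimodal coefficients.

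For this step we would invoke the $\mathbf{cd}$-index machinery. By Bayer--Klapper, the toric $h$-polynomial of an Eulerian poset $Q$ is a linear function of its $\mathbf{cd}$-index $\Phi_Q=\sum_w [w]_Q\, w$. Ehrenborg (2005) gives an explicit expression $h(Q,x)=\sum_w [w]_Q\, h_w(x)$ in which each $h_w(x)$ is a polynomial with nonnegative coefficients. Moreover, each $h_w$ is symmetric about the common center $\tfrac{\operatorname{rk}Q-1}{2}$ and is unimodal. Karu (2006) proved that the coefficients $[w]_Q$ are nonnegative for every Gorenstein* poset $Q$. A nonnegative combination of nonnegative, symmetric, unimodal polynomials with the same center of symmetry is again nonnegative, symmetric and unimodal. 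This gives the result for $Q=\Stell(P)$ and $Q=\Stell(P^*)$.

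The main obstacle is the unimodality of the individual $h_w(x)$ with a common center. Nonnegativity of the $\mathbf{cd}$-coefficients alone gives only nonnegativity of $h$ unless each basis contribution is unimodal. We would first extract Ehrenborg's formula carefully; the contributions are essentially products of shifted ballot/Catalan-type polynomials of the form $(1-x)^k$ times truncations. We would then check directly that each is symmetric and unimodal, for instance by exhibiting it as a sum of "symmetric intervals" $x^i+\dots+x^{d-i}$.

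If this direct check fails, we would use the toric $h$-vector of the Gorenstein* poset itself as the base case. For a polytope this is unimodal by Karu's hard Lefschetz theorem. We would reduce the general case to polytopal $\mathbf{cd}$-monomials, since each $\mathbf{cd}$-monomial $w$ arises as the $\mathbf{cd}$-index contribution of a suitable iterated pyramid/prism-type polytope. A last check is that the rank bookkeeping matches: $\Stell(P)$ has rank $r+1$, and the augmented Chow polynomial has degree $r$ with the same symmetry center.
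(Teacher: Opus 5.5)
\paragraph{The gap.} Your reduction fails: it is not true that the toric $h$-polynomial of every Gorenstein* poset is unimodal. So no argument that uses only ``$\Stell(P)$ is Gorenstein*'' can succeed.

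The smallest counterexample is the rank-$3$ butterfly, i.e.\ the face poset of a digon (a regular CW $1$-sphere with two vertices and two edges). It is Gorenstein*, its $\mathbf{cd}$-index is $\mathbf{c}^2$, and its toric $h$-polynomial is $1+0\cdot x+x^2$. An $m$-gon gives $1+(m-2)x+x^2$; equivalently $g_1=-1$ here, matching the functional $g^2_1=\mathbf{d}-\mathbf{c}^2$ recorded in the paper.

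Consequently the component $h_{\mathbf{c}^2}(x)=1+x^2$ in your expansion $h(Q,x)=\sum_w[w]_Q\,h_w(x)$ is nonnegative and symmetric but not unimodal. The ``direct check'' you plan therefore fails already in degree $2$. Because the toric $g$-functionals have negative $\mathbf{cd}$-coefficients, nonnegativity of $\Phi_Q$ alone does not imply unimodality of the toric $h$-polynomial.

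Your fallback does not help either. A single monomial such as $\mathbf{c}^n$ is not the $\mathbf{cd}$-index of any polytope. Since the toric-$h$ functional is already non-unimodal on such monomials, no decomposition into polytopal pieces can carry Karu's hard Lefschetz theorem over to all Gorenstein* posets. Your route does work when $P$ is the face poset of a polytope: then $\Stell(P)$ is polytopal and Karu (2004) applies, which is the remark made in the introduction.

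\paragraph{The missing idea.} Use the $\mathbf{cd}$-index of $P$ itself rather than that of $\Stell(P)$. The paper writes $F_P=F(\Phi_P,x)$ for the linear map $F(w,x)=f(w,x)+x\H(w,x)+x\sum_w\H(w_{(1)},x)f(w_{(2)},x)$. This comes from $F=\H^{\rev}\cdot f$, the identity $\H^{\rev}=x\H+(1-x)\delta$, and the Ehrenborg--Readdy coproduct.

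The Bayer--Ehrenborg recursions for $f(\mathbf{c}w,x)$ and $f(\mathbf{d}w,x)$ then give $F(\mathbf{c}w,x)=(1+x)F(w,x)+f_{\lfloor (n+1)/2\rfloor}(w)\,x^{\lfloor (n+3)/2\rfloor}$ and $F(\mathbf{d}w,x)=2xF(w,x)+f_{\lfloor (n+1)/2\rfloor}(w)\,x^{\lfloor (n+3)/2\rfloor}(1+x)$. The correction terms can be negative, so the induction on $n$ uses the strengthened hypothesis $F_i(w)-F_{i-1}(w)\geq|f_i(w)|$ for $0\leq i\leq\lfloor(n+2)/2\rfloor$. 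This shows that every component $F(w,x)$ is nonnegative, symmetric about $(n+1)/2$, and unimodal.

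Only then do Karu's nonnegativity theorem for $\Phi_P$ and your (correct) remark about nonnegative combinations of symmetric unimodal polynomials with a common center finish the proof. The left case follows by applying the same argument to $P^*$.
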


The proof uses Karu's nonnegativity theorem for the $\mathbf{cd}$-index of Gorenstein* posets \cite{karu}, together with explicit universal $\mathbf{cd}$-components for augmented Chow polynomials. This conclusion cannot in general be strengthened to $\gamma$-positivity or real-rootedness: the augmented Chow polynomials of a small Gorenstein* poset already provide a counterexample. For face posets of polytopes, however, the stronger inequalities satisfied by polytopal $\mathbf{cd}$-indices yield the following result.

\begin{theorem}
Let $P$ be the face poset of a polytope. The right and left augmented Chow polynomials of $P$ are $\gamma$-positive.
\end{theorem}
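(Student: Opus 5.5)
The plan is to write the augmented Chow polynomial as a nonnegative combination of \emph{gamma-positive} universal polynomials, weighted by $\mathbf{cd}$-coefficients. It takes three steps.

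\textbf{Step 1: a $\mathbf{cd}$-expansion.} Let $P$ be the face poset of a polytope, of rank $r$. By Theorem~\ref{thm:augChow_is_toric-h_of_Stell}, the right augmented Chow polynomial of $P$ is the toric $h$-polynomial of $\Stell(P)$. The toric $h$-polynomial of an Eulerian poset is a linear function of its flag $f$-vector. The flag $f$-vector of $\Stell(P)$ is in turn determined linearly by the flag $f$-vector of $P$, since chains in $\Stell(P)$ are counted by chain data of $P$. Hence the right augmented Chow polynomial is a linear function of the $\mathbf{cd}$-index $\Phi_P=\sum_w [w]_P\, w$. We therefore write
\[
\overline{H}_P(x)=\sum_{w} [w]_P\, \overline{H}_w(x),
\]
where the sum runs over $\mathbf{cd}$-monomials $w$ of degree $r-1$. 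The universal polynomials $\overline{H}_w(x)$ do not depend on $P$; these are the explicit universal $\mathbf{cd}$-components mentioned above. The left augmented Chow polynomial follows from the same argument applied to $P^*$, which is again a polytope face poset, namely that of the polar dual.

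\textbf{Step 2: gamma-positivity of the components.} Each $\overline{H}_w$ is palindromic, being a toric $h$-contribution of an Eulerian object. We would not try to prove that every $\overline{H}_w$ is $\gamma$-positive; this is presumably false, since Karu's nonnegativity only yields unimodality for Gorenstein* posets and small counterexamples exist there. Instead we use Ehrenborg's 2005 result: the $\mathbf{cd}$-index of a polytope is coefficientwise bounded below by a nonnegative combination of the $\mathbf{cd}$-indices of lifted and pyramid operations, in the form ``$\Phi_P - \Phi$ of a simpler structure'' being nonnegative. Concretely, we would use Ehrenborg's lifting inequality: for polytopes, $[u\mathbf{d}v]\ge[u\mathbf{c}^2v]$-type relations hold in a suitable lifted form. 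This lets us rewrite $\overline{H}_P$ as a nonnegative combination of the differences
\[
\overline{H}_{u\mathbf{d}v}-\overline{H}_{u\mathbf{c}^2v},
\]
together with the terms $\overline{H}_{\mathbf{c}^{r-1}}$-like base terms. Equivalently, we expand in the basis adapted to Ehrenborg's cone of polytopal $\mathbf{cd}$-indices, whose extreme rays have nonnegative coefficients.

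\textbf{Step 3: the main obstacle.} The hard part is computing $\overline{H}_w$ explicitly and showing that the relevant combinations are $\gamma$-positive. For this we would find a recursion for $\overline{H}_w$ in terms of the last letter of $w$: appending $\mathbf{c}$ or $\mathbf{d}$ corresponds to pyramid-like or suspension-like operations. Palindromic polynomials behave well under multiplication by $(1+x)$ and $x$, which preserve $\gamma$-positivity with the appropriate center. The first attempt would be to show that each extreme-ray combination factors as $x^a(1+x)^b$ times an Eulerian-type $\gamma$-positive polynomial, for instance a Chow polynomial of a Boolean lattice, which is known to be $\gamma$-positive. This would be checked by induction on the length of $w$.
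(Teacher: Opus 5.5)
\textbf{Verdict: genuine gap.} Your Step 1 is fine. It is also where the paper starts: $F_P(x)=F(\Phi_P,x)$ for a universal linear map $F$ on $\mathbf{cd}$-polynomials. The paper obtains this from $F=\mathrm{H}^{\mathrm{rev}}\cdot f$ and the Ehrenborg--Readdy coproduct rather than from flag vectors of $\Stell(P)$, but either justification works. You are also right that the components are not individually $\gamma$-positive: $F(\mathbf{c}^3,x)$ has $\gamma$-polynomial $1+t-t^2$.

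Steps 2--3, however, contain no argument. There is no known extreme-ray description of the cone of polytopal $\mathbf{cd}$-indices, so ``expanding in the basis adapted to Ehrenborg's cone'' is not available. The dualization is also inverted. An inequality $[u\mathbf{d}v]_P\ge[u\mathbf{c}^2v]_P$ lets you attach the nonnegative weight $[u\mathbf{d}v]_P-[u\mathbf{c}^2v]_P$ to $F(u\mathbf{d}v,x)$, and the weight $[u\mathbf{c}^2v]_P$ to the \emph{sum} $F(u\mathbf{d}v,x)+F(u\mathbf{c}^2v,x)$; differences of components never appear. The workable formulation is the dual one. For each $q$ you must show that the functional $L_{n,q}=\sum_w [t^q]\gamma_F(w,t)\,w$ is a nonnegative combination of functionals known to be nonnegative on polytopes. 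That requires two things you do not supply: an explicit formula for $\gamma_F(w,t)$, which you defer as ``the main obstacle'', and the right family of inequalities.

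The paper supplies both. From the Bayer--Ehrenborg recursions for $f(\mathbf{c}w,x)$ and $f(\mathbf{d}w,x)$ it derives recursions $F(\mathbf{c}w)=(1+x)F(w)+\cdots$ and $F(\mathbf{d}w)=2xF(w)+\cdots$. These give a closed formula for $\gamma_F(\mathbf{c}^{k_0}\mathbf{d}\cdots\mathbf{d}\mathbf{c}^{k_r},t)$ in terms of signed Catalan numbers. Those signed Catalan numbers are exactly the coefficients of the toric-$g$ functionals $g_j^{2j}$. This is what lets $L_{n,q}$ be regrouped as $2^\ell$-weighted sums of three kinds of terms: $u(a)\mathbf{c}^{m}g_j^{2j}$, $u(a)\mathbf{c}^{m}\mathbf{d}\,g_j^{2j}$, and plain monomials. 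Nonnegativity then follows from Ehrenborg's lifted toric-$g$ inequalities $\langle u\,g_j^{2j},\Phi_P\rangle\ge 0$, which rest on nonnegativity of toric $g$-vectors of polytopes, together with Karu's theorem for the monomials. The relation $[u\mathbf{d}]\ge[u\mathbf{c}^2]$ is only the case $j=1$.

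The relations you invoke do not imply these inequalities. Already toric $g_2\ge 0$ for $4$-polytopes, $2-[\mathbf{c}^2\mathbf{d}]-[\mathbf{d}\mathbf{c}^2]+[\mathbf{d}^2]\ge0$, does not follow from all relations $[u\mathbf{d}v]\ge[u\mathbf{c}^2v]$ plus nonnegativity. To see this, take $[\mathbf{c}^4]=1$ and all other coefficients equal to $10$. So even with the components in hand, your route would need a separate proof that every $L_{n,q}$ lies in the cone generated by the $\mathbf{d}\ge\mathbf{c}^2$ relations and monomials, and nothing in your plan addresses this.
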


This theorem can be viewed as a polytopal analogue of the matroid result in \cite[Theorem~3.25]{ferroni-matherne-stevens-vecchi}. We further conjecture that, coefficientwise, the $\gamma$-polynomial is minimized by the simplex.

The final part of the paper records several limitations of these positivity phenomena. We construct Eulerian posets whose Chow and augmented Chow polynomials have negative coefficients, answering a question of Ferroni, Matherne, and Vecchi \cite[Question~5.5]{ferroni-matherne-vecchi}. We also show that the known $\gamma$-positivity of ordinary Chow polynomials of Gorenstein* posets \cite[Theorem~5.7]{ferroni-matherne-vecchi} does not imply log-concavity.

\begin{theorem}
There exists a rank-$9$ Gorenstein* poset whose Eulerian Chow polynomial is not log-concave, and hence is not real-rooted.
\end{theorem}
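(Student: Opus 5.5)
Since the Chow polynomial of an Eulerian poset $P$ is the $h$-polynomial of the order complex of its proper part \cite[Theorem~5.4]{ferroni-matherne-vecchi}, I will look for a rank-$9$ Gorenstein* poset $P$ whose proper part has an order complex $\Delta(\overline{P})$ with non-log-concave $h$-vector. Log-concavity fails as soon as $h_{i}^2<h_{i-1}h_{i+1}$ for some $i$. A non-log-concave polynomial with positive coefficients is not real-rooted by Newton's inequalities, so the last assertion will be automatic. Because $P$ has rank $9$, $\Delta(\overline{P})$ is an $8$-dimensional simplicial complex, and the Chow polynomial has degree $8$. The constraint is that $P$ must be Gorenstein*, so the $h$-vector is symmetric and (by \cite[Theorem~5.7]{ferroni-matherne-vecchi}) $\gamma$-positive. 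The counterexample must therefore be $\gamma$-positive yet fail log-concavity, most plausibly near the middle coefficient $h_4$, or near $h_1,h_2$ where the $\gamma$-vector leaves the most freedom.

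I will follow the technique of Murai and Nevo (2014), as the abstract suggests. Their idea is to start from a Gorenstein* poset or sphere and repeatedly apply a local operation whose effect on the flag/$h$-statistics is controlled and additive, so that a chosen statistic can be pushed in an unbalanced direction. Concretely, I plan to take $P$ to be the face poset of a regular CW-sphere (a Gorenstein* poset) built by gluing many copies of a small piece, for instance by repeated connected sums or by suspending a highly "lopsided" lower-rank sphere. Three facts make this tractable:
\begin{enumerate}[(i)]
\item the flag $h$-vector, hence the Chow polynomial (which is a linear function of the flag $f$-vector via the barycentric subdivision), is additive under connected sum up to a correction term;
\item taking $N$ connected sums makes some flag numbers grow linearly in $N$;
\item log-concavity of the resulting family of $h$-vectors becomes a question about the linear coefficient in $N$.
\end{enumerate}
So the plan is to compute the Chow polynomial as an affine function of $N$, $C(x)=A(x)+N\,B(x)$, and to arrange that $B$ itself violates log-concavity at some index. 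Then, for $N$ large, $C$ does too.

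The key steps, in order, are as follows.
\begin{enumerate}[(1)]
\item Express the Chow polynomial of a Gorenstein* poset through its $\mathbf{cd}$-index, using the universal $\mathbf{cd}$-components that the paper uses for augmented Chow polynomials. The contribution of each $\mathbf{cd}$-monomial is then a fixed palindromic polynomial $\Phi_w(x)$.
\item Since Karu's theorem only forces the $\mathbf{cd}$-coefficients to be nonnegative, look for a single $\mathbf{cd}$-monomial $w$ of degree $8$ (a word in $\mathbf{c},\mathbf{d}$ of total degree $8$, so rank $9$) whose $\Phi_w$ is not log-concave, or such that $\Phi_{\mathbf{c}^8}+N\Phi_w$ fails for large $N$.
\item Realize a Gorenstein* poset whose $\mathbf{cd}$-index has the shape $\text{(small)}+N\cdot w$. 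Murai--Nevo-type constructions (iterated connected sums of suitable spheres, or of a poset obtained by subdividing and doubling) can make one particular $\mathbf{cd}$-coefficient dominate.
\item Finally exhibit the explicit poset and verify the failing inequality numerically.
\end{enumerate}

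The main obstacle is step (3): controlling which $\mathbf{cd}$-coefficient grows. The $\mathbf{cd}$-monomials containing many $\mathbf{d}$'s are easy to inflate, but their $\Phi_w$ may be log-concave. I would first try words like $\mathbf{d}\mathbf{c}^{6}$ or $\mathbf{c}^{3}\mathbf{d}\mathbf{c}^{3}$ and compute $\Phi_w$ to find one that is not log-concave. I would then realize large multiples of it by repeatedly gluing along codimension-one regions, checking that Gorenstein* is preserved.
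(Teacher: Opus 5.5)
Your reduction is fine: the Chow polynomial is the $h$-polynomial of $\Delta(\overline P)$, and Newton's inequalities give the last assertion. One slip: $\Delta(\overline P)$ is $7$-dimensional, not $8$-dimensional. The genuine gap is in steps (2)--(3), and the strategy there cannot work as formulated.

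\textbf{Step (2) fails.} The universal component of a $\mathbf{cd}$-monomial $w$ with $r$ letters $\mathbf{d}$ is $\H(w,x)=w(1+x,2x)=2^rx^r(1+x)^{8-2r}$. This is real-rooted, so no single $w$ is non-log-concave. For your candidates $\mathbf{d}\mathbf{c}^6$ and $\mathbf{c}^3\mathbf{d}\mathbf{c}^3$ you get
\[(1+x)^8+2Nx(1+x)^6=(1+x)^6\bigl(1+(2N+2)x+x^2\bigr),\]
which is real-rooted for every $N$.

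\textbf{The alternative $\mathbf{c}^8+Nw$ is not realizable.} It does fail log-concavity for large $N$ once $r\ge 2$ (at degree $r-1$). But the proper part of a rank-$9$ Eulerian poset has exactly $16+\gamma_1$ elements, where $\gamma_1$ is twice the sum of the coefficients of the one-$\mathbf{d}$ monomials. So a bounded one-$\mathbf{d}$ part bounds the size of $P$, and hence bounds all its $\mathbf{cd}$-coefficients.

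\textbf{Step (3) is circular.} The right growth tool is Murai--Nevo unzipping, not connected sums. Each unzipping adds $\Phi_{[\zero,y]}\mathbf{d}\Phi_{[x,\one]}$, so the increment is $B(x)=2x\,\H_{[\zero,y]}(x)\,\H_{[x,\one]}(x)$. This is a product of lower-rank Chow polynomials, hence log-concave unless one of those intervals is already a counterexample. So ``$A+NB$ with $B$ non-log-concave, then let $N\to\infty$'' presupposes what you want to prove.

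\textbf{The missing idea.} The failure is a tuned, finite interaction, not an asymptotic one. The paper proceeds as follows.
\begin{enumerate}[(1)]
\item Start from the join $R_{20}^{(1)}\ast\cdots\ast R_{20}^{(4)}$. Its $\gamma$-polynomial $(1+36t)^4$ has very large $\gamma_3,\gamma_4$ relative to $\gamma_2$, and its $h$-polynomial $(1+38x+x^2)^4$ is real-rooted.
\item Perform four preliminary unzippings to create a cover $e_1\lessdot v_2'$ for which both $[\zero,e_1]$ and $[v_2',\one]$ have $\Gamma=1$.
\item Unzip that cover $215$ times. Each unzipping adds exactly $2t$, i.e.\ it raises only $\gamma_1$.
\end{enumerate}
This yields $\gamma=(1,582,8208,191808,1679616)$ and $h_2^2-h_1h_3=-175356$.

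Holding $\gamma_2,\gamma_3,\gamma_4$ fixed, one computes $h_2^2-h_1h_3=21\gamma_1^2-125984\gamma_1+66034128$. This is negative only for $581\le\gamma_1\le 5418$. For large $N$ the polynomial is dominated by $2Nx(1+x)^6$ and is log-concave again. So ``take $N$ large'' pushes in the wrong direction. What is needed is a base with a top-heavy $\gamma$-vector plus a moderate, controlled increase of $\gamma_1$.
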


Equivalently, the associated chain polynomial is not real-rooted, giving a negative answer to a question of Athanasiadis and Kalampogia--Evangelinou \cite[Question~5.2]{athanasiadis-kalampogia}. The proper order complex of our example is a flag PL $7$-sphere, so the construction also yields a flag simplicial sphere whose $h$-polynomial is not log-concave.

\subsection*{Outline}

The paper is organized as follows. In Section~\ref{sec:preliminaries} we recall the necessary background on incidence algebras, Kazhdan--Lusztig--Stanley functions, including toric $g$-polynomials and Chow polynomials, Eulerian posets, and Cohen--Macaulay posets. In Section~\ref{sec:Stellahedral_transform} we introduce the stellahedral transform and realize it as a Bier-poset construction through an auxiliary chain lift; this yields the preservation of Eulerianity, Cohen--Macaulayness, and the Gorenstein* property. In Section~\ref{sec:geometric_interpretation} we realize the transform geometrically by stellar subdivisions of a pyramid, prove Theorem~\ref{thm:intro-polytopal}, and derive an explicit formula for the $f$-polynomial of the resulting polytope. In Section~\ref{sec:chow-theory} we compute the relevant KLS function of $\Stell(P)$, prove Theorem~\ref{thm:augChow_is_toric-h_of_Stell}, and determine the Eulerian Chow polynomial of $\Stell(P)$ from that of $P$. In Section~\ref{sec:unimodality-gamma} we use the $\mathbf{cd}$-index to prove unimodality for augmented Chow polynomials of Gorenstein* posets and $\gamma$-positivity in the polytopal case, exhibit the failure of $\gamma$-positivity for general Gorenstein* posets, and formulate a simplex-minimality conjecture. Finally, in Section~\ref{sec:counterexamples} we give counterexamples to coefficientwise nonnegativity for Eulerian Chow and augmented Chow polynomials, and to log-concavity and real-rootedness for Chow polynomials of Gorenstein* posets.

\section{Preliminaries}\label{sec:preliminaries}

\subsection{Incidence algebras and KLS functions}\label{sec:posetnotions}

Our conventions in this paper largely follow those of \cite{ferroni-matherne-vecchi}. In particular, we will use the letter $P$ to denote a bounded partially ordered set. 
The minimum element of $P$ will be customarily denoted by $\widehat{0}$ whereas the maximum will be denoted by $\widehat{1}$. We say that $P$ is \emph{graded} if all maximal chains in each interval of $P$ have the same length. This implies the existence of a map $\rho:P\to \mathbb{Z}_{\geq 0}$ called \emph{the rank function} of $P$, where $\rho(s)$ records the size of any maximal chain starting at $\widehat{0}$ and ending at $s$. We will customarily write $\rho_{st}$ to denote $\rho(t) - \rho(s)$ for $s\leq t$.

The \emph{incidence algebra} of $P$, denoted by $\mathcal{I}(P)$, is the free $\mathbb{Z}[x]$-module over $\Int(P)$, the set of all closed intervals of $P$. In other words, an element $a\in \mathcal{I}(P)$ associates to each closed interval $[s,t]\in \Int(P)$ a polynomial $a_{st}(x)\in \mathbb{Z}[x]$. When we need to specify the variable $x$, we will write $a_{st}(x)$. The product (also known as convolution) of two elements $a,b\in \mathcal{I}(P)$ is defined via
    \[ (ab)_{st} = \sum_{s\leq w\leq t} a_{sw}\, b_{wt} \qquad \text{ for every $s\leq t$ in $P$}.\]
This product operation makes $\mathcal{I}(P)$ into an associative algebra, having an identity $\delta$, given by $\delta_{st} = 0$ for $s < t$ and $\delta_{ss} = 1$ for all $s\in P$. The $\zeta$-function of $P$ is the element $\zeta\in \mathcal{I}(P)$ such that $\zeta_{st} = 1$ for all $s\leq t$. The \emph{M\"obius function} of $P$ is the element $\mu =\zeta^{-1}$. 

We consider a subalgebra $\mathcal{I}_{\rho}(P)$ given by
    \[ \mathcal{I}_{\rho}(P) := \left\{ a \in \mathcal{I}(P) : \deg a_{st} \leq \rho_{st} \text{ for all $s\leq t$}\right\}.\]
There is an involution $\mathcal{I}_{\rho}(P) \to \mathcal{I}_{\rho}(P)$ that we denote by $a\mapsto a^{\rev}$, and is defined by the following equality:
    \[ (a^{\rev}_{st})(x) = x^{\rho_{st}} a_{st}(x^{-1}).\]
A $P$-kernel is an element $\kappa\in \mathcal{I}_{\rho}(P)$ such that $\kappa_{ss}=1$ for all $s\in P$ and $\kappa^{\rev} = \kappa^{-1}$. 

\begin{theorem}\label{thm:kls_functions}
    Let $\kappa\in \mathcal{I}_{\rho}(P)$ be a $P$-kernel. There exists a unique element $f\in \mathcal{I}_{\rho}(P)$ satisfying the following properties:
    \begin{enumerate}[\normalfont(i)]
        \item \label{it:f-i}$f_{ss}(x) = 1$ for all $s\in P$.
        \item \label{it:f-ii} $\deg f_{st}(x) < \frac{1}{2} \rho_{st}$ for all $s < t$.
        \item \label{it:f-iii} $f^{\rev} = \kappa\cdot f$.
    \end{enumerate}
    Similarly, there exists a unique element $g\in \mathcal{I}_{\rho}(P)$ satisfying the following properties:
    \begin{enumerate}[\normalfont(i')]
        \item \label{it:g-i} $g_{ss}(x) = 1$ for all $s\in P$.
        \item \label{it:g-ii} $\deg g_{st}(x) < \frac{1}{2} \rho_{st}$ for all $s < t$.
        \item \label{it:g-iii} $g^{\rev} = g \cdot \kappa$.
    \end{enumerate}
\end{theorem}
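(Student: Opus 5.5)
We prove the statement for $f$ by constructing $f_{st}$ interval by interval, by induction on $\rho_{st}$. Uniqueness falls out of the same argument. The statement for $g$ then follows by the symmetric argument, or by passing to the dual poset $P^*$.

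Fix $s<t$ and assume $f_{wt}$ has been determined for all $s<w\le t$, satisfying (i)–(iii) on those intervals. Write $r=\rho_{st}$ and set $f_{ss}=1$. Condition (iii) on $[s,t]$ reads
\[
x^{r}f_{st}(x^{-1}) = \sum_{s\le w\le t}\kappa_{sw}f_{wt} = f_{st} + \sum_{s<w\le t}\kappa_{sw}f_{wt},
\]
using $\kappa_{ss}=1$. Denote by $R_{st}:=\sum_{s<w\le t}\kappa_{sw}f_{wt}$ the known part, which lies in degree $\le r$. The equation becomes $f_{st}^{\rev}-f_{st}=R_{st}$. Since $\deg f_{st}<r/2$, the polynomial $f_{st}$ is supported in degrees $<r/2$, while $f_{st}^{\rev}$ is supported in degrees $>r/2$. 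Hence there is at most one solution: $f_{st}$ must equal minus the part of $R_{st}$ in degrees $<r/2$. This gives uniqueness. For existence, define $f_{st}$ this way. The equation then holds if and only if $R_{st}$ is antisymmetric, i.e.\ $R_{st}^{\rev}=-R_{st}$ with respect to degree $r$. This in particular forces the coefficient of $x^{r/2}$ to vanish when $r$ is even.

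The main step is proving this antisymmetry, and it is where the kernel property $\kappa^{\rev}=\kappa^{-1}$ enters. Note that $\rev$ is multiplicative on $\mathcal{I}_\rho(P)$, since $\rho_{sw}+\rho_{wt}=\rho_{st}$. Work inside the subalgebra restricted to $[s,t]$, and let $\tilde f$ agree with $f$ except that $\tilde f_{st}:=0$. Then $R_{st}=(\kappa\tilde f)_{st}$. By the induction hypothesis, $\tilde f^{\rev}-\kappa\tilde f$ vanishes on all intervals $[w,t]$ with $w>s$, and its $[s,t]$ entry is $-R_{st}$. Call this element $E$; it is supported only at the entry $[s,t]$. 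Now compute
\[
\kappa E^{\rev} = \kappa\tilde f - \kappa\kappa^{\rev}\tilde f^{\rev} = \kappa\tilde f - \tilde f^{\rev} = -E.
\]
Because $E$ is supported only at $[s,t]$ and $\kappa_{ss}=1$, we get $(\kappa E^{\rev})_{st}=E^{\rev}_{st}$. Therefore $E^{\rev}_{st}=-E_{st}$, which is exactly the antisymmetry of $R_{st}$.

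The bookkeeping that needs care is the restriction to entries $[\cdot,t]$. We only use columns ending at $t$, and the product formula for such entries involves only $\tilde f_{wt}$ and $\kappa_{sw}$, so the entries outside this column, still undetermined at this stage, never appear.

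For $g$, the equation $g^{\rev}=g\kappa$ on $[s,t]$ is solved by inducting on $[s,w]$ intervals with the roles of left and right swapped. The same antisymmetry computation goes through with $E^{\rev}\kappa=-E$.
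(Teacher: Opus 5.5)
Your argument is correct. The paper gives no proof of this theorem; it recalls it as background from the KLS framework of Stanley and Proudfoot. Your proof is essentially the standard one from those sources: column-by-column induction, uniqueness from the degree separation of $f_{st}$ and $f_{st}^{\rev}$, and existence reduced to the antisymmetry $R_{st}^{\rev}=-R_{st}$. The only difference is packaging. You check the antisymmetry with the single-entry error element $E$ and $\kappa\kappa^{\rev}=\delta$, whereas the usual proof expands $R_{st}^{\rev}$ directly using $\sum_{s<w\le y}\kappa^{\rev}_{sw}\kappa_{wy}=-\kappa_{sy}$ for $s<y$; this is the same computation.
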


Following \cite[Section~2]{proudfoot-kls}, we refer to $f$ (respectively $g$) as the \emph{right} (respectively \emph{left}) Kazhdan--Lusztig--Stanley (KLS) function associated with $\kappa$. We further define
\[
f_P(x) := f_{\widehat{0}\,\widehat{1}}(x) \quad \text{and} \quad g_P(x) := g_{\widehat{0}\,\widehat{1}}(x),
\]
and call them the \emph{right} and \emph{left} \emph{Kazhdan--Lusztig--Stanley (KLS) polynomials} of $P$, respectively.

\subsection{Eulerian Posets}

We will assume that the reader is acquainted with the basic properties of Eulerian posets, and we refer to \cite[Section~3.16]{stanley-ec1} for any undefined terminology. However, we make a brief recapitulation of some essential notions that we will use.

A poset $P$ is said to be \emph{Eulerian} if the M\"obius function satisfies $\mu_{st} = (-1)^{\rho(t) - \rho(s)}$ for every $s\leq t$ in $P$. An equivalent way of stating this property consists in saying that $P$ is Eulerian if and only if every nontrivial interval $[s,t]$ contains the same number of elements of odd rank and even rank. Famous examples of Eulerian posets include face posets of convex polytopes, cell posets of regular CW-spheres, and Bruhat intervals of Coxeter groups.

The following provides a characterization of Eulerian posets in terms of kernels.

\begin{proposition}[{\cite[Proposition~7.1]{Stan-loc}}]
    Let $P$ be a finite graded bounded poset. Then $P$ is Eulerian if and only if the element $\varepsilon\in \mathcal{I}_{\rho}(P)$ given by $\varepsilon_{st}(x) = (x-1)^{\rho_{st}}$ is a $P$-kernel.
\end{proposition}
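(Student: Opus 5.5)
We must prove: $P$ is Eulerian if and only if $\varepsilon$, with $\varepsilon_{st}(x)=(x-1)^{\rho_{st}}$, is a $P$-kernel. Clearly $\varepsilon\in\mathcal{I}_\rho(P)$ and $\varepsilon_{ss}=1$. So the content is the equivalence between Eulerianity and the identity $\varepsilon^{\rev}\varepsilon=\delta$. Since $\mathcal{I}_\rho(P)$ is a finite-dimensional triangular algebra, a one-sided inverse is two-sided, so $\varepsilon^{\rev}\varepsilon=\delta$ suffices.

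First compute $\varepsilon^{\rev}_{st}(x)=x^{\rho_{st}}(x^{-1}-1)^{\rho_{st}}=(1-x)^{\rho_{st}}$. For $s<t$ this gives
\[
(\varepsilon^{\rev}\varepsilon)_{st}=\sum_{s\le w\le t}(1-x)^{\rho_{sw}}(x-1)^{\rho_{wt}}=(x-1)^{\rho_{st}}\sum_{s\le w\le t}(-1)^{\rho_{sw}}.
\]
Hence $\varepsilon$ is a $P$-kernel if and only if $\sum_{s\le w\le t}(-1)^{\rho_{sw}}=0$ for all $s<t$. This condition says that every nontrivial interval $[s,t]$ has equally many elements of even and odd rank relative to $s$.

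It remains to show this condition is equivalent to $\mu_{st}=(-1)^{\rho_{st}}$ for all $s\le t$. Define $\nu\in\mathcal{I}(P)$ by $\nu_{st}=(-1)^{\rho_{st}}$. Then
\[
(\nu\zeta)_{st}=\sum_{s\le w\le t}(-1)^{\rho_{sw}},
\]
so the condition above is exactly $\nu\zeta=\delta$. Since $\zeta$ is invertible with inverse $\mu$, this holds if and only if $\nu=\mu$, which is the definition of Eulerian.

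The only subtle point is the one-sided versus two-sided inverse. It is resolved by noting that $\varepsilon$ has diagonal entries equal to $1$, so it is invertible in $\mathcal{I}(P)$, and then a left inverse agrees with the inverse. No other step presents an obstacle.
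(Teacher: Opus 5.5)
Your proof is correct. The computation $(\varepsilon^{\rev}\varepsilon)_{st}=(x-1)^{\rho_{st}}\sum_{s\le w\le t}(-1)^{\rho_{sw}}$, followed by recognizing the vanishing of these sums as $\nu\zeta=\delta$ (i.e.\ $\nu=\mu$), is the standard direct verification; the paper itself gives no proof and simply cites Stanley.

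One small wording fix: $\mathcal{I}_\rho(P)$ is not a finite-dimensional vector space, since it is a finite-rank free $\mathbb{Z}$-module. The justification you actually rely on is the right one: unit diagonal entries make $\varepsilon$ invertible in $\mathcal{I}(P)$, so a one-sided inverse equals $\varepsilon^{-1}$.
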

We will refer to $\varepsilon$ as the \emph{Eulerian $P$-kernel}. 
The left KLS polynomial $g_P(x)$ arising from the $P$-kernel $\varepsilon$ in an Eulerian poset $P$ is commonly known as the \emph{toric $g$-polynomial of $P$}. It is not hard to see that the right KLS polynomial $f_P(x)$ arising from $\varepsilon$ equals the toric $g$-polynomial of the dual poset $P^*$.
Define the \emph{reduced Eulerian $P$-kernel} $\overline{\varepsilon}\in \mathcal{I}_{\rho}(P)$ as $$\overline{\varepsilon}_{st}(x) = \begin{cases} (x-1)^{\rho_{st} - 1} & s < t,\\ -1 & s = t,\end{cases}$$
and consider $\H := -\left(\overline{\varepsilon}\right)^{-1}$.
This element of the incidence algebra of $P$ is called the \emph{$\varepsilon$-Chow function}. From this, define $F = \H \cdot f^{\rev}$ and $G = g^{\rev}\cdot \H$, called \emph{the right (resp. left) augmented $\varepsilon$-Chow function}. For more details about Chow functions, we refer to \cite{ferroni-matherne-vecchi}. 

Finally, the \emph{toric $h$-polynomial} of $P$ is defined as
    \[ h_P(x) = \frac{g_P^{\rev}(x) - g_P(x)}{x-1}.\]
More generally, 
\[h_{st}(x) = \frac{g_{st}^{\rev}(x) -g_{st}(x)}{x-1}\] for each $s<t$ in $P$.
In Stanley's book \cite[Section~3.16]{stanley-ec1} this polynomial is denoted by the letter $f$, but recall that here we reserve that notation for the right KLS polynomial.  The work of Bayer and Ehrenborg \cite{bayer-ehrenborg} provides a thorough study of toric $g$-polynomials and toric $h$-polynomials of more general (not necessarily Eulerian) posets. For a bounded poset $P$, we write
\[
\H_P(x):=\H_{\widehat{0}\widehat{1}}(x),\qquad
F_P(x):=F_{\widehat{0}\widehat{1}}(x),\qquad
G_P(x):=G_{\widehat{0}\widehat{1}}(x).
\]

\subsection{Cohen--Macaulay Posets}
Recall that to every poset $P$ we may associate a simplicial complex $\Delta(P)$, called the \emph{order complex of $P$}. The faces of $\Delta(P)$ correspond to chains of elements in $P$.  If $P$ is bounded, we write $\overline P:=P\setminus\{\widehat0,\widehat1\}$ for its proper part, and call $\Delta(\overline P)$ the \emph{proper
order complex of $P$}.

Like any other simplicial complex, $\Delta(P)$ admits a geometric realization that we will denote $|\Delta(P)|$.
For a poset $P$, we call $J$ an open interval of $P$ if it is of the form of one of the following posets $(s,t) = \{w\in P : s < w < t\}$, $P_{<t} = \{w\in P : w < t\}$, or $P_{>s} = \{w\in P : s < w\}$ for some $s<t$ in $P$. Note that every interval of a graded poset is itself a graded poset. By definition, we say that $P$ is \emph{Cohen--Macaulay} (over $\mathbb{Q}$) if the rational reduced homology groups of the order complex of every open interval $J \subseteq P$ satisfy
    \[ \widetilde{H}_i(\Delta(J)) = 0 \qquad \text{ for all $i < \dim \Delta(J)$}.\]

In other words, the (reduced) homology of every open interval $J$ must be concentrated in dimension $\dim \Delta(J)$. The class of Cohen--Macaulay posets comprises a number of well-studied families, such as geometric lattices, distributive lattices, posets that are EL-shellable, etc. It is worth noting that Cohen--Macaulayness is a topological property, that is, if $P_1$ and $P_2$ are posets and there is a homeomorphism $|\Delta(P_1)|\approx |\Delta(P_2)|$ then $P_1$ is Cohen--Macaulay if and only if $P_2$ is Cohen--Macaulay.

Furthermore, we say that a poset is \textit{Gorenstein*} if it is both Eulerian and Cohen--Macaulay. In order to prove that a poset is Cohen--Macaulay, a very useful result is the following.

\begin{theorem}[{\cite[Corollary~10.12]{bjorner_topological_methods}}] \label{thm:poset_map_homotopically_equivalent}
Let $P$ be a poset and $f: P \to P$ an order-preserving function. If $f(x) \geq x$ for every $x \in P$, then $|\Delta(P)|$ is homotopically equivalent to $|\Delta(f(P))|$. 
\end{theorem}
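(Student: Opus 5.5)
The plan is to show that the inclusion $i\colon f(P)\hookrightarrow P$ and the corestriction $r\colon P\to f(P)$, $r(x)=f(x)$, induce mutually inverse homotopy equivalences on geometric realizations. Both maps are order-preserving, so they induce simplicial maps $|\Delta(i)|$ and $|\Delta(r)|$ between the order complexes. The key tool is the standard fact that comparable poset maps induce homotopic maps. I prove this first and then apply it twice.

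\emph{Step 1: comparable maps are homotopic.} Let $\phi,\psi\colon Q\to R$ be order-preserving with $\phi(x)\le\psi(x)$ for all $x\in Q$. Let $\mathbf{2}=\{0<1\}$ and define $H\colon Q\times\mathbf{2}\to R$ by $H(x,0)=\phi(x)$ and $H(x,1)=\psi(x)$.
\begin{itemize}
\item $H$ is order-preserving. The only nontrivial case is $(x,0)\le(y,1)$, which forces $x\le y$, and then $\phi(x)\le\psi(x)\le\psi(y)$.
\item The order complex of a product of posets satisfies $|\Delta(Q\times\mathbf{2})|\approx|\Delta(Q)|\times|\Delta(\mathbf{2})|=|\Delta(Q)|\times[0,1]$. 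This is the classical triangulation of a product by chains in the product order; I would cite it, for instance from Walker or Quillen.
\item Under this homeomorphism, $|\Delta(H)|$ restricts to $|\Delta(\phi)|$ on $|\Delta(Q)|\times\{0\}$ and to $|\Delta(\psi)|$ on $|\Delta(Q)|\times\{1\}$.
\end{itemize}
Hence $|\Delta(H)|$ is a homotopy between $|\Delta(\phi)|$ and $|\Delta(\psi)|$. Alternatively, one can avoid the product homeomorphism: for each chain $\sigma$ of $Q$, the set $\phi(\sigma)\cup\psi(\sigma)$ is a chain of $R$. So $|\Delta(\phi)|$ and $|\Delta(\psi)|$ are contiguous simplicial maps, and contiguous maps are homotopic.

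\emph{Step 2: the two composites.}
\begin{itemize}
\item First, $i\circ r=f\colon P\to P$, and $f\ge\mathrm{id}_P$ by hypothesis. Step~1 gives $|\Delta(i)|\circ|\Delta(r)|\simeq\mathrm{id}_{|\Delta(P)|}$.
\item Second, $r\circ i$ is the map $f(P)\to f(P)$ sending $y\mapsto f(y)$. This is well defined since its values lie in $f(P)$, and it is order-preserving. For $y\in f(P)$ we still have $f(y)\ge y$, so Step~1 applied with $Q=R=f(P)$ gives $|\Delta(r)|\circ|\Delta(i)|\simeq\mathrm{id}_{|\Delta(f(P))|}$.
\end{itemize}
Together these show that $|\Delta(P)|\simeq|\Delta(f(P))|$.

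The only step with real content is Step~1. The contiguity argument is the cleanest route, since it needs only the elementary fact that contiguous simplicial maps are homotopic: the straight-line homotopy between them is well defined because both images of each simplex lie in a common simplex. The rest is formal.
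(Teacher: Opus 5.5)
Your overall argument is correct and is the standard one behind the cited Corollary~10.12; the paper gives no proof of its own, only the reference. Comparable maps are homotopic, and then $i\circ r=f\ge\mathrm{id}_P$ and $r\circ i=f|_{f(P)}\ge\mathrm{id}_{f(P)}$. With the product version of Step~1, using the natural homeomorphism $|\Delta(Q\times\mathbf{2})|\approx|\Delta(Q)|\times[0,1]$ induced by the projections, everything goes through.

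The error is in the alternative you call the cleanest route. It is not true that $\phi(\sigma)\cup\psi(\sigma)$ is always a chain: for $x<y$ in $\sigma$, nothing forces $\psi(x)$ and $\phi(y)$ to be comparable. This already fails for the maps you need in Step~2. Take $P=\{0,a,b,1\}$ with $0<a<1$, $0<b<1$, and $a,b$ incomparable. Set $f(0)=b$, $f(a)=1$, $f(b)=b$, $f(1)=1$. Then $f$ is order-preserving with $f\ge\mathrm{id}_P$. But for the chain $\sigma=\{0<a\}$ you get $\sigma\cup f(\sigma)=\{0,a,b,1\}$, which is not a chain. So $\mathrm{id}_P$ and $f$ are not contiguous.

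To rescue the simplicial route, you must interpolate. Fix a linear extension $x_1,\ldots,x_n$ of $Q$, and let $\phi_k$ agree with $\psi$ on $x_{n-k+1},\ldots,x_n$ and with $\phi$ elsewhere. The switched set is an up-set of $Q$, so each $\phi_k$ is order-preserving. Moreover $\phi=\phi_0\le\phi_1\le\cdots\le\phi_n=\psi$, and consecutive maps differ at a single element $x$. For such a pair, take any chain $\sigma\ni x$ (for chains avoiding $x$ the two images coincide). Elements $y<x$ satisfy $\phi_{k-1}(y)\le\phi_{k-1}(x)\le\phi_k(x)$. Elements $y>x$ satisfy $\phi_{k-1}(y)=\phi_k(y)\ge\phi_k(x)$. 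Hence $\phi_{k-1}(\sigma)\cup\phi_k(\sigma)$ is a chain, so $\phi$ and $\psi$ lie in the same contiguity class, which is all you need. Alternatively, simply delete the contiguity remark and keep the product argument.
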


\section{Stellahedral transform}\label{sec:Stellahedral_transform}
In this section, we introduce the \textit{stellahedral transform} of a partially ordered set. We demonstrate that it can be realized as a Bier poset (see Definition~\ref{def:bier-poset} for the meaning of this term), and we subsequently prove that it preserves some fundamental poset properties.

\begin{definition}
Let $P$ be a poset with minimum element ${\zero}$. We define the \textit{stellahedral transform} of $P$ as the poset
\[
\Stell(P)= \left\{ (c,y):  c=\left\{{\zero} =c_0 < c_1 < \dots < c_k\right\} \text{ chain in $P$},\,y \in P,\, c_k \leq y\right\} \sqcup \left\{{\one}_{\Stell}\right\}
\]
where the partial order is given by imposing ${\one}_{\Stell}$ as the maximum element and taking 
\[
(c,y) \le (c',y') \iff c \subseteq c' \text{ and } y \ge y' \text{ in $P$}.
\]
\end{definition}

This poset transformation preserves several features of the poset one feeds into it. The following lemma shows that if a poset $P$ is bounded and graded with rank $n$, its stellahedral transform is bounded and graded with rank $n+1$.

\begin{lemma}
\label{lm: rank}
Let $P$ be a graded bounded poset with ${\zero}$ and $\one$ respectively the minimum element and the maximum element.
Then, the poset $\Stell(P)$ is:
\begin{itemize}
\item bounded with minimum element $(\{{\zero}\}, {\one})$ and maximum element ${\one}_{\Stell}$;
\item graded by the rank function 
\begin{align*}
\rho_{(c,y),(c',y')} &= \rho_{y'y} + |c'| - |c|, \\
\rho_{(c,y),{\one}_{\Stell(P)}}  &= \rho_{{\zero}y}-|c|+2  
\end{align*}
for every $(c,y) \le (c',y') \in \Stell(P)$.
\end{itemize}
\end{lemma}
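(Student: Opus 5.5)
Boundedness comes first. Every pair $(c,y)$ has $\zero\in c$ and $y\le\one$, so $(\{\zero\},\one)\le(c,y)$; moreover $(\{\zero\},\one)$ is itself an element, since $c_0=\zero\le\one$. The element $\one_{\Stell}$ is the maximum by construction. It remains to show that the order on pairs is a genuine partial order, and this is immediate because it is the product order of (chains, $\subseteq$) with $(P,\ge)$, restricted to a subset.

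For gradedness, I will exhibit a function $r$ on $\Stell(P)$ and show that every cover relation raises $r$ by exactly one. Since $\Stell(P)$ is finite (assuming $P$ finite), this implies that all maximal chains in every interval have the same length. I define
\[
r(c,y)=|c|-1+\rho_{y\one},\qquad r(\one_{\Stell})=\rho(\one)+1,
\]
so that $r(\{\zero\},\one)=0$. The first stated formula then reads $r(c',y')-r(c,y)=|c'|-|c|+\rho_{y'y}$, because $\rho_{y'\one}-\rho_{y\one}=\rho_{y'y}$. The second formula reads $\rho(\one)+1-|c|+1-\rho_{y\one}=\rho_{\zero y}-|c|+2$, which also agrees.

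The main step is the analysis of the covers $(c,y)\lessdot(c',y')$ with $c\subseteq c'$ and $y'\le y$.
\begin{itemize}
\item If both $c\subsetneq c'$ and $y'<y$, then $(c,y)<(c,y')<(c',y')$, and $(c,y')$ is a valid element because $\max c\le\max c'\le y'$. So this is not a cover.
\item If $c=c'$, the interval from $(c,y)$ to $(c,y')$ is isomorphic to the interval $[y',y]$ of $P$ (dually ordered). Hence we have a cover exactly when $y'\lessdot y$, and $r$ increases by one.
\item If $y=y'$ and $c\subsetneq c'$, any intermediate chain $c\subseteq c''\subseteq c'$ gives a valid element, since $\max c''\le \max c'\le y$. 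So covers correspond to $|c'|=|c|+1$, and $r$ increases by one.
\end{itemize}

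It remains to treat covers of the form $(c,y)\lessdot\one_{\Stell}$. I claim these are exactly the pairs with $y=\max c$. If $y>\max c=c_k$, then $(c,y)<(c\cup\{y\},y)<\one_{\Stell}$, where $c\cup\{y\}$ is still a chain. If instead $y=c_k$, any $(c',y')>(c,y)$ has $y'\le y=c_k\le\max c'\le y'$, which forces $y'=y$ and $\max c'=y$, hence $c'=c$; so $(c,y)$ is maximal among pairs.

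To finish the gradedness check I must verify that $r(c,c_k)+1=\rho(\one)+1$ for every such coatom. The pair $(c,c_k)$ is only a coatom, with $r=\rho(\one)$, when $c$ is a maximal chain of $[\zero,c_k]$ and $c_k=\one$. For other maximal pairs, for example $(\{\zero\},\zero)$, we get $r=\rho_{\zero\one}$ only if $\rho_{\zero\one}=0$; otherwise the formula fails. I expect this to be the main obstacle: the pair $(\{\zero\},\zero)$ is an element lying directly below $\one_{\Stell}$ at too low a rank. I will first re-check whether the intended rule is that $\one_{\Stell}$ only covers maximal elements in a way that preserves rank. If it is not, the fix to try is to require the $c_i$ to lie in $P\setminus\{\one\}$ and $y$ to range appropriately, or to interpret the rank formula $\rho_{\zero y}-|c|+2$ as forcing the coatoms to be the pairs $(c,y)$ with $c$ a maximal chain of $[\zero,y]$. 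Gradedness would then require showing that every maximal element of the pair part has $|c|=\rho_{\zero y}+1$. For the pair $(\{\zero\},\zero)$ this holds, since $|c|=1=\rho_{\zero\zero}+1$, and its distance to $\one_{\Stell}$ is $0-1+2=1$, consistent with the second formula. In that reading, gradedness follows by defining $r$ relative to each interval and checking additivity along chains.
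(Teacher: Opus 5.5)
There is a genuine gap in your treatment of the covers $(c,y)\lessdot\one_{\Stell}$, and the obstacle you end with is produced by two errors. First, the claim that every pair with $y=\max c$ is maximal is false. In the step ``$\max c'=y$, hence $c'=c$'' you overlook that $c'$ may contain extra elements strictly between $\zero$ and $y$. For instance, for $P=C_2=\{0<1<2\}$ one has $(\{0,2\},2)<(\{0,1,2\},2)$.

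The correct statement is that $(c,y)$ is maximal among pairs if and only if $y=\max c$ and $c$ is a maximal chain of $[\zero,y]$. If $y>\max c$, adjoin $y$ to $c$. If $c$ is not maximal in $[\zero,y]$, adjoin some $z\in[\zero,y]\setminus c$ with $c\cup\{z\}$ a chain; this keeps the maximum equal to $y$. Conversely, any $(c',y')\ge(c,y)$ satisfies $y\le\max c'\le y'\le y$. So $c'$ is a chain of $[\zero,y]$ containing the maximal chain $c$, whence $c'=c$.

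Second, your rank computation for coatoms is wrong. Since $P$ is graded, a maximal chain of $[\zero,y]$ has $|c|=\rho_{\zero y}+1$. Hence
\[
r(c,y)=|c|-1+\rho_{y\one}=\rho_{\zero y}+\rho_{y\one}=\rho(\one)
\]
for every coatom, with no need for $c_k=\one$. In particular $r(\{\zero\},\zero)=1-1+\rho_{\zero\one}=\rho(\one)$, exactly the required value; you seem to have dropped the $\rho_{y\one}$ term. So $(\{\zero\},\zero)$ does not sit at too low a rank, and no change to the definition is needed. The ``reading'' you propose at the end is simply the true statement, which your text never proves and which contradicts your earlier paragraph.

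Your boundedness argument, the definition of $r$, and the analysis of covers among pairs are correct. Once the coatom characterization above is added, your argument coincides with the paper's proof. The paper does the same case analysis of covers (either $c=c'$ and $y'\lessdot y$, or $y=y'$ and $|c'|=|c|+1$) and then notes that a top cover has $|c|=\rho_{\zero y}+1$, which is precisely your missing step. As submitted, though, the proposal states a false description of the coatoms and stops at an obstruction that does not exist, so it does not yet prove the lemma.
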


\begin{proof}
By construction, $\Stell(P)$ is bounded by $(\{{\zero}\}, {\one}) $ and $\one_{\Stell(P)}$. 
To show that $\Stell(P)$ is graded by the rank function described in the statement, consider a covering relation $(c,y) \lessdot (c',y')$ for some $(c,y),\,(c',y') \in \Stell(P)$. We have that either $y$ covers $y'$ in $P$ and $c= c'$, or $y = y'$ and $c'$ covers $c$ in the poset of chains, which means $c \subseteq c'$ and  $|c'| = |c| + 1$.
In both cases, $\rho_{y'y} + |c'| - |c|=1$. For a top cover, $|c|=\rho_{{\zero}y}+1$. This proves the stated rank formulas.%By induction on the length of maximal chains, $\Stell(P)$ is a graded poset with the stated rank function.
\end{proof}

The stellahedral transform of a poset might exhibit a subtle behavior even when the starting poset is quite simple: the condition $c_k \leq y$ interlaces the internal structure of $P$ with that of its chains, making $\Stell(P)$ significantly different from $P$, its order complex $\Delta(P)$, and  their direct product.

\begin{example}
Let $ A_n$ be the antichain of $n$ elements. Call $P$ the poset obtained by adjoining a minimum element ${\zero}$ to $A_n$. Then $\Stell(P)$ has the Hasse diagram shown in Figure \ref{fig:antichain_st}.
\begin{figure}[ht]
    \centering
    \begin{tikzpicture}[baseline=(current bounding box.center),
      scale=0.85,auto=center,
      every node/.style={circle,scale=0.8,fill=black,inner sep=2.7pt}]
      \tikzstyle{edges}=[thick]

      % --- LEFT DIAGRAM: P_n ---
      \node[label=below:{ \itshape ${\zero}$}] (cero) at (0,0) {};
      \node[label=above:{ \itshape $x_1$}] (x1) at (-1.5,2) {};
      \node[label=above:{ \itshape $x_2$}] (x2) at (-0.5,2) {};
      \node[fill=none,draw=none,scale=1.2,inner sep=0] (dots1) at (0.5,2) {$\dots$};
      \node[label=above:{ \itshape $x_n$}] (xn) at (1.5,2) {};

      % Edges
      \draw[edges] (cero) -- (x1);
      \draw[edges] (cero) -- (x2);
      \draw[edges] (cero) -- (xn);
    \end{tikzpicture}
    \hspace{1.5cm}
    \begin{tikzpicture}[baseline=(current bounding box.center),
      scale=0.7,auto=center,
      every node/.style={circle,scale=0.8,fill=black,inner sep=2.7pt}]
      \tikzstyle{edges}=[thick];

      % --- RIGHT DIAGRAM: St(P_n) ---
      
      % RANK 0: Bottom elements
      \node[label={[label distance=-2pt]below:{\small  \itshape $\{{\zero}\} x_1\,$}}] (b1) at (-2,0) {};
      \node[label={[label distance=-2pt]below:{\small  \itshape $\{{\zero}\} x_2\,$}}] (b2) at (0,0) {};
      \node[fill=none,draw=none,scale=1.2,inner sep=0] (bdots) at (2,0) {\large $\dots$};
      \node[label={[label distance=-2pt]below:{\small  \itshape $\{{\zero}\} x_n\,$}}] (bn) at (4,0) {};

      % RANK 1: Middle elements
      \node[label=left:{ \small \itshape $\{{\zero}\} {\zero}\,$}] (m0) at (-4,2.5) {};
      \node[label=right:{ \small  \itshape $\{{\zero}, x_1\} x_1\,$}] (m1) at (-2,2.5) {};
      \node[label=right:{  \small \itshape $\{{\zero}, x_2\} x_2\,$}] (m2) at (0,2.5) {};
      \node[fill=none,draw=none,scale=1.2,inner sep=0] (mdots) at (2,1.25) {\large $\dots$};
      \node[fill=none,draw=none,scale=1.2,inner sep=0] (mdots) at (2.65,2.5) {\large $\dots$};
      \node[label=right:{ \small  \itshape $\{{\zero}, x_n\} x_n\,$}] (mn) at (4,2.5) {};

      % RANK 2: Top element
      \node[label=above:{ \small \itshape  ${\one}_{\Stell}$}] (uno) at (0,5) {};

      % Edges from rank 0 to rank 1
      \draw[edges] (b1) -- (m0);
      \draw[edges] (b2) -- (m0);
      \draw[edges] (bn) -- (m0);
      
      \draw[edges] (b1) -- (m1);
      \draw[edges] (b2) -- (m2);
      \draw[edges] (bn) -- (mn);

      % Edges from rank 1 to rank 2
      \draw[edges] (m0) -- (uno);
      \draw[edges] (m1) -- (uno);
      \draw[edges] (m2) -- (uno);
      \draw[edges] (mn) -- (uno);

      % Label under diagram
      %\node[fill=none,draw=none, scale=1.2] at (0,-1.75) {$\Stell(P)$};
    \end{tikzpicture}
    \caption{On the left, the Hasse diagram of $P = A_n \sqcup \{{\zero}\}$; on the right, the Hasse diagram of $\Stell(P)$.}
    \label{fig:antichain_st}
\end{figure}
\end{example}
\begin{example}
Let $P = C_n = \{0,1,\dots,n\}$ be the chain of rank $n$. Then $\Stell(P)\setminus\{\one_{\Stell}\}$ is isomorphic to the following subposet of $B_n \times C_n$
\[
\{ (S,x) \in B_n \times C_n \mid S \neq \emptyset \Rightarrow x \leq n-\max(S) \}
\]
where $B_n = \mathcal{P}([n])$ is the boolean lattice of rank $n$.
\begin{figure}[ht!]
    \centering
    \begin{tikzpicture}[baseline=(current bounding box.center),
      scale=0.85,auto=center,
      every node/.style={circle,scale=0.8,fill=black,inner sep=2.7pt}]
      \tikzstyle{edges}=[thick];

      % RANK 4: Maximal element
      \node[label=above:{\footnotesize $\widehat{1}_{\mathrm{St}}$}] (n4) at (0,6) {};

      % RANK 3
      \node[label=left:{\footnotesize $(\emptyset, 3)$}] (n3_0) at (-4.5,4.5) {};
      \node[label=left:{\footnotesize $(\{1\}, 2)$}] (n3_1) at (-1.5,4.5) {};
      \node[label=right:{\footnotesize $(\{1,2\}, 1)$}] (n3_12) at (1.5,4.5) {};
      \node[label=right:{\footnotesize $(\{1,2,3\}, 0)$}] (n3_123) at (4.5,4.5) {};

      % RANK 2
      \node[label=left:{\footnotesize $(\emptyset, 2)$}] (n2_0) at (-5,3) {};
      \node[label=left:{\footnotesize $(\{1\}, 1)$}] (n2_1) at (-3,3) {};
      \node[label=left:{\footnotesize $(\{2\}, 1)$}] (n2_2) at (-1,3) {};
      \node[label=right:{\footnotesize $(\{1,2\}, 0)$}] (n2_12) at (1,3) {};
      \node[label=right:{\footnotesize $(\{1,3\}, 0)$}] (n2_13) at (3,3) {};
      \node[label=right:{\footnotesize $(\{2,3\}, 0)$}] (n2_23) at (5,3) {};

      % RANK 1
      \node[label=left:{\footnotesize $(\emptyset, 1)$}] (n1_0) at (-4.5,1.5) {};
      \node[label=left:{\footnotesize $(\{1\}, 0)$}] (n1_1) at (-1.5,1.5) {};
      \node[label=right:{\footnotesize $(\{2\}, 0)$}] (n1_2) at (1.5,1.5) {};
      \node[label=right:{\footnotesize $(\{3\}, 0)$}] (n1_3) at (4.5,1.5) {};

      % RANK 0
      \node[label=below:{\footnotesize $(\emptyset, 0)$}] (n0) at (0,0) {};

      % --- EDGES ---
      
      % R0 to R1
      \draw[edges] (n0) -- (n1_0);
      \draw[edges] (n0) -- (n1_1);
      \draw[edges] (n0) -- (n1_2);
      \draw[edges] (n0) -- (n1_3);

      % R1 to R2
      % from (\emptyset, 1)
      \draw[edges] (n1_0) -- (n2_0);
      \draw[edges] (n1_0) -- (n2_1);
      \draw[edges] (n1_0) -- (n2_2);
      % from ({1}, 0)
      \draw[edges] (n1_1) -- (n2_1);
      \draw[edges] (n1_1) -- (n2_12);
      \draw[edges] (n1_1) -- (n2_13);
      % from ({2}, 0)
      \draw[edges] (n1_2) -- (n2_2);
      \draw[edges] (n1_2) -- (n2_12);
      \draw[edges] (n1_2) -- (n2_23);
      % from ({3}, 0)
      \draw[edges] (n1_3) -- (n2_13);
      \draw[edges] (n1_3) -- (n2_23);

      % R2 to R3
      % from (\emptyset, 2)
      \draw[edges] (n2_0) -- (n3_0);
      \draw[edges] (n2_0) -- (n3_1);
      % from ({1}, 1)
      \draw[edges] (n2_1) -- (n3_1);
      \draw[edges] (n2_1) -- (n3_12);
      % from ({2}, 1)
      \draw[edges] (n2_2) -- (n3_12);
      % from ({1,2}, 0)
      \draw[edges] (n2_12) -- (n3_12);
      \draw[edges] (n2_12) -- (n3_123);
      % from ({1,3}, 0)
      \draw[edges] (n2_13) -- (n3_123);
      % from ({2,3}, 0)
      \draw[edges] (n2_23) -- (n3_123);

      % R3 to R4 (Global Max)
      \draw[edges] (n3_0) -- (n4);
      \draw[edges] (n3_1) -- (n4);
      \draw[edges] (n3_12) -- (n4);
      \draw[edges] (n3_123) -- (n4);

    \end{tikzpicture}
    \caption{Hasse diagram of $\Stell(C_3)$.}
    \label{fig:st_c3}
\end{figure}
\end{example}

The construction of the stellahedral transform of a poset can be reinterpreted using the concept of a \textit{Bier poset}, originally defined in \cite{bjorner_bier_2005}.  
\begin{definition}\label{def:bier-poset}
Let $P$ be a bounded poset of finite length and let $I \subseteq P$ be a proper ideal. Then the poset $\Bier(P,I)$ is defined as
\[
\Bier(P,I) = \{ [x,y] \subseteq P \mid x \in I,\, y \not\in I\} \sqcup \{{\one}_{\Bier}\}
\]
by imposing ${\one}_{\Bier}$ as the maximum and ordering the other elements by reverse inclusion.
\end{definition}

Given a bounded poset $P$, let
$I_P := \left\{ \{{\zero} = y_0 < y_1 < \dots < y_r\} \text{ chain in $P$} \right\}$ be the set of all chains of $P$ anchored at the bottom element $\widehat{0}$. This is naturally a poset using the partial order given by set inclusion. We now define an intermediate construction that will turn out to be useful in our discussion. 

\begin{definition}
    Let $P$ be a bounded poset. The \textit{chain lift} of $P$ is the poset on the set $P^{\uparrow} := I_P \sqcup P$, with the order given by
    \[
p \leq_{{\uparrow}} q \iff 
\begin{cases}
p \subseteq q  & p,q \in I_P, \\
p \leq q & p,q \in P, \\
\max p \leq q & p \in I_P,\,q \in P.
\end{cases}
\]
\end{definition}

Notice that if $P$ is a graded poset, then the chain lift $P^{\uparrow}$ is graded by 
\[
\rho^{\uparrow}_{p,q} = \begin{cases} |q|-|p| &\text{if }p,q \in I_P, \\
\rho_{p,q} &\text{if }p,q \in P, \\
\rho_{{\zero},q} - |p|+2 & \text{if }p \in I_P \text{ and }q \in P.
\end{cases}
\]
From this setup, we obtain the following identity
\begin{equation}\label{eq:St_Bier}
\Stell(P) = \Bier(P^{\uparrow},I_P). 
\end{equation}
We now prove some properties of $P^{\uparrow}$. Then, using known results on Bier posets, it will be possible to transfer them to the stellahedral transform $\Stell(P)$.
\begin{lemma}
If $P$ is an Eulerian poset, then $P^{\uparrow}$ is also Eulerian.
\end{lemma}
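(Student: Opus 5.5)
We verify directly that every nontrivial interval $[p,q]$ of $P^{\uparrow}$ has Möbius function $(-1)^{\rho^{\uparrow}_{pq}}$. Equivalently, each such interval has equally many elements of even and odd rank. The argument splits into three cases according to where $p$ and $q$ live. Note that $P^{\uparrow}$ need not be bounded: it has minimum $\{\zero\}$ and maximum $\one$ (since $\max p\le \one$ for every chain $p$). So we only need the interval condition.

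\emph{Case $p,q\in P$.} Then $[p,q]_{\uparrow}=[p,q]_P$, since no chain lies above an element of $P$. This interval is Eulerian by hypothesis.

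\emph{Case $p,q\in I_P$.} Here $[p,q]_{\uparrow}$ consists of the chains $r$ with $p\subseteq r\subseteq q$. This is a Boolean lattice on $q\setminus p$, and Boolean lattices are Eulerian.

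\emph{Case $p\in I_P$, $q\in P$.} This mixed case is the main step. The interval is the disjoint union of
\[
A=\{r\in I_P : p\subseteq r,\ \max r\le q\} \quad\text{and}\quad B=\{y\in P: \max p\le y\le q\}=[m,q]_P,
\]
where $m=\max p$. Each $r\in A$ is $p$ together with a chain $s=r\setminus p$ of the open interval $(m,q]_P$. We compute the signed sum $\sum_{z\in[p,q]}(-1)^{\rho^{\uparrow}(z)}$ and show it vanishes; taking ranks relative to $p$ is enough.

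For $r\in A$, the relative rank is $|s|$, so
\[
\sum_{r\in A}(-1)^{|s|}=\sum_{s \text{ chain in } (m,q]}(-1)^{|s|}.
\]
Chains in $(m,q]$ split by whether they contain $q$. Those avoiding $q$ give $\sum_{s\subseteq (m,q)}(-1)^{|s|}$, which by Philip Hall's theorem is $\mu_P(m,q)$ (Hall's theorem with the empty chain counted gives $\mu(m,q)$). Those containing $q$ contribute the negative of the same sum. Hence the total over $A$ is $0$ whenever $m<q$. If $m=q$, the sum over $A$ is $1$.

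For $y\in B$, the relative rank is $\rho^{\uparrow}_{p,y}=\rho_{\zero y}-|p|+2=\rho_{my}+1$, because $|p|=\rho_{\zero m}+1$ when $p$ is saturated. That saturation need not hold, so I will redo this with the general formula: the relative rank is $\rho_{\zero y}-|p|+2$, which differs from $\rho_{my}$ by a constant. The contribution of $B$ is therefore $\pm\sum_{y\in[m,q]}(-1)^{\rho_{my}}$. This vanishes when $m<q$ because $P$ is Eulerian, and equals $\pm1$ when $m=q$.

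So when $m<q$ both parts vanish and the interval is Eulerian. The delicate subcase is $m=q$. There $A$ contributes $+1$ (only $r=p$) and $B=\{q\}$ contributes $(-1)^{\rho_{\zero q}-|p|+2}$. The total vanishes exactly when $|p|-\rho_{\zero q}$ is odd. Gradedness makes the rank formula consistent, but this parity need not hold for non-saturated $p$. I would therefore carefully recheck the grading $\rho^{\uparrow}$ here, since it is the main obstacle. If the issue persists, I would reinterpret the covers, noting that $p\lessdot_{\uparrow} q$ requires $|p|=\rho_{\zero q}+1$. The interval $[p,q]$ with $p=\{\zero,q\}$ and $\rho_{\zero q}\ge 2$ is then a two-element non-graded situation. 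The statement should thus be read, in line with the grading claimed in the paper, with the mixed relations compatible with rank. I would settle this by checking that the paper's grading forces the required parity, and otherwise by restricting to the intervals actually used in $\Bier(P^{\uparrow},I_P)$.
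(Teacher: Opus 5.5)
\textbf{There is a genuine gap in the mixed case $p\in I_P$, $q\in P$.} The parity obstruction you end with comes from this gap, not from the statement. You describe each $r\in A$ as $p$ together with a chain of $(m,q]_P$. But $r$ only has to \emph{contain} $p$. Write $p=\{\zero=p_0<p_1<\dots<p_k=m\}$. Then $r\setminus p$ is an arbitrary union of chains $s_i\subseteq(p_i,p_{i+1})_P$ for $0\le i<k$, together with a chain $s_k\subseteq(m,q]_P$.

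For $m<q$ your conclusion survives. Toggling $q$ is a rank-parity-reversing involution on all of $A$, not only on the part you described; this is exactly the paper's argument in that case.

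For $m=q$, however, $A=\{p\}$ holds only when $p$ is saturated in $[\zero,q]$. For instance, take $p=\{\zero,q\}$ with $\rho_{\zero q}\ge 2$. Then $[p,q]$ consists of all chains of $[\zero,q]$ through $\zero$ and $q$, plus $q$ itself. This is the face poset of $\Delta((\zero,q))$ (empty face at the bottom) with a top adjoined. It is graded of rank $\rho_{\zero q}$, not a two-element poset. So the lemma needs no reinterpretation, but as written your proposal leaves this case unproved.

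\textbf{The missing step is short.} With the correct $A$ and $m=q$, the signed count factors over the gaps of $p$. By Hall's theorem, $\sum_{s}(-1)^{|s|}=-\mu(x,y)$, where the sum runs over all chains $s$ of $(x,y)$ including the empty one. Note the sign: you wrote $\mu$. Since $P$ is Eulerian, this equals $(-1)^{\rho_{xy}+1}$. Hence
\[
\sum_{r\in A}(-1)^{|r|-|p|}=\prod_{i=0}^{k-1}(-1)^{\rho_{p_ip_{i+1}}+1}=(-1)^{\rho_{\zero q}+|p|-1},
\]
which cancels the contribution $(-1)^{\rho_{\zero q}-|p|+2}$ of the top element $q$.

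The paper handles this case structurally instead. It identifies $[p,q]$ with an upper interval of the face poset of $\Delta((\zero,q))$ with a bottom and top adjoined, and cites Stanley's EC1 Exercise~3.141(c). The computation above is a self-contained substitute for that citation.

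Incidentally, $P^{\uparrow}$ \emph{is} bounded, with minimum $\{\zero\}$ and maximum $\one$; your remark that it ``need not be bounded'' contradicts itself.
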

\begin{proof}
Let $J= [p,q] \subseteq P^{\uparrow}$ be an interval. We prove that $[p,q]$ has the same number of elements of even and odd rank when $p<q$. We divide the proof into four cases.
\begin{enumerate}
\item If $p,q \in P \subseteq P^{\uparrow}$, then $J \subseteq P$ is Eulerian because $P$ is Eulerian.

\item If $p,q \in I_P$, then $J$ is isomorphic to a boolean lattice, which is Eulerian.

\item If $p \in I_P,\, q \in P$ and $\max p < q$, consider the map 
\begin{align*}
f:J\cap I_P &\to J \cap I_P \\
S &\mapsto S \cup \{q\} \quad \text{if $q \not \in S$} \\
S &\mapsto S \setminus \{q\} \quad \text{ if $q  \in S$}.
\end{align*}
Notice that $f$ is a rank parity reversing involution, hence $J \cap I_P$ has the same number of elements of even and odd rank. Since $J \cap P \cong [\max p, q]$, $J$ has also the same number of elements of even and odd rank.

\item If $p \in I_P$, $q \in P$ and $\max p = q$, then 
\[
J = \{ c \in I_{[{\zero},q]} \mid p \subseteq c \} \sqcup \{q\}.
\]
Let $C_q$ be the poset of nonempty chains of $({\zero},\,q)$. If $|p|>2$,  $J$ is isomorphic to the interval $[p\setminus \{{\zero},\,q\},\,\one]_{C_q \sqcup \{{\zero},\one\}}$. Similarly, if $|p| = 2$, the poset $J$ is isomorphic to $C_q \sqcup \{{\zero},\one\}$. In both cases, we may conclude by Exercise 3.141(c) of \cite{stanley-ec1} that $J$ is Eulerian. Otherwise, if $|p| = 1$ (equiv.  ${\zero} = q$), then $J$ is the unique Eulerian poset of rank $1$.\qedhere
\end{enumerate}
\end{proof}

\begin{lemma}
If $P$ is bounded and Cohen--Macaulay, then $P^{\uparrow}$ is also Cohen--Macaulay.
\end{lemma}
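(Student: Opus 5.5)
We have to show that every open interval $J$ of $P^{\uparrow}$ has rational homology concentrated in top degree. We follow the four-case division from the Eulerian lemma, now classifying open intervals $(p,q)$, together with the half-open ones $P^{\uparrow}_{<q}$ and $P^{\uparrow}_{>p}$. Since $P^{\uparrow}$ is bounded and graded with bottom $\{\zero\}$ and top $\one$, the half-open intervals are open intervals of $P^{\uparrow}$ itself, so it suffices to treat $(p,q)$ for $p<q$ in $P^{\uparrow}$, with $p$ or $q$ possibly the bottom or top element.

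\emph{Case $p,q\in P$.} Then $(p,q)$ is an open interval of $P$, which is Cohen--Macaulay by hypothesis.

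\emph{Case $p,q\in I_P$.} Then $[p,q]$ is a Boolean lattice, whose proper part has order complex a sphere of the right dimension.

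\emph{Case $p\in I_P$, $q\in P$, $\max p=q$.} As in the Eulerian proof, $[p,q]$ is isomorphic to an upper interval of $C_q\sqcup\{\zero,\one\}$, the face poset of the simplicial complex $\Delta((\zero,q))$ with a top adjoined. Its proper part is a link in the barycentric subdivision of $\Delta((\zero,q))$, up to a join with a sphere. Since $(\zero,q)$ is Cohen--Macaulay, $\Delta((\zero,q))$ is a Cohen--Macaulay complex. Links in a Cohen--Macaulay complex are Cohen--Macaulay, and Cohen--Macaulayness is a topological invariant, so the open interval has the required homology.

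\emph{Case $p\in I_P$, $q\in P$, $\max p<q$.} This is the main case. Write $J=(p,q)$; then $J=A\sqcup B$ with $A=\{c\in I_P: c\supsetneq p,\ \max c\le q\}$ and $B=\{y\in P:\max p\le y<q\}$. Elements of $A$ lie below elements of $B$ according to $\max c\le y$. I would apply Theorem~\ref{thm:poset_map_homotopically_equivalent}, or a Quillen fiber argument, to retract $A$.

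One candidate is the map $f(c)=c\cup\{q\}$ when $q\notin c$ and $f(c)=c$ otherwise. This is not increasing into $B$, but it is increasing within $A$: $f(c)\ge c$ and $f$ is order preserving. However $f$ must also be extended to $B$, and the identity on $B$ fails monotonicity, because $c<y$ does not imply $f(c)\le y$. So $f$ is the wrong tool here.

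Instead I would use the rank-selected description. The elements of $A$ containing $q$ form a cone-like part. Removing $q$ gives a homotopy-equivalence-free decomposition, and I would identify $J$, via the homotopy complementation formula, with the suspension-type join
\[
\Delta(J)\simeq \Sigma\bigl(\Delta((\max p,q)_P) * \partial\text{-sphere}\bigr)
\]
built from chains $c\setminus p$ inside $(\max p,q)$.

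An alternative and cleaner route is a shelling-free topological identification. $|\Delta(P^{\uparrow})|$ arises from $|\Delta(P)|$ by gluing cones, which is essentially a barycentric subdivision of $P$ intervals. One would then show that each open interval $(p,q)$ is homeomorphic to a join of proper parts of intervals of $P$ with spheres, in the form
\[
\Delta((p,q))\approx \Delta\bigl((\max p,q)_P\bigr)*S^{k},
\]
with $k$ determined by ranks. Joins of Cohen--Macaulay complexes with spheres are Cohen--Macaulay with the correct top dimension, so establishing this homeomorphism, or at least a homology-level version via a Mayer--Vietoris or Quillen fiber lemma argument, is the main obstacle. I would try the Quillen fiber lemma first: consider the projection $J\to B\cup\{\text{new top}\}$, $c\mapsto\max c$, check that its fibers are Boolean-like and hence contractible or spherical, and read off the homology.
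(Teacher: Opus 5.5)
The first three cases are fine. The gap is the main case $\max p<q$: you leave it open, and the identification you propose there is false.

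You are right that $f(c)=c\cup\{q\}$ cannot be extended monotonically to $B$, but it never needs to be. No chain of $J$ contains both an element $c\in A$ with $q\in c$ and an element $y\in B$, because $c<y$ would force $q=\max c\le y<q$. So set $U=A$ and $V=\{c\in A:q\notin c\}\cup B$. Then $\Delta(J)=\Delta(U)\cup\Delta(V)$ and $\Delta(U)\cap\Delta(V)=\Delta(U\cap V)$.

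On $U$, your map $f$ is increasing and order-preserving, and its image has minimum $p\cup\{q\}$. On $V$, the map $c\mapsto\max c$ (identity on $B$) is increasing and order-preserving, with image $[\max p,q)_P$, which has minimum $\max p$. By Theorem~\ref{thm:poset_map_homotopically_equivalent} both pieces are contractible, so Mayer--Vietoris gives $\tilde{H}_{i+1}(\Delta(J))\cong\tilde{H}_i(\Delta(U\cap V))$.

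Finally, $U\cap V=\{c\in A:\max c<q\}$ is isomorphic, via $c\mapsto c\setminus\{\zero\}$, to the poset of faces of $\Delta((\zero,q))$ strictly containing $p\setminus\{\zero\}$. That is exactly the situation of your third case. Hence it is Cohen--Macaulay of dimension $\dim\Delta(J)-1$, and the vanishing of $\tilde{H}_{j}(\Delta(J))$ for $j<\dim\Delta(J)$ follows. This two-piece cover is the missing idea, and it is precisely how the paper handles this case.

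Your proposed formulas $\Delta(J)\simeq\Sigma\bigl(\Delta((\max p,q)_P)*\text{sphere}\bigr)$ and $\Delta(J)\approx\Delta((\max p,q)_P)*S^k$ overlook that elements of $A$ may refine $p$ \emph{below} $\max p$. Write $p=\{\zero<p_1<\dots<p_r\}$. The argument above shows $\Delta(J)\simeq\Sigma\bigl(\Delta((\zero,p_1))*\Delta((p_1,p_2))*\cdots*\Delta((p_r,q))\bigr)$, and for a merely Cohen--Macaulay $P$ the lower factors need not be spheres.

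Concretely, let $P=\{\zero,a_1,a_2,a_3,p_1,\one\}$ with $\zero<a_i<p_1<\one$. It is Cohen--Macaulay, since every open interval is empty, a point, three points, or a cone. Take $p=\{\zero,p_1\}$ and $q=\one$. Then $J$ consists of $x_i=\{\zero,a_i,p_1\}$, $w=\{\zero,p_1,\one\}$, $z_i=\{\zero,a_i,p_1,\one\}$ and $p_1$. The only relations are $x_i<z_i$, $w<z_i$ and $x_i<p_1$. So $\Delta(J)$ is a connected graph with $8$ vertices and $9$ edges, and $\tilde{H}_1(\Delta(J))\cong\mathbb{Q}^2$, whereas $\Delta((p_1,\one)_P)*S^k=S^k$.

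The Quillen fiber suggestion also fails as stated. The target $B\cup\{\text{new top}\}$ has a maximum, and the fiber over that maximum is all of $J$, so the lemma yields nothing; its conclusion, that $J$ is contractible, is false in the example.

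A minor point: $P^{\uparrow}_{<q}$ and $P^{\uparrow}_{>p}$ are not open intervals of $P^{\uparrow}$. When nonempty they contain $\{\zero\}$, respectively $\one$, so they are cones and need no argument.
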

\begin{proof}
Let $J$ be an open interval of $P^{\uparrow}$. We need to prove that $\tilde{H}_i(\Delta(J),\mathbb{Q})=0$ for all $i< \dim \Delta(J)$, hence we may assume without loss of generality that $\dim \Delta(J)\neq 0$. We divide the proof into five possible cases. 
\begin{enumerate}[\normalfont (i)]
\item If $J \subseteq P$, it satisfies the CM condition by hypothesis.
\item If $J = (p,q)$ where $p,q \in I_P$ , then $J$ is isomorphic to an open interval of the boolean lattice, which is Cohen--Macaulay. 
\item If $J = (p,q)$ where $p=\{{\zero} = p_0< p_1 < \dots < p_{r-1}< p_r \} \in I_P,\, q \in P$ and $p_{r} = q$, we consider the face poset $C$ of the complex $\Delta({\zero},q)$ and its element $p' = \{ p_1 < \dots < p_{r-1}\} \in C$. We know that $C$ is Cohen--Macaulay, since being Cohen--Macaulay is a topological property and the open interval $(\oldhat0,q) \subseteq P$ is Cohen--Macaulay. We observe that the poset $J$ is isomorphic to $C_{>p'}$, so it satisfies the CM condition.
\item If $J = (p,q)$ where $p=\{{\zero} = p_0 < p_1 < \dots < p_r\} \in I_P,\, q \in P$ and $p_r < q$,  we take the subposets 
\begin{align*}
U &= \{ j \in J \cap I_P \mid \max(j) \leq q \},\\ 
V &= \{ j \in J \cap I_P \mid \max(j) < q \} \cup \left[p_r,q\right)_P.
\end{align*}
Notice that $U \cap V = \{ j \in J \cap I_P \mid \max(j) < q \} = (p\setminus\{{\zero}\},{\one})_{\Delta(({\zero},q))}$, which is Cohen--Macaulay by the same argument of point (iii). We may construct a pair of functions
\begin{align*}
    f:U &\to U \\
    x &\mapsto x \cup \{q\},
\end{align*}
and
\begin{align*}
    g:V &\to V \\
    x &\mapsto \max(x) ~ \quad \text{ if $x \in I_P$}, \\
    x &\mapsto x \qquad \qquad \text{if $x \in P$}.
\end{align*}
Since $f$ and $g$ are order-preserving maps such that $f(x) \geq x$ and $g(y) \geq y$ for all $x \in U$ and $y \in V$, by Theorem \ref{thm:poset_map_homotopically_equivalent} we have
\[
\Delta(U) \simeq \Delta(f(U)), \quad \Delta(V) \simeq \Delta(g(V)) = \Delta([p_r,q)).
\]
If a poset has a minimum element or a maximum element, it is always contractible since its order complex is homotopically equivalent to a pyramid. Hence $\Delta(U)$ and $\Delta(V)$ are contractible, in fact $p\cup\{q\}$ is the minimum of $f(U)$ and $p_r$ is the minimum of $g(V)$. Since $\Delta(U) \cap \Delta(V) = \Delta(U\cap V)$ and $\Delta(J) = \Delta(U) \cup \Delta(V)$, it follows from the Mayer--Vietoris Theorem that for $0 \leq i < \dim \Delta(J)-1$ we have
\[ 
\tilde{H}_{i+1}(\Delta(J),\mathbb{Q}) =  \tilde{H}_{i}(\Delta((p\setminus\{{\zero}\},{\one})_{\Delta(({\zero},q))}),\mathbb{Q}) = 
0
\]
and $\tilde{H}_{0}(\Delta(J),\mathbb{Q}) =0$. 
\item If $J=(P^{\uparrow})_{<p}$  or $J=(P^{\uparrow})_{>p}$ for some $p \in P^{\uparrow}$, then $J$ contains either a minimum element or a maximum element. Consequently, $\Delta(J)$ is contractible and hence it has trivial reduced homology groups.\qedhere 
\end{enumerate}
\end{proof}
Using the identity in equation~\eqref{eq:St_Bier}, we may apply Corollary 2.5 and Theorem 3.1 from \cite{bjorner_bier_2005} to the stellahedral transform, obtaining the following result.

\begin{theorem}
The stellahedral transform preserves the property of being Eulerian, Cohen--Macaulay and Gorenstein*.
\end{theorem}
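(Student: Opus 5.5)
The plan is to transfer the two preceding lemmas on the chain lift $P^{\uparrow}$ to $\Stell(P)$ through the identity \eqref{eq:St_Bier}, namely $\Stell(P)=\Bier(P^{\uparrow},I_P)$, and then invoke the results of Björner, Paffenholz, Sjöstrand and Ziegler on Bier posets. The key inputs are two facts from \cite{bjorner_bier_2005}. Corollary~2.5 says that if $Q$ is a bounded Eulerian poset and $I\subseteq Q$ is a proper ideal, then $\Bier(Q,I)$ is Eulerian. Theorem~3.1 says that if $Q$ is bounded and Cohen--Macaulay, then so is $\Bier(Q,I)$. So the work reduces to checking that $Q=P^{\uparrow}$ and $I=I_P$ satisfy the hypotheses of these results.

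Concretely, I would carry out the following steps.

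\begin{enumerate}[(1)]
\item Verify that $P^{\uparrow}$ is a bounded poset of finite length. Its minimum is the chain $\{\zero\}\in I_P$, since $\{\zero\}\subseteq c$ for every anchored chain $c$, and $\max\{\zero\}=\zero\le q$ for every $q\in P$. Its maximum is $\one\in P$, because every $c\in I_P$ satisfies $\max c\le \one$. Finite length follows from $P$ being finite and graded.
\item Verify that $I_P$ is a proper ideal of $P^{\uparrow}$. It is downward closed: a subchain (in $I_P$) of an anchored chain is again anchored, and no element of $P$ lies below an element of $I_P$. It is proper, since $\one\notin I_P$.
\item Check that \eqref{eq:St_Bier} is an isomorphism of posets, not merely a bijection of sets. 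An interval $[c,y]$ with $c\in I_P$ and $y\notin I_P$ corresponds to the pair $(c,y)$ with $\max c\le y$. Reverse inclusion of intervals, $[c,y]\supseteq[c',y']$, means $c\le_{\uparrow} c'$ and $y'\le y$, that is, $c\subseteq c'$ and $y\ge y'$. This is exactly the order on $\Stell(P)$, and the adjoined maximum matches $\one_{\Stell}$.
\end{enumerate}

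With these in hand, the Eulerian case is immediate. If $P$ is Eulerian, then $P^{\uparrow}$ is Eulerian by the lemma above, so $\Stell(P)$ is Eulerian by Corollary~2.5 of \cite{bjorner_bier_2005}. The Cohen--Macaulay case is identical, using the Cohen--Macaulay lemma together with Theorem~3.1. The Gorenstein* statement then follows by combining the two, since Gorenstein* is defined here as Eulerian and Cohen--Macaulay.

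The main obstacle I expect is matching hypotheses and conventions with \cite{bjorner_bier_2005}. They may require the poset to be graded with the ideal nonempty and proper, and they may state Cohen--Macaulayness over a specific field or in a different form. I would check that $P^{\uparrow}$ is graded, using the rank function $\rho^{\uparrow}$ given above, and that our rational Cohen--Macaulay notion agrees with theirs. If their convention for $\Bier$ uses the dual order, I would pass through $P\mapsto P^*$, which preserves all three properties.
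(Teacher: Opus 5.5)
Your proposal is correct and is the paper's argument: the paper proves the two chain-lift lemmas and then obtains the theorem by applying Corollary~2.5 and Theorem~3.1 of \cite{bjorner_bier_2005} through the identity $\Stell(P)=\Bier(P^{\uparrow},I_P)$ in \eqref{eq:St_Bier}. Your explicit checks are details the paper leaves implicit: that $P^{\uparrow}$ is bounded with minimum $\{\zero\}$ and maximum $\one$, that $I_P$ is a nonempty proper ideal, and that \eqref{eq:St_Bier} is an order isomorphism.
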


As we will see below, in Corollary \ref{cor:stellahedral_poset_polytope}, the stellahedral transform possesses an even more striking feature: it also preserves the property of being the face poset of a polytope. In order to prove this, it is necessary to provide a geometric interpretation of the transform, which will be given in Section \ref{sec:geometric_interpretation}.

\section{Geometric interpretation}\label{sec:geometric_interpretation}
In this section we provide a geometric interpretation of the stellahedral transform. As anticipated in Section \ref{sec:Stellahedral_transform}, the transform preserves the property of being the face poset of a polytope. The following discussion aims to construct the operation on polytopes that corresponds to the stellahedral transform on their respective face posets. 

Given an $n$-dimensional polytope $\mathcal{P} \subseteq \mathbb{R}^n$ and a face $F \leq \mathcal{P}$, we say that a point $y \in \mathbb{R}^n$ is \textit{beyond} $F$ if, for every facet $G$ of $\mathcal{P}$ containing $F$, the point $y$ lies in the open half-space determined by the hyperplane $\operatorname{aff}(G)$ opposite to $\mathcal{P}$, and for every facet $G$ not containing $F$, the point $y$ lies in the open half-space containing the interior of $\mathcal{P}$ with respect to $\operatorname{aff}(G)$.

We recall that the \textit{stellar subdivision} of a polytope $\mathcal{P}$ at a non-empty proper face $F$ is the polytope defined by taking the convex hull of $\mathcal{P}$ and $v$, where $v \in \R^n$ is any point that is beyond $F$. Although the resulting polytope depends on the choice of $v$, its face poset does not. Combinatorially, it is obtained by removing the proper faces containing $F$ and replacing them with the closure of the set 
\[
\left\{ v \star G \mid \exists H \in \mathcal{F}(\mathcal{P}) \setminus \{\mathcal{P}\}: F, G  \leq H,\, F \not\subseteq G \right\}.
\]

\begin{definition}
    Let $\mathcal{P}$ be a polytope. Up to combinatorial equivalence, we call the \textit{dual stellahedral transform} of $\mathcal{P}$ the polytope $\DStell(\mathcal{P})$ that is constructed by first taking the pyramid over $\mathcal{P}$ and then applying a stellar subdivision to every proper face strictly containing the apex of the pyramid, in order of decreasing dimension.
\end{definition}

\begin{figure}[ht!]
    \centering
    \begin{tikzpicture}[baseline=(current bounding box.center), 
      scale=2.73, 
      vertex/.style={circle,fill=black,inner sep=2pt}]
    \tikzstyle{edges}=[thick];

    % ---- Coordinates ----
    \coordinate (v1) at (0.5, 1.0);
    \coordinate (v2) at (0.175, 0.55);
    \coordinate (v3) at (0.0, 0.0);
    \coordinate (v4) at (1.0, 0.0);
    \coordinate (v5) at (0.825, 0.55);

    \draw[edges] (v1) -- (v2) -- (v3) -- (v4) -- (v5) -- cycle;

    % ---- Draw Vertices ----
    \node[vertex] at (v1) {};
    \node[vertex] at (v2) {};
    \node[vertex] at (v3) {};
    \node[vertex] at (v4) {};
    \node[vertex] at (v5) {};

    \end{tikzpicture}
    \caption{Dual stellahedral transform of the $1$-simplex.}
    \label{fig:2d_polytope}
\end{figure}
The following theorem connects the previous construction to the stellahedral transform of posets.
\begin{theorem}\label{thm:face_lattice_St(P)}
Let $\mathcal{P}$ be a polytope and $S$ be the poset of pairs $(\mathcal{C}, F)$ where
\begin{enumerate}[\normalfont(i)]
    \item $F \in \mathcal{F}(\mathcal{P})$ is a face of $\mathcal{P}$,
    \item $\mathcal{C} = \left\{F_1 \subsetneq F_2 \subsetneq \dots \subsetneq F_m \right\}$ is a (possibly empty) strict chain in $\mathcal{F}(\mathcal{P}) \setminus \{\mathcal{P}\}$,
    \item If $\mathcal{C} \neq \emptyset$, then $F \subseteq F_1$,
\end{enumerate}
  such that 
  \[
  (\mathcal{C}, F) \le ( \mathcal{C}', F') \iff F \subseteq F' \text{ and } \mathcal{C} \subseteq \mathcal{C'}.
  \]
  If we add a maximum element $\one_S$ to $S$, the face poset of $\DStell(\mathcal{P})$ is isomorphic to $S \sqcup \{\one\}$. In particular, there is an isomorphism given by the function 
  \begin{align*}
        \Phi: S \sqcup \{\one_S\} &\to  \mathcal{F}(\DStell(\mathcal{P}))   \\
        (\mathcal{C}, F) &\mapsto \operatorname{conv}(F,v_{F_1},\dots,v_{F_m}) \\
        \one_S &\mapsto \one_{\DStell(\mathcal{P})}
    \end{align*}
    where $\mathcal{C}=\left\{F_1 \subsetneq \dots \subsetneq F_m\right\}$ and $F$ is seen as embedded in the base of $\DStell(\mathcal{P})$. Here, $v_{\oldhat{0}}$ denotes the apex of the pyramid over $\mathcal{P}$, while $v_G$ denotes the vertex introduced when performing the stellar subdivision of the face $ v_{\oldhat{0}} \star G \leq \operatorname{Pyr}(\mathcal{P})$ for every nonempty proper face $G$ of $\mathcal{P}$.
\end{theorem}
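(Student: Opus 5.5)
We argue by induction on the sequence of stellar subdivisions, tracking the face poset at every stage. Let $n=\dim\mathcal{P}$ and let $v_{\oldhat0}$ be the apex of $\operatorname{Pyr}(\mathcal{P})$. The faces of the pyramid are the faces $F$ of the base $\mathcal{P}$ (including $\emptyset$) and the cones $v_{\oldhat0}\star G$ with $G$ a face of $\mathcal{P}$. The proper faces strictly containing the apex are the $v_{\oldhat0}\star G$ with $\emptyset\ne G\subsetneq\mathcal{P}$. They are subdivided in decreasing order of $\dim G$.

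Order the nonempty proper faces of $\mathcal{P}$ as $G^{(1)},G^{(2)},\dots$ with nonincreasing dimension. Let $\mathcal{Q}_j$ be the polytope obtained after subdividing at $v_{\oldhat0}\star G^{(1)},\dots,v_{\oldhat0}\star G^{(j)}$.

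\emph{Inductive claim.} The proper faces of $\mathcal{Q}_j$ are exactly the sets $\operatorname{conv}(F,v_{F_1},\dots,v_{F_m})$, ranging over pairs $(\mathcal{C},F)$ with the following properties:
\begin{enumerate}[\normalfont(a)]
\item $\mathcal{C}=\{F_1\subsetneq\dots\subsetneq F_m\}$ is a chain whose members other than $\oldhat0$ all lie in $\{G^{(1)},\dots,G^{(j)}\}$;
\item $F\subseteq F_1$ (reading the apex as $v_{\oldhat0}$ with $F_1=\oldhat0$ meaning ``no constraint beyond containment'');
\item $F$ is not contained in any $G^{(i)}$ with $i\le j$ that could be inserted into $\mathcal{C}$ below its smallest already subdivided element.
\end{enumerate}
Condition (c) has to be tuned carefully. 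The correct statement is that faces of the form $v_{\oldhat0}\star G\star\cdots$ survive only when $G$ has not yet been subdivided. Moreover, when $\mathcal{C}$ contains the apex, the chain $\mathcal{C}$ enters the final answer via $F\subseteq F_1$, where $F_1$ is the smallest element of $\mathcal{C}$.

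\emph{Induction step.} We use the combinatorial description of stellar subdivision recalled above. Subdividing at $\Phi$-face $v_{\oldhat0}\star G$ has three effects:
\begin{itemize}
\item it removes every face containing $v_{\oldhat0}\star G$;
\item it adds $v_G\star K$ for every $K$ lying in a common facet with $v_{\oldhat0}\star G$ but not containing it;
\item because the faces strictly above $G$ have already been subdivided, the faces containing $v_{\oldhat0}\star G$ at this stage are precisely $\operatorname{conv}(G',v_{\oldhat0},v_{F_2},\dots)$ with $G\subseteq G'$ and a chain of larger faces.
\end{itemize}
The new faces $v_G\star K$ then become exactly the pairs whose chain now contains $G$ directly above $\oldhat0$. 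We must verify that the ``$F\not\supseteq G$'' condition matches (b) and the facet condition matches (a). This is the main obstacle: one has to compute the facets of $\mathcal{Q}_{j}$ through a given face and check that the closure condition reproduces exactly the chain condition. A good plan is to first describe the facets of $\mathcal{Q}_j$ explicitly, namely as maximal pairs. A facet is a maximal flag $\oldhat0\subsetneq F_1\subsetneq\dots$ ending at a facet of $\mathcal{P}$, with $F$ equal to the smallest face. Containment of faces then becomes a purely combinatorial statement about flags.

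At the end, every nonempty proper face $G$ has been subdivided, so the chains are arbitrary strict chains in $\mathcal{F}(\mathcal{P})\setminus\{\mathcal{P}\}$ starting at $\oldhat0$. Omitting the always-present $\oldhat0$, the faces are exactly $\Phi(\mathcal{C},F)$ with $\mathcal{C}$ arbitrary and $F\subseteq F_1$. This is where the index shift between $\oldhat0$ and the empty face of $\mathcal{P}$ must be handled; we identify $v_{\oldhat0}$ with the chain element $\oldhat0$.

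\emph{Properties of $\Phi$.}
\begin{itemize}
\item $\Phi$ is well defined and surjective by the inductive claim.
\item $\Phi$ is injective because its vertex set determines the pair $(\mathcal{C},F)$: the new vertices recover $\mathcal{C}$, and the base vertices span $F$.
\item $\Phi$ is order-preserving in both directions. Inclusion of convex hulls of faces in a polytope is equivalent to inclusion of their vertex sets, which is equivalent to $F\subseteq F'$ together with $\mathcal{C}\subseteq\mathcal{C}'$.
\end{itemize}
Adding the top element completes the isomorphism.
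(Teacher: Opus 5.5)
There is a genuine gap. The whole content of the theorem is the identification of the faces of $\DStell(\mathcal{P})$, and your proposal does not establish it.

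\emph{The inductive claim is false as written.} You say condition (c) ``has to be tuned'', but you only add an informal remark and never state a corrected invariant. As written, (c) excludes genuine faces. In the final polytope over a square, the edge $\operatorname{conv}(v_1,v_{e_{14}})=\Phi(\{e_{14}\},v_1)$ is ruled out, because $v_1$ is itself a subdivided face that can be inserted below $e_{14}$. For the same reason (c) excludes the vertex $v_{e_{14}}=\Phi(\{e_{14}\},\emptyset)$.

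\emph{Condition (b) is not a definite condition.} With the convention that $F_1=\zero$ means ``no constraint beyond containment'', it does not pin anything down. The intermediate faces $\operatorname{conv}(v_{\zero},F,v_{G_1},\dots)$ with $F\neq\emptyset$ are not of the form $\Phi(\mathcal{C},F)$ with $F\subseteq F_1$ at all.

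\emph{The induction step is not carried out.} You leave it explicitly as ``the main obstacle''. The facet description offered as a plan concerns the final polytope, not $\mathcal{Q}_j$. Even there it omits facets such as the base $\Phi(\emptyset,\mathcal{P})$.

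\emph{The end-state bookkeeping is wrong.} $\zero$ is not ``always present'' in the chains; it lies in $\mathcal{C}$ exactly for faces through the apex. Dropping it would identify the vertex $v_e=\Phi(\{e\},\emptyset)$ with the edge $\operatorname{conv}(v_{\zero},v_e)=\Phi(\{\zero,e\},\emptyset)$.

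Your stage-by-stage strategy can work, but it needs the right invariant. Let $\mathcal{G}_j$ be the set of faces subdivided so far. The claim is that the proper faces of $\mathcal{Q}_j$ are the sets $\operatorname{conv}(F,v_{G_1},\dots,v_{G_m})$ and $\operatorname{conv}(v_{\zero},F,v_{G_1},\dots,v_{G_m})$. Here $G_1\subsetneq\dots\subsetneq G_m$ lie in $\mathcal{G}_j$, and $F\subseteq G_1$ when $m\geq1$. In the second family one also requires $F\notin\mathcal{G}_j\cup\{\mathcal{P}\}$; this is the formal version of your remark about $v_{\zero}\star G$.

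At the step for $G=G^{(j+1)}$, the dimension ordering forces the faces containing $v_{\zero}\star G$ to be exactly the second-family faces with $F=G$. You wrote only $F\supseteq G$, which is not sharp enough. The cones $v_G\star K$ are then exactly the new pairs with $G_1=G$; for apex faces, $F\subsetneq G$ is automatically still unsubdivided. This closes the induction, and at the end only $F=\emptyset$ survives in the second family.

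Completed this way, your route would be a uniform alternative to the paper's argument. The paper splits the faces differently:
\begin{itemize}
\item faces missing the base are handled by comparison with the full barycentric subdivision of $\partial\operatorname{Pyr}(\mathcal{P})$;
\item faces meeting the base are shown to avoid the apex, with $F\subseteq G_1\subsetneq\dots\subsetneq G_m$ derived by induction on $m$;
\item each such face is constructed explicitly through the intermediate faces $(v_{\zero}\star G_j)\star v_{G_{j+1}}\star\dots\star v_{G_m}$.
\end{itemize}
As submitted, your proposal supplies neither argument.
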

    The map $\Phi$ induces a labeling of the faces of the dual stellahedral transform. In particular, the vertices of $\DStell(\mathcal{P})$ and the rank-$1$ elements of $S$ are such that
    \begin{itemize}
        \item the vertices $v$ of $\mathcal{P} \subseteq \DStell(\mathcal{P})$ are in bijection with the pairs $(\emptyset, v)$ for  all vertices $v \in \mathcal{P}$;
        \item the apex $v_{\oldhat{0}}$ of the pyramid over $\mathcal{P}$ is the image of the element $(\{{\zero}\},{\zero})$;
        \item the vertices $v_F$ are in bijection with the pairs $(\{F\},{\zero})$ for all faces $F \in \mathcal{F}(\mathcal{P}) \setminus\{\mathcal{P}\}$.
    \end{itemize}   
    If $\mathcal{P}$ is a square, we may see in Figure \ref{figure:vertices_St(P)} its vertex labels.
    
    \begin{figure}[ht!]
    \centering
    \begin{tikzpicture}
    \begin{scope}[line width =1.2pt,scale=0.8,
    vertex/.style={circle,fill,inner sep=2pt}]
    \tikzstyle help lines=[color=blue!50,very thin]
    \draw (0,0) -- (5,0) -- (5,5) -- (0,5) -- (0,0);
    % Symbols
    \node at (7.35, 2.5) {{\huge $\longrightarrow$}};

    % FIRST POLYTOPE
    \coordinate (Q1) at (5,0);
    \coordinate (Q2) at (5,5);
    \coordinate (Q3) at (0,5);
    \coordinate (Q4) at (0,0);

    \coordinate (E12) at (5,2.5);
    \coordinate (E23) at (2.5,5);
    \coordinate (E34) at (0,2.5);
    \coordinate (E14) at (2.5,0);

    \node[vertex, label=below:{\contour{white}{$v_1$}}] at (Q1) {};
    \node[vertex, label=above:{\contour{white}{$v_2$}}] at (Q2) {};
    \node[vertex, label=above:{\contour{white}{$v_3$}}] at (Q3) {};
    \node[vertex, label=below:{\contour{white}{$v_4$}}] at (Q4) {};

    \node[label=right:{\contour{white}{$e_{12}$}}] at (E12) {};
    \node[label=above:{\contour{white}{$e_{23}$}}] at (E23) {};
    \node[label=left:{\contour{white}{$e_{34}$}}] at (E34) {};
    \node[label=below:{\contour{white}{$e_{14}$}}] at (E14) {};
    \end{scope}
    \tdplotsetmaincoords{70}{108} % Adjust view angle
    \begin{scope}[tdplot_main_coords, 
    scale=2.8, 
    xshift=3.5cm,
    vertex/.style={circle,fill,inner sep=2pt}]
    \tikzstyle help lines=[color=blue!50, thin]

    % ---- Coordinates ----
    % Base vertices
    \coordinate (B1) at (1.0, 1.0, 0.0);
    \coordinate (B2) at (-1.0, 1.0, 0.0);
    \coordinate (B3) at (-1.0, -1.0, 0.0);
    \coordinate (B4) at (1.0, -1.0, 0.0);

    % Apex
    \coordinate (A)  at (0.0, 0.0, 2.0);

    % Stellar subdivision of the faces (Facets)
    \coordinate (F1) at (0.0, 0.8666666667, 0.8666666667);
    \coordinate (F2) at (-0.8666666667, 0.0, 0.8666666667);
    \coordinate (F3) at (0.0, -0.8666666667, 0.8666666667);
    \coordinate (F4) at (0.8666666667, 0.0, 0.8666666667);

    % Stellar subdivision of the edges
    \coordinate (E1) at (0.55, 0.55, 1.1);
    \coordinate (E2) at (-0.55, 0.55, 1.1);
    \coordinate (E3) at (-0.55, -0.55, 1.1);
    \coordinate (E4) at (0.55, -0.55, 1.1);

    % Red Face Label
    \coordinate (RF) at (1, -0.2, -0.1);

    % ---- Base Edges ----
    % Base back edges
    \draw[thick] (B4) -- (B1) -- (B2);
    \draw[dotted] (B2) -- (B3) -- (B4);

    % Hidden Face/Edge nodes to Base
    \draw[dotted] (B2) -- (F2);
    \draw[dotted] (B3) -- (F2);
    \draw[dotted] (B3) -- (F3);
    \draw[dotted] (B3) -- (E3);
    % Visible Face/Edge nodes to Base
    \draw[thick] (B1) -- (F1);
    \draw[thick] (B1) -- (F4);
    \draw[thick] (B1) -- (E1);
    \draw[thick] (B2) -- (F1);
    \draw[thick] (B2) -- (E2);    
    \draw[thick] (B4) -- (F3);
    \draw[thick] (B4) -- (F4);
    \draw[thick] (B4) -- (E4);

    % Hidden Face nodes to Edge nodes
    \draw[dotted] (F2) -- (E2);
    \draw[dotted] (F2) -- (E3);
    \draw[dotted] (F3) -- (E3);
    % Visible Face nodes to Edge nodes
    \draw[thick] (F1) -- (E1);
    \draw[thick] (F1) -- (E2);
    \draw[thick] (F3) -- (E4);
    \draw[thick] (F4) -- (E4);
    \draw[thick] (F4) -- (E1);

    % Hidden Edge/Face nodes to Apex
    \draw[dotted] (F2) -- (A);    
    \draw[dotted] (E3) -- (A);    
    % Visible Edge/Face nodes to Apex
    \draw[thick] (E1) -- (A);
    \draw[thick] (F1) -- (A);
    \draw[thick] (E2) -- (A);
    \draw[thick] (F3) -- (A);
    \draw[thick] (E4) -- (A);
    \draw[thick] (F4) -- (A);

    % ---- Draw Vertices ----
    \node[vertex, label=below:{\contour{white}{\small $ \emptyset\,  v_1$}}] at (B1) {};
    \node[vertex, label=below:{\contour{white}{\small $\emptyset\, v_2$}}] at (B2) {};
    \node[vertex, label=below:{\contour{white}{\small $\emptyset\, v_3$}}] at (B3) {};
    \node[vertex, label=below:{\contour{white}{\small $\emptyset\, v_4$}}] at (B4) {};
    
    \node[vertex, label=above:{\small $ \{{\zero}\}{\zero}$}] at (A) {};
    
    \node[vertex, label=right:{\contour{white}{\small $\left\{ e_{12}\right\}{\zero}$}}] at (F1) {};
    \node[vertex, label=above:{\contour{white}{\small $\left\{ e_{23}\right\}{\zero}$}}] at (F2) {};
    \node[vertex, label=left:{\contour{white}{\small $\left\{ e_{34}\right\}{\zero}$}}] at (F3) {};
    \node[vertex, label=below:{\contour{white}{\small $\left\{ e_{14}\right\}{\zero}$}}] at (F4) {};
    
    \node[vertex, label=below:{\contour{white}{\small $\left\{ v_{1}\right\}{\zero}$}}] at (E1) {};
    \node[vertex, label=right:{\contour{white}{\small $\left\{ v_{2}\right\}{\zero}$}}] at (E2) {};
    \node[vertex, label=above:{\contour{white}{\small $\left\{ v_{3}\right\}{\zero}$}}] at (E3) {};
    \node[vertex, label=right:{\contour{white}{\small $\left\{ v_{4}\right\}{\zero}$}}] at (E4) {};

    \end{scope}
\end{tikzpicture}
    \caption{\label{figure:vertices_St(P)} Vertices of the dual stellahedral transform of a square.}
\end{figure}
The dual stellahedral transform of a polytope always contains a face that is affinely equivalent to the polytope itself, since we do not subdivide any face of the {base} of the pyramid. In Figure \ref{figure:faces_St(P)}, we may see some labels of faces of $\DStell(\mathcal{P})$. Notice that the faces that do not intersect the base are all simplices. 
\begin{figure}[ht!]
    \centering
    \begin{tikzpicture}
    \tdplotsetmaincoords{70}{108} % Adjust view angle
    \begin{scope}[tdplot_main_coords, 
    scale=3, 
    vertex/.style={circle,fill,inner sep=2pt}]
    \tikzstyle help lines=[color=blue!50, thin]

    % ---- Coordinates ----
    % Base vertices
    \coordinate (B1) at (1.0, 1.0, 0.0);
    \coordinate (B2) at (-1.0, 1.0, 0.0);
    \coordinate (B3) at (-1.0, -1.0, 0.0);
    \coordinate (B4) at (1.0, -1.0, 0.0);

    % Apex
    \coordinate (A)  at (0.0, 0.0, 2.0);

    % Stellar subdivision of the faces (Facets)
    \coordinate (F1) at (0.0, 0.8666666667, 0.8666666667);
    \coordinate (F2) at (-0.8666666667, 0.0, 0.8666666667);
    \coordinate (F3) at (0.0, -0.8666666667, 0.8666666667);
    \coordinate (F4) at (0.8666666667, 0.0, 0.8666666667);

    % Stellar subdivision of the edges
    \coordinate (E1) at (0.55, 0.55, 1.1);
    \coordinate (E2) at (-0.55, 0.55, 1.1);
    \coordinate (E3) at (-0.55, -0.55, 1.1);
    \coordinate (E4) at (0.55, -0.55, 1.1);

    % Red Face Label
    \coordinate (RF) at (1, -1, 0.6);
    \coordinate (RF') at (1, 2, 0.75);
    \coordinate (RF'') at (1.3, -0.5, -0.1);

    % ---- Base Edges ----
    % Base back edges
    \draw[thick] (B4) -- (B1) -- (B2);
    \draw[dotted] (B2) -- (B3) -- (B4);

    % Hidden Face/Edge nodes to Base
    \draw[dotted] (B2) -- (F2);
    \draw[dotted] (B3) -- (F2);
    \draw[dotted] (B3) -- (F3);
    \draw[dotted] (B3) -- (E3);
    % Visible Face/Edge nodes to Base
    \draw[thick] (B1) -- (F1);
    \draw[thick] (B1) -- (F4);
    \draw[thick] (B1) -- (E1);
    \draw[thick] (B2) -- (F1);
    \draw[thick] (B2) -- (E2);    
    \draw[thick] (B4) -- (F3);
    \draw[thick] (B4) -- (F4);
    \draw[thick] (B4) -- (E4);

    % Hidden Face nodes to Edge nodes
    \draw[dotted] (F2) -- (E2);
    \draw[dotted] (F2) -- (E3);
    \draw[dotted] (F3) -- (E3);
    % Visible Face nodes to Edge nodes
    \draw[thick] (F1) -- (E1);
    \draw[thick, color=red!80] (F1) -- (E2);
    \draw[thick] (F3) -- (E4);
    \draw[thick] (F4) -- (E4);
    \draw[thick] (F4) -- (E1);

    % Hidden Edge/Face nodes to Apex
    \draw[dotted] (F2) -- (A);    
    \draw[dotted] (E3) -- (A);    
    % Visible Edge/Face nodes to Apex
    \draw[thick] (E1) -- (A);
    \draw[thick] (F1) -- (A);
    \draw[thick] (E2) -- (A);
    \draw[thick] (F3) -- (A);
    \draw[thick] (E4) -- (A);
    \draw[thick] (F4) -- (A);    

    % Face 0<v_1<e_14,v_1
    \draw[fill=red!50,fill opacity=0.75] (A) -- (E1) -- (F4) -- cycle;
    %\draw[fill=red!50,fill opacity=0.75] (F1) -- (E2);
    \draw[fill=red!50,fill opacity=0.75] (B1) -- (B4) -- (F4) --  cycle;%(F4) -- cycle;

    \node[color=red, label=below:{\color{BrickRed}\contour{white}{ $\{{\zero}<v_1<e_{14}\}\zero$}}] at (RF) {};
    \node[color=red, label=below:{\color{BrickRed}\contour{white}{ $\{v_2<e_{12}\}\zero$}}] at (RF') {};
    \node[color=red, label=below:{\color{BrickRed}\contour{white}{ $\{e_{14}\}e_{14}$}}] at (RF'') {};
    
    % --- Arrow to Barycenter ---

    \coordinate (FaceCenter) at (barycentric cs:A=1,E1=1,F4=1);
    \coordinate (FaceCenter') at (barycentric cs:F1=1,E2=1);
    \coordinate (FaceCenter'') at (barycentric cs:B1=1,B4=1,F4=1);

    \draw[-{Stealth[inset=1pt,scale length=1.5]}, thick, BrickRed] 
        (RF) .. controls +(0.4, 0.25, 0.5) and +(0.3, -0.2, 0.05) .. (FaceCenter);
    \draw[-{Stealth[inset=1pt,scale length=1.5]}, thick, BrickRed] 
        (RF') .. controls +(0.1, 0.25, 0.3) and +(0.1, 0.7, 0.03) .. (FaceCenter');      
    \draw[-{Stealth[inset=1pt,scale length=1.5]}, thick, BrickRed] 
        (RF'') .. controls +(0.4, 0.25, 0.5) and +(0.3, -0.2, 0.05) .. (FaceCenter'');

    % ---- Draw Vertices ----
    \node[vertex, label=below:{\contour{white}{\small $\emptyset v_1$}}] at (B1) {};

    \node[vertex, label=below:{\contour{white}{\small $\emptyset v_4$}}] at (B4) {};
    
    \node[vertex, label=above:{\small $\{{\zero}\}{\zero}$}] at (A) {};
    
    \node[vertex, label=right:{\contour{white}{\small $\{e_{12}\}{\zero}$}}] at (F1) {};
    \node[vertex, label=below:{\contour{white}{\small $\{e_{14}\}{\zero}$}}] at (F4) {};
    
    \node[vertex, label=below:{\contour{white}{\small $\{v_1\}{\zero}$}}] at (E1) {};
    \node[vertex, label=right:{\contour{white}{\small $\{v_{2}\}{\zero}$}}] at (E2) {};
    \end{scope}
\end{tikzpicture}
    \caption{\label{figure:faces_St(P)} Faces of the dual stellahedral transform of a square.}
\end{figure}
\begin{proof}[Proof of Theorem \ref{thm:face_lattice_St(P)}]
        In order to prove that $\Phi$ is an isomorphism, it is sufficient to show that it is well-defined, bijective, and order-reflecting, since it is order-preserving by construction. 

Let $Q$ be any face of $\DStell(\mathcal{P})$ that is disjoint from the base $\mathcal{P}$. If one performs stellar subdivisions on all nonempty proper faces of the pyramid $\operatorname{Pyr}(\mathcal{P})$ in decreasing order of dimension, one would obtain a polytope whose boundary complex is barycentric subdivision of the boundary complex of $ \operatorname{Pyr}(\mathcal{P})$. In the construction of $\DStell(\mathcal{P})$ we only skip subdivisions of faces contained in the base, hence the poset structure of the faces disjoint from $\mathcal{P}$ does not change. Consequently, every face of $\DStell(\mathcal{P})$ disjoint from the base $\mathcal{P}$ is a simplex of the form
\[
v_{F_1} \star \dots \star v_{F_m}
\]
where $F_1 \subsetneq \dots \subsetneq F_m \neq {\one}$ is a chain of faces of $\mathcal{P}$. It follows that $\Phi$ is well-defined on the pairs $(\mathcal{C}, F) \in S$ such that $F = \zero$, which are in bijection with the faces that are disjoint from the base since $\Phi$ restricts to the function
\begin{align*}
      \left(\left\{F_1 \subsetneq \dots \subsetneq F_m\right\}, \zero \right) &\longmapsto v_{F_1} \star \dots \star v_{F_m}.
\end{align*}
Now, consider any proper face $Q$ of $\DStell(\mathcal{P})$ that does intersect the base $\mathcal{P}$. 
Every proper face containing both the apex and a proper face of $\mathcal{P}$ has been subdivided during the construction, so $Q$ cannot contain the apex $v_{\oldhat{0}}$. Therefore, $Q$ is the convex hull of a face $F \in \mathcal{F}(\mathcal{P})$ and a (possibly empty) set of vertices $v_{G_1},\dots,v_{G_m}$, for some non-empty proper faces  $G_1,\dots,G_m \in \mathcal{F}(\mathcal{P})$. 
Notice that all the faces of $\operatorname{Pyr}(\mathcal{P})$ not contained in the base have been removed and replaced with faces of the form $v_G \star  H $ where $G$ is a proper face of $\mathcal{P}$, and $H$ is a proper face of a face $H'$ containing $v_{\oldhat{0}}\star G$ but that does not itself contain $ v_{\oldhat{0}} \star  G$. 
We observe that $H'$ must satisfy $H' \cap \mathcal{P} = (v_{\oldhat{0}} \star  G) \cap \mathcal{P} = G$. Otherwise, during the subdivision step of $v_{\oldhat{0}} \star  G$, the intermediate polytope of the construction would contain as a subset the face $v_{\oldhat{0}} \star  G'$ for some $G \subsetneq G'$, which is impossible since the order of the subdivisions is by decreasing dimension. For this reason, we must have $F \subseteq G_i$ for every $i\in [m]$. Assume without loss of generality that $v_{G_1},\dots,v_{G_m}$ are ordered such that $i < j$ if and only if $v_{G_i}$ has been introduced later than $v_{G_j}$ in a subdivision step. We prove by induction on $m$ that $G_{1} \subsetneq \dots \subsetneq G_m$. If $m=1$, the statement is trivially true. If $m>1$, we notice that the face $Q$ must be $v_{G_1} \star H$ where $H$ is a face of $\DStell(\mathcal{P})$ and in particular it is $H=v_{G_2} \star \dots \star v_{G_m} \star F$ such that $G_2 \subsetneq \dots \subsetneq G_m$ by the induction hypothesis. This implies that there exists an edge of $\DStell(\mathcal{P})$ between $v_{G_1}$ and $v_{G_2}$, from which it follows that $G_1 \subsetneq G_2 \subsetneq \dots \subsetneq G_m$ by the previous discussion on faces that do not intersect the base. This implies that $Q = \Phi \left(\left(\left\{G_1 \subsetneq \dots \subsetneq G_m \right\},F\right)\right)$. 
Furthermore, we have that for any chain of faces $\emptyset \neq F \subseteq G_1 \subsetneq G_2 \subsetneq \dots \subsetneq G_m \subsetneq \mathcal{P} $ in $\mathcal{F}(\mathcal{P})$ with $m \geq 1$,  the set $Q= \operatorname{conv}(F, v_{G_1},\dots,v_{G_m})$ is a face of $\DStell(\mathcal{P})$. Indeed, let 
\[
Q_j := (v_{\oldhat0} \star G_j) \star v_{G_{j+1}} \star \cdots \star v_{G_m}
\]
for any $j=1,\dots,m$. For $j=m$, the set $Q_m = v_{\oldhat0} \star G_m$ is a face of $\operatorname{Pyr}(\mathcal{P})$, before having applied any stellar subdivision. For $j<m$, we see by induction that $Q_j$ is a face of the intermediate polytope obtained during the construction of $\DStell(\mathcal{P})$, just after applying a stellar subdivision to the face $\mathcal{v}_{\oldhat{0}}\star G_{j+1}$. The set $Q$ is obtained as a face of $\DStell(\mathcal{P})$ from $Q_1=  (v_{\oldhat0} \star G_1) \star v_{G_{2}} \star \cdots \star v_{G_m}$ after a stellar subdivision of the face $\mathcal{v}_{\oldhat{0}}\star G_1$, and remains unchanged during the rest of the construction of $\DStell(\mathcal{P})$.
Hence, we have a bijection between the faces that intersect the base and the pairs $(\mathcal{C}, F)$ of $S$ such that $F \neq \emptyset$. 
Ultimately, we have that $\Phi$ is a bijection such that
  \begin{align*}
        \Phi: S \sqcup \{\one_S\} &\to  \mathcal{F}(\DStell(\mathcal{P}))   \\
        \{(\mathcal{C}, F) \in S \mid F=\emptyset\} &\leftrightarrow \{F \in \mathcal{F}(\DStell(\mathcal{P}))  \mid F \cap \mathcal{P}=\emptyset\} \\
        \{(\mathcal{C}, F) \in S \mid F\neq\emptyset\} &\leftrightarrow \{F \in \mathcal{F}(\DStell(\mathcal{P}))  \mid F \cap \mathcal{P}\neq\emptyset\} \setminus\{\DStell(\mathcal{P})\} \\
        \one_S &\leftrightarrow \one_{\DStell(\mathcal{P})},
    \end{align*}
Face inclusion is order-reflecting by vertex labels, which concludes the proof.
\end{proof}
The previous theorem proves that the stellahedral transform preserves the property of a poset of being the face poset of a polytope. In particular, we have the following corollary.
\begin{corollary}\label{cor:stellahedral_poset_polytope}
    Let $\mathcal{P}$ be a polytope. Then $\mathcal{F}(\DStell(\mathcal{P})) \cong \Stell(\mathcal{F}(\mathcal{P}^{\ast}))$.
\end{corollary}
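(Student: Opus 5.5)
By the preceding theorem (Theorem~\ref{thm:face_lattice_St(P)}), $\mathcal{F}(\DStell(\mathcal{P}))\cong S\sqcup\{\one_S\}$. So it suffices to exhibit an order isomorphism
\[
S\sqcup\{\one_S\}\;\cong\;\Stell(\mathcal{F}(\mathcal{P}^*)).
\]
Set $Q:=\mathcal{F}(\mathcal{P}^*)$. By polar duality, $Q\cong\mathcal{F}(\mathcal{P})^{\mathrm{op}}$, and I will identify the two. Under this identification the minimum $\zero_Q$ is the polytope $\mathcal{P}$ itself and the maximum $\one_Q$ is the empty face. Everything then reduces to unwinding the definition of $\Stell(Q)$ and comparing it with the definition of $S$.

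First, consider the chains of $Q$ anchored at $\zero_Q$. A chain $\zero_Q=c_0<_Q c_1<_Q\dots<_Q c_k$ is a chain of faces $\mathcal{P}\supsetneq c_1\supsetneq\dots\supsetneq c_k$. Removing $c_0=\mathcal{P}$ therefore gives a bijection onto the (possibly empty) strict chains $\mathcal{C}=\{F_1\subsetneq\dots\subsetneq F_m\}$ in $\mathcal{F}(\mathcal{P})\setminus\{\mathcal{P}\}$, with $F_1=c_k$ and $m=k$. This bijection respects inclusion of chains.

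Second, consider the condition $c_k\le_Q y$ in the definition of $\Stell(Q)$. It reads $y\subseteq c_k=F_1$ as faces of $\mathcal{P}$, which is exactly condition (iii) in the definition of $S$. When the chain is just $\{\zero_Q\}$, the condition is $\mathcal{P}\le_Q y$, which is vacuous; this matches the case $\mathcal{C}=\emptyset$ of $S$, where $F$ is arbitrary. The empty face is allowed as $y$: it is $\one_Q$, and it corresponds to the faces of $\DStell(\mathcal{P})$ disjoint from the base.

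Third, compare the orders. In $\Stell(Q)$ we have $(c,y)\le(c',y')$ if and only if $c\subseteq c'$ and $y\ge_Q y'$. The second condition means $y\subseteq y'$ as faces of $\mathcal{P}$. This is precisely the order $(\mathcal{C},F)\le(\mathcal{C}',F')$ on $S$.

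Hence the map $(c,y)\mapsto(c\setminus\{\mathcal{P}\},\,y)$, together with $\one_{\Stell}\mapsto\one_S$, is an order isomorphism $\Stell(Q)\to S\sqcup\{\one_S\}$. Composing it with $\Phi$ from the preceding theorem gives $\mathcal{F}(\DStell(\mathcal{P}))\cong\Stell(\mathcal{F}(\mathcal{P}^*))$.

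I do not expect any step to be an obstacle; the only point needing care is bookkeeping the dualization. In particular one must check two things: that anchoring at $\zero_Q$ corresponds to the chains in $S$ excluding $\mathcal{P}$, and that the empty face, which is $\one_Q$, is a legitimate value of $y$. As a sanity check, the rank formulas of Lemma~\ref{lm: rank} should agree with face dimensions. For instance, $(\{\zero_Q\},\one_Q)$ has rank $0$, matching the empty face of $\DStell(\mathcal{P})$, and the top element has rank $\dim\mathcal{P}+2$.
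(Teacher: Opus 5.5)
Your proof is correct and matches the paper's reasoning. The paper gives no separate proof of this corollary and treats it as an immediate consequence of Theorem~\ref{thm:face_lattice_St(P)}; the only content is the identification $\mathcal{F}(\mathcal{P}^*)\cong\mathcal{F}(\mathcal{P})^{\mathrm{op}}$, which turns $\Stell(\mathcal{F}(\mathcal{P}^*))$ into $S\sqcup\{\one_S\}$, and you carry this out correctly, including the case $c_k=\one_Q$, where $\mathcal{C}$ contains the empty face, $y$ is forced to be empty, and the corresponding faces pass through the apex $v_{\zero}$.
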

For any $n$-dimensional polytope $\mathcal{P}$ we define its \textit{$f$-polynomial} $f_{\mathcal{P}}(x)$ as
\[
f_{\mathcal{P}}(x) := \sum_{i=0}^n |\{F \in \mathcal{F}(\mathcal{P})\mid \dim(F)=i\}|\cdot x^i.
\]
It is possible to calculate the $f$-polynomial of the dual stellahedral transform with the following result. 
\begin{proposition}
    Let $\mathcal{P} $ be an $n$-dimensional polytope. Then the $f$-polynomial of $\DStell(\mathcal{P})$ is
    \[
    f_{\DStell(\mathcal{P})}(x) = x^{n+1} + x^n + 1 + \sum_{\substack{I \subseteq \{0,\dots,n-1\}\\I \neq \emptyset} } f_{\mathcal{P}}(I) x^{|I|-1}(x+1)(x^{\min(I)}+1)
    \]
    where $f_{\mathcal{P}}$ denotes the flag $f$-vector of the polytope $\mathcal{P}$.
\end{proposition}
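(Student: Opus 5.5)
The plan is to count faces directly, using the combinatorial description of $\mathcal{F}(\DStell(\mathcal{P}))$ in Theorem~\ref{thm:face_lattice_St(P)}. The whole polytope contributes $x^{n+1}$. Every proper face is $\Phi(\mathcal{C},F)=\operatorname{conv}(F,v_{F_1},\dots,v_{F_m})$ for a pair $(\mathcal{C},F)\in S$. Here $F$ is a face of $\mathcal{P}$, possibly empty. The chain $\mathcal{C}=\{F_1\subsetneq\dots\subsetneq F_m\}$ lies in $\mathcal{F}(\mathcal{P})\setminus\{\mathcal{P}\}$ and may contain the empty face $\zero$, whose vertex $v_{\zero}$ is the apex; moreover $F\subseteq F_1$.

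The first step is to record the dimension of each face. From the proof of Theorem~\ref{thm:face_lattice_St(P)}, the faces avoiding the base are simplices $v_{F_1}\star\dots\star v_{F_m}$ of dimension $m-1$. When $F\neq\emptyset$, the face is an iterated pyramid $F\star v_{F_1}\star\dots\star v_{F_m}$ of dimension $\dim F+m$. This is visible from the inductive construction via the faces $Q_j$ in that proof. It can be checked on the examples: $\{e_{14}\}e_{14}$ is a triangle, and $\{v_2<e_{12}\}\zero$ is an edge.

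Next, I would group the pairs $(\mathcal{C},F)$ by the flag $\mathcal{G}$ formed by the \emph{nonempty proper} faces appearing in $\{F\}\cup\mathcal{C}$. Let $I\subseteq\{0,\dots,n-1\}$ be its set of dimensions. Since $F\subseteq F_1$, if $F$ is nonempty and proper then $F$ is the minimum of $\mathcal{G}$, of dimension $\min(I)$. Pairs not covered by any nonempty $I$ are handled separately:
\begin{itemize}
\item $F=\mathcal{P}$ with $\mathcal{C}=\emptyset$ gives the base facet, contributing $x^n$.
\item $F=\emptyset$ with $\mathcal{C}=\{\zero\}$ gives the apex, contributing $1$.
\item $F=\emptyset$ with $\mathcal{C}=\emptyset$ gives the empty face, which the $f$-polynomial does not count.
\end{itemize}

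For a fixed flag $\mathcal{G}$ with dimension set $I\neq\emptyset$, there are exactly four pairs:
\begin{enumerate}[\normalfont(a)]
\item $F=\emptyset$, $\mathcal{C}=\mathcal{G}$, of dimension $|I|-1$;
\item $F=\emptyset$, $\mathcal{C}=\{\zero\}\cup\mathcal{G}$, of dimension $|I|$;
\item $F=\min\mathcal{G}$, $\mathcal{C}=\mathcal{G}$, of dimension $\min(I)+|I|$;
\item $F=\min\mathcal{G}$, $\mathcal{C}=\mathcal{G}\setminus\{\min\mathcal{G}\}$, of dimension $\min(I)+|I|-1$.
\end{enumerate}
Case (d) includes the base faces themselves when $|I|=1$. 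These four contributions sum to
\[
x^{|I|-1}(1+x)+x^{\min(I)+|I|-1}(1+x)=x^{|I|-1}(x+1)\bigl(x^{\min(I)}+1\bigr).
\]
Summing over flags, the number of flags with dimension set $I$ is $f_{\mathcal{P}}(I)$, which gives the stated formula.

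The main obstacle is justifying that these four-way options are exhaustive and non-overlapping. In particular, one must check the following:
\begin{itemize}
\item A nonempty $F$ can never coexist with $\zero\in\mathcal{C}$. This holds because $\zero\in\mathcal{C}$ forces $F_1=\zero$, hence $F=\emptyset$; geometrically, faces meeting the base do not contain the apex.
\item A nonempty proper $F$ either equals $F_1$ or is strictly contained in $F_1$, and in the latter case $F$ becomes the new minimum of $\mathcal{G}$.
\end{itemize}
Both facts follow from the membership conditions for $S$ and the bijectivity of $\Phi$ in Theorem~\ref{thm:face_lattice_St(P)}. Beyond them, I would only double-check the dimension formula for faces meeting the base.
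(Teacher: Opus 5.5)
Your proposal is correct and is essentially the paper's own proof: the paper likewise attaches to each nonempty chain $C_1\subsetneq\dots\subsetneq C_k$ of nonempty proper faces of $\mathcal{P}$ exactly your four pairs (a)--(d), contributing $x^{k-1}+x^{k}+x^{\dim C_1+k}+x^{\dim C_1+k-1}$, and then adds $x^{n+1}+x^n+1$ for the whole polytope, the base and the apex. The dimension check you flag at the end needs no geometry: since $\Phi$ is a poset isomorphism, $\dim\Phi(\mathcal{C},F)$ is the rank of $(\mathcal{C},F)$ in $S$ minus one, i.e.\ $\dim F+|\mathcal{C}|$ with $\dim\emptyset=-1$, because each cover in $S$ either raises $F$ by one rank or adds one element to $\mathcal{C}$.
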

\begin{proof}
Up to an isomorphism, we may work with the poset $S\sqcup\{\one_S\}$ as defined in Theorem \ref{thm:face_lattice_St(P)}. 
We count the nonempty faces of $\DStell(\mathcal{P})$ by taking non-empty chains $\{C_1\subsetneq   \dots\subsetneq C_k \}$ of nonempty proper faces of $\mathcal{P}$ and associating to each of them four different faces of $\DStell(\mathcal{P})$:
\begin{itemize}
    \item $(\{{\zero} \subsetneq C_1 \subsetneq...\subsetneq C_k\}, {\zero})$, which contributes $x^{k}$ to the $f$-polynomial of $\DStell(\mathcal{P})$;
    \item $(\{C_1 \subsetneq...\subsetneq C_k\}, {\zero})$, which contributes $x^{k-1}$ to the $f$-polynomial of $\DStell(\mathcal{P})$;
    \item $\left(\{C_1 \subsetneq...\subsetneq C_k\}, C_1\right)$, which contributes $x^{\dim(C_1)+k}$ to the $f$-polynomial of $\DStell(\mathcal{P})$;
    \item $\left(\{ C_2 \subsetneq...\subsetneq C_k\}, C_1\right)$, which contributes $x^{\dim(C_1)+k-1}$ to the $f$-polynomial of $\DStell(\mathcal{P})$.
    \end{itemize}
    In this way, we count exactly once all the nonempty faces of $\DStell(\mathcal{P})$, except for the whole polytope $\one_{\DStell(\mathcal{P})}$, the base $(\emptyset,\mathcal{P})$, and the apex $(\{{\zero}\},{\zero})$, which have been missed and contribute respectively $x^{n+1}$, $x^n$ and $1$ to the $f$-polynomial of $\DStell(\mathcal{P})$. Hence, by summing over all possible chains we get
    \begin{align*}
       f_{\DStell(\mathcal{P})}(x) &= x^{n+1} + x^n + 1 +\sum_{c \in \Delta\big(\overline{\mathcal{F}(\mathcal{P})}\big)\setminus\big\{\emptyset\big\}}\left( x^{|c|} +  x^{|c|-1}+ x^{|c|+\dim(\min c)} + x^{|c|-1+\dim(\min c)}\right)\\
       &= x^{n+1} + x^n + 1 +\sum_{c \in \Delta\big(\overline{\mathcal{F}(\mathcal{P})}\big)\setminus\big\{\emptyset\big\}} x^{|c|-1}(x+1)(x^{\operatorname{dim}(\operatorname{min}c)}+1) \\
        &=   x^{n+1} + x^n + 1 + 
    \sum_{\substack{I \subseteq \{0,\dots,n-1\}\\I \neq \emptyset} } f_{\mathcal{P}}(I) x^{|I|-1}(x+1)(x^{\min(I)}+1),
    \end{align*}
    where the last equality is obtained by grouping the chains with the same sequence of dimensions of their elements.
\end{proof}

\section{Stellahedral transformations and Chow theory}\label{sec:chow-theory}

Throughout this section we assume that $P$ is an Eulerian poset. All the KLS functions, Chow functions and augmented Chow functions that we will consider are those associated to the Eulerian kernel $\varepsilon$. 

\subsection{Augmented Chow polynomials}

The following key result characterizes particular values of the left KLS function of $\Stell(P)$ via the right KLS functions of $P$.  
For the remainder of this work, let $\tilde{\rho}$ denote the rank function of $\Stell(P)$, and let $\tilde{\varepsilon}$ be the Eulerian kernel in its incidence algebra. Correspondingly, we will use $\tilde{g}$ and $\tilde{h}$ to represent the toric $g$- and $h$-functions of $\Stell(P)$.

\begin{proposition}
For any $(c,y) \le (c',y')<{\one}_{\Stell(P)}$ in $\Stell(P)$, we have the following formula: $$\tilde{g}_{(c,y), (c',y')}(x)=f_{y'y}(x),$$ 
where $f$ is the right KLS function of $P$.
\label{prop: g of st}
\end{proposition}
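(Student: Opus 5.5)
The plan is to use uniqueness in Theorem~\ref{thm:kls_functions} applied to the interval $[(c,y),(c',y')]$ of $\Stell(P)$. The key structural observation is that, for $(c,y)\le(c',y')<\one_{\Stell(P)}$, this interval is a product. An element $(d,z)$ lies in it exactly when $c\subseteq d\subseteq c'$ and $y'\le z\le y$. The compatibility condition $\max d\le z$ is then automatic, since $\max d\le \max c'\le y'\le z$. Hence
\[
[(c,y),(c',y')]\;\cong\; [c,c']_{\text{chains}}\times [y',y]_P^{*}\;\cong\; B_{|c'|-|c|}\times [y',y]^{*},
\]
where $B_m$ is the Boolean lattice of rank $m$ and $^*$ denotes the dual poset. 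By Lemma~\ref{lm: rank}, the rank function $\tilde\rho$ is additive along this decomposition.

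Given this, the toric $g$-polynomial $\tilde g$ of the interval is the toric $g$-polynomial of a product $B_m\times Q$ with $Q=[y',y]^*$. I will use the standard fact that the toric $g$-polynomial is invariant under taking the product with a Boolean lattice (equivalently, under pyramids). Thus $g(B_m\times Q)=g(Q)$, because the toric $g$ of $B_1\times Q=\mathrm{Pyr}(Q)$ equals $g(Q)$, and one iterates. To stay within the paper's framework, I would verify this directly in the incidence algebra. The functions $g$ of $B_m\times Q$ and of $Q$ are compared using the fact that the Eulerian kernel of a product factors as $\tilde\varepsilon=\varepsilon_{B}\otimes\varepsilon_{Q}$, since $(x-1)^{a+b}=(x-1)^a(x-1)^b$.

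The approach is then a uniqueness argument. Define a candidate $\gamma$ on intervals of $B_m\times Q$ by $\gamma_{(S,u),(T,v)}:=g^{Q}_{uv}$, which is independent of the Boolean coordinates. Conditions (i$'$) and (ii$'$) hold trivially: the degree is less than $\tfrac12\rho_Q\le\tfrac12\tilde\rho$. For (iii$'$), compute
\[
(\gamma\tilde\varepsilon)=\Big(\sum_{S\subseteq U\subseteq T}(x-1)^{|T|-|U|}\Big)\,(g^{Q}\varepsilon_Q)_{uv}=x^{|T|-|S|}\,(g^Q)^{\rev}_{uv}.
\]
This equals $\gamma^{\rev}$ with respect to the total rank, so uniqueness gives $\tilde g=\gamma$.

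Finally, the toric $g$-function of the dual poset $Q=[y',y]^*$ is the right KLS function $f$ of $P$, as recalled in the preliminaries ("the right KLS polynomial arising from $\varepsilon$ equals the toric $g$-polynomial of $P^*$"). Applied intervalwise, this gives $g^{Q}_{\text{bottom},\text{top}}=f_{y'y}$. So the main argument reduces to the product decomposition and the Boolean-factor identity above.

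I expect the main obstacle to be the bookkeeping of the interval-level duality $g_{[y',y]^*}=f_{y'y}$. It requires checking that transposing intervals turns the relation $g^{\rev}=g\varepsilon$ into $f^{\rev}=\varepsilon f$, which is straightforward but must be done carefully with the order of multiplication. I also need to confirm that $\tilde g$ restricted to an interval is determined intrinsically by that interval, which holds by the uniqueness in Theorem~\ref{thm:kls_functions} applied to subintervals.
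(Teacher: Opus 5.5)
Your proposal is correct and is essentially the paper's argument. The paper also applies uniqueness from Theorem~\ref{thm:kls_functions} to the candidate $f_{y'y}$. Its verification of (iii$'$) is your factorization: the sum over $(d,w)$ splits into the Boolean sum $\sum_{d\in[c,c']}(x-1)^{|c'|-|d|}=x^{|c'|-|c|}$ times $(\varepsilon f)_{y'y}=f^{\rev}_{y'y}$.

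There are two differences. The paper uses $f^{\rev}=\varepsilon f$ directly instead of detouring through the toric $g$-function of $[y',y]^*$. Your explicit check that $\max d\le\max c'\le y'\le z$ makes the interval a product, and this justifies the splitting of the sum that the paper performs without comment.
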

\begin{proof}
We start by defining the function:
$$ z_{(c,y), (c',y')}(x) = 
\begin{cases} 
f_{y'y}(x) & \text{if } (c',y')\neq {\one}_{\Stell(P)}
\\ 
\tilde{g}_{(c,y), {\one}_{\Stell(P)}}(x) & \text{otherwise} 
\end{cases} $$
 KLS functions are determined interval by interval. By Theorem~\ref{thm:kls_functions} and the uniqueness of KLS functions, in order to show the validity of our formula, it suffices to show that $z$ satisfies the following three properties on intervals ending below the top:
    \begin{enumerate}[(i')]
        \item\label{it:first} $z_{(c,y),(c,y)} = 1$,
        \item\label{it:second} $\deg{z_{(c,y),(c',y')}(x)} < \frac{1}{2} \tilde{\rho}_{(c,y),(c',y')}$ for any $(c,y) < (c',y')$, and
        \item\label{it:third} $z^{\rev} = z\cdot\tilde{\varepsilon}$ whenever $(c',y')<{\one}_{\Stell(P)}$.
    \end{enumerate}
Note that \ref{it:first} is immediate from the fact that $f$ is the KLS function of $P$, hence $z_{(c,y), (c,y)}(x) = f_{yy}(x) = 1$.

Similarly, \ref{it:second} follows again from the fact that $f$ is the KLS function of $P$. Suppose $(c,y) < (c',y')$. We have $\deg(z) = \deg(f_{y'y}(x))$. If $y' < y$ we have 
$$\deg(f_{y'y}(x)) < \frac{1}{2}\rho_{y'y} \leq \frac{1}{2}\tilde{\rho}_{(c,y), (c',y')}. $$
On the other hand if $y' = y$ we must have $c \subsetneq c'$, so $|c'| - |c| \ge 1$ and we have
$$
\deg(f_{yy}(x)) = 0 < \frac{|c'|-|c|}{2} = \frac{1}{2}\tilde{\rho}_{(c,y), (c',y')}.
$$
Now we prove \ref{it:third}:
\begin{align*}
(z \cdot \tilde{\varepsilon})_{(c,y), (c',y')} &= 
\sum_{(d,w) \in [(c,y), (c',y')]} z_{(c,y), (d,w)}(x) \cdot \tilde{\varepsilon}_{(d,w), (c',y')}(x) \\
&= \sum_{(d,w) \in [(c,y), (c',y')]} f_{wy}(x) (x-1)^{\rho_{y'w} + |c'| - |d|} \\
&= \sum_{w \in [y', y]} \sum_{d \in [c, c']} f_{wy}(x) (x-1)^{\rho_{y'w} + |c'| - |d|} \\
&= \left( \sum_{w \in [y', y]} f_{wy}(x) (x-1)^{\rho_{y'w}} \right) \left( \sum_{d \in [c, c']} (x-1)^{|c'| - |d|} \right).
\end{align*}
The first factor is the convolution $( \varepsilon\cdot f)_{y'y}$, which equals $f_{y'y}^{\rev}(x)$ since $f$ is the right KLS function of $P$. The second factor is the convolution of $\zeta$ and the Eulerian kernel in the boolean poset $[c, c']$, which equals $\zeta^{\rev}$ in the boolean poset, evaluating to $x^{|c'|-|c|}$. 
Thus,
\[
(\varepsilon\cdot f)_{y'y}x^{|c'|-|c|} = x^{\rho_{y'y}+|c'|-|c|} f_{y'y}(x^{-1}) = z^{\rev}_{(c,y), (c',y')}. \qedhere
\]
\end{proof}

Now, having the above formula for the left KLS function of $\Stell(P)$, we can prove Theorem~\ref{thm:augChow_is_toric-h_of_Stell}.

\begin{theorem}
Let $P$ be an Eulerian poset. The right augmented Chow polynomial of $P$ equals the toric $h$-polynomial of $\Stell(P)$:
$$
F_{P}(x)=h_{\Stell(P)}(x).
$$
\end{theorem}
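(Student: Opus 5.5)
We compute $\tilde g_{\Stell(P)}=\tilde g_{(\{\zero\},\one),\one_{\Stell}}$ from the defining relation $\tilde g^{\rev}=\tilde g\cdot\tilde\varepsilon$ on the full interval, using Proposition~\ref{prop: g of st} for all intervals ending below the top. Write $\hat 0_S=(\{\zero\},\one)$ and $\tilde r=n+1$ for the rank of $\Stell(P)$, where $n=\rho_{\zero\one}$. Isolating the top term gives
\[
\tilde g^{\rev}_{\Stell(P)}(x)-\tilde g_{\Stell(P)}(x)=\sum_{(d,w)\in\Stell(P)\setminus\{\one_{\Stell}\}} f_{w\one}(x)\,(x-1)^{\tilde\rho_{(d,w),\one_{\Stell}}}.
\]
By Lemma~\ref{lm: rank}, the exponent is $\rho_{\zero w}-|d|+2$. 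Dividing by $x-1$ then yields
\[
h_{\Stell(P)}(x)=\sum_{w\in P}f_{w\one}(x)\sum_{d}(x-1)^{\rho_{\zero w}-|d|+1},
\]
where $d$ ranges over chains $\zero=d_0<\dots<d_k\le w$.

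Next, evaluate the inner sum over chains. Split according to whether $\max d=w$ or $\max d<w$. The chains with $\max d<w$ are exactly the chains $d$ of $[\zero,w)$ that contain $\zero$. For $w\neq\zero$, adding or removing $w$ pairs the chains ending below $w$ with those ending at $w$, so the inner sum equals
\[
x\sum_{d\subseteq[\zero,w)}(x-1)^{\rho_{\zero w}-|d|},
\]
again with $d\ni\zero$. In terms of the proper part of $[\zero,w]$, this is $x\sum_{c\in\Delta((\zero,w))}(x-1)^{\rho_{\zero w}-1-|c|}$, which is $x$ times the chain-counting polynomial. The paper cites \cite[Theorem~5.4]{ferroni-matherne-vecchi} for the fact that this chain-counting polynomial is exactly $\H_{\zero w}(x)$, the Chow polynomial of $[\zero,w]$ (the $h$-polynomial of $\Delta((\zero,w))$). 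Before using it, I would check that the cited $h$-polynomial convention matches $\sum_c (x-1)^{\rho-1-|c|}$; this is a standard $f$-to-$h$ conversion. The case $w=\zero$ is degenerate: there the inner sum is $(x-1)^{1}$, with $d=\{\zero\}$, and it needs separate bookkeeping.

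Then compare with $F_P=(\H\cdot f^{\rev})_{\zero\one}=\sum_w \H_{\zero w}(x)\,x^{\rho_{w\one}}f_{w\one}(x^{-1})$. The expansion above is in $f$, not $f^{\rev}$, so the two do not match directly. I would use $f^{\rev}=\varepsilon f$ to rewrite $F_P=(\H\varepsilon f)_{\zero\one}$, and then show
\[
(\H\varepsilon)_{\zero w}=
\begin{cases}
x\,\H_{\zero w}, & w>\zero,\\
x-1, & w=\zero.
\end{cases}
\]
This identity should follow from $\H=-\bar\varepsilon^{-1}$ together with the relation $\varepsilon=\delta+(x-1)\bar\varepsilon+\dots$. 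Concretely, for $s<t$ one has $\varepsilon_{st}=(x-1)\bar\varepsilon_{st}$, while $\varepsilon_{ss}=1=-\bar\varepsilon_{ss}+2\delta_{ss}$. This gives $\varepsilon=(x-1)\bar\varepsilon+x\delta$, and hence
\[
\H\varepsilon=-(x-1)\delta+x\H.
\]
On $[\zero,w]$ with $w>\zero$ this is $x\H_{\zero w}$, and at $w=\zero$ it is $-(x-1)+x=1$.

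That value $1$ at $w=\zero$ conflicts with the value $x-1$ found above. So the main obstacle is careful bookkeeping of the degenerate terms: the $w=\zero$ case, chains whose maximum equals $w$ at small rank, and the normalization $\H_{ss}$. I would recheck this case first, because a discrepancy there signals an off-by-one in the rank formula of Lemma~\ref{lm: rank}. Once those terms are fixed, the two expansions agree term by term in $f_{w\one}$, which proves $F_P=h_{\Stell(P)}$.
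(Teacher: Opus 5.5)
Your route is the paper's route. You expand $\tilde g^{\rev}=\tilde g\cdot\tilde\varepsilon$ on the full interval, substitute Proposition~\ref{prop: g of st} and the corank formula of Lemma~\ref{lm: rank}, pair each chain $d\not\ni w$ with $d\cup\{w\}$ to get $x\H_{\zero w}$, and match the result against $(\H\varepsilon f)_{\zero\one}=(\H f^{\rev})_{\zero\one}=F_P$.

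The one genuine difference is how you obtain $\H\varepsilon=x\H+(1-x)\delta$. You derive it directly from $\varepsilon=(x-1)\bar\varepsilon+x\delta$ and $\H=-\bar\varepsilon^{-1}$, whereas the paper cites \cite[Prop.~3.6 and 3.15]{ferroni-matherne-vecchi}. Your derivation is correct and makes that step self-contained.

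Your worry about $h$-polynomial conventions also goes away. Since $\bar\varepsilon+\delta$ is strictly upper triangular with entries $(x-1)^{\rho_{st}-1}$, expand $\H=(\delta-(\bar\varepsilon+\delta))^{-1}$ as a geometric series. This gives exactly $\H_{st}=\sum_{c\in\Delta((s,t))}(x-1)^{\rho_{st}-1-|c|}$, with no appeal to \cite[Theorem~5.4]{ferroni-matherne-vecchi}.

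However, the proposal as written does not close. It stops at a ``conflict'' at $w=\zero$ and suggests an off-by-one in Lemma~\ref{lm: rank}. That conflict is an arithmetic slip on your part, not a real obstruction. For $w=\zero$ the only admissible chain is $d=\{\zero\}$, so the exponent $\rho_{\zero w}-|d|+1$ equals $0-1+1=0$. The term is therefore $f_{\zero\one}$, not $f_{\zero\one}(x-1)$. Equivalently, $(\{\zero\},\zero)$ is a coatom of $\Stell(P)$, so it contributes $f_{\zero\one}(x-1)^1$ to $\tilde g^{\rev}-\tilde g$, and dividing by $x-1$ leaves $f_{\zero\one}$; you seem to have skipped that division.

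This agrees with the value $(\H\varepsilon)_{\zero\zero}=-(x-1)+x=1$ that you computed correctly, so your case display claiming $x-1$ at $w=\zero$ should read $1$. The rank formula is fine. Your other degenerate cases are also fine: for an atom $w$ the inner sum is $(x-1)+1=x=x\H_{\zero w}$, since $\H_{\zero w}=1$. With this correction the two expansions agree term by term in $f_{w\one}$, and the proof is complete.
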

\begin{proof}
Let $n$ be the rank of $P$. By the definition of the toric $h$-polynomial, we have
\[
h_{\Stell(P)}(x) = \frac{ \tilde{g}^{\,\rev}_{(\{{\zero}\}, {\one}), {\one}_{\Stell(P)}} - \tilde{g}_{(\{{\zero}\}, {\one}), {\one}_{\Stell(P)}}}{x-1}. 
\]
Since $\tilde{g}^{\,\rev} = \tilde{g} \cdot \tilde{\varepsilon}$, we can write
\[
h_{\Stell(P)}(x) = \frac{1}{x-1} \sum_{(\{{\zero}\}, {\one}) \leq (d,y) < {\one}_{\Stell(P)}} \tilde{g}_{(\{{\zero}\}, {\one}), (d,y)}(x) (x-1)^{\tilde{\rho}_{(d,y), {\one}_{\Stell(P)}}}.
\]
From Lemma \ref{lm: rank}, we know that the co-rank of an arbitrary element $(d,y)$ is 
\[
\tilde{\rho}_{(d,y), {\one}_{\Stell(P)}} = \rho_{{\zero},y}-|d|+2,
\]
hence using Proposition \ref{prop: g of st} we have that the toric $h$-polynomial of $\Stell(P)$ is
\[
\sum_{(\{{\zero}\}, {\one}) \leq (d,y) < {\one}_{\Stell(P)}} f_{y{\one}}(x) (x-1)^{\rho_{{\zero}y} - |d|+1} .
\]
We can split the sum by isolating the case $y = {\zero}$. If $y = {\zero}$, the only valid chain is $d = \{{\zero}\}$, so $|d| = 1$ and $\rho_{{\zero}{\zero}} = 0$. The corresponding term evaluates to $f_{{\zero}{\one}}(x)$. For $y > {\zero}$, we split the sum over chains $d$ bounded by $y$ and containing ${\zero}$ into two sets: those that do not contain $y$ and those that contain $y$. Let $c = d \setminus \{{\zero}, y\}$, so $c$ is a face in the order complex $\Delta({\zero},y)$.
If $y \notin d$, then $|d| = |c| + 1$; if $y \in d$, then $|d| = |c| + 2$. Thus, we have
\begin{align*}
h_{\Stell(P)}(x) &= f_{{\zero}{\one}}(x) + \sum_{y > {\zero}} f_{y{\one}}(x) \sum_{c \in \Delta({\zero},y)} \left[ (x-1)^{\rho_{{\zero}y} - |c|} + (x-1)^{\rho_{{\zero}y} - |c| - 1} \right] \\
&= f_{{\zero}{\one}}(x) + \sum_{y > {\zero}} f_{y{\one}}(x) \sum_{c \in \Delta({\zero},y)} x(x-1)^{\rho_{{\zero}y} - |c| - 1}.
\end{align*}

Using the definition of the $h$-polynomial of simplicial complexes (where $\dim \Delta({\zero},y) = \rho_{{\zero}y}-2$) we can write:
\[
h_{\Stell(P)}(x) = f_{{\zero}{\one}}(x) + \sum_{y > {\zero}} f_{y{\one}}(x) x h_{\Delta({\zero},y)}(x) =f_{{\zero}{\one}}(x) + \sum_{y > {\zero}} f_{y{\one}}(x) x \H_{{\zero},y}(x).
\]
Now we use that $x \H_{s,t}(x)=\H^{\rev}_{s,t}(x)$ for every $s<t$, as proved during the proof of Proposition 3.6 of \cite{ferroni-matherne-vecchi}
\[
h_{\Stell(P)}(x) = f_{{\zero}{\one}}(x) + \sum_{y > {\zero}} f_{y{\one}}(x) \H^{\rev}_{{\zero},y}(x) =  \sum_{y\in P} f_{y{\one}} \H_{{\zero},y}^{\operatorname{rev}},
\]
and then we conclude using \cite[Prop. 3.6 and Prop. 3.15]{ferroni-matherne-vecchi}
\[
h_{\Stell(P)}(x) = (\H^{\operatorname{rev}}\cdot f)_P=(\H \cdot \varepsilon \cdot f)_P = (\H \cdot f^{\operatorname{rev}})_P = F_P.\qedhere
\]
\end{proof}

\subsection{Eulerian Chow Polynomial of the Stellahedral Transform}
This subsection is not essential for the rest of the article, but we consider it relevant in the study of Chow polynomials of Eulerian posets. The following can be viewed as a counterpart of \cite[Section~4]{ferroni-riccardi}.

According to \cite[Theorem~5.4]{ferroni-matherne-vecchi}, the Eulerian Chow polynomial of an Eulerian poset $P$ coincides with the $h$-polynomial of its proper order complex, $\Delta(\bar{P})$. As we demonstrate below, the $f$-vector of $\Delta(\bar{P})$ completely determines the $f$-vector of $\Delta\left(\overline{\Stell(P)}\right)$. Consequently, the Eulerian Chow polynomial of $P$ uniquely determines the Eulerian Chow polynomial of $\Stell(P)$.
\begin{theorem}
Let $P$ be an Eulerian poset of rank $r$. The Eulerian Chow polynomial $\H_{\Stell(P)}(x)$ is determined by the Eulerian Chow polynomial $\H_P(x)$ via the following formulas
\begin{align*}
\frac{\H_{\Stell(P)}(x)}{(1-x)^{r+1}} &= \sum_{k=0}^{\infty} N(k)x^k\\
&= \sum_{k=0}^\infty \zeta_P(k+1) P_k(x)
\end{align*}
where $P_k(x)$ is the polynomial defined by $P_k(x) := \sum_{m=0}^k x^m \frac{m^{k-m}}{(m+1)^{k-m+1}}$ and $N(k)$ is the value $N(k) := \left[\left((k+1)\delta-k\zeta)\right)^{-1}\cdot\zeta^{k+1}\right]_{{\zero}{\one}}$.
\end{theorem}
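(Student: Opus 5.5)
The plan is to reduce everything to counting multichains. By \cite[Theorem~5.4]{ferroni-matherne-vecchi}, applied to the Eulerian poset $\Stell(P)$ (Eulerian by the preservation theorem of Section~\ref{sec:Stellahedral_transform}), $\H_{\Stell(P)}(x)$ is the $h$-polynomial of $\Delta\bigl(\overline{\Stell(P)}\bigr)$. Since $\Stell(P)$ has rank $r+1$ by Lemma~\ref{lm: rank}, this complex has dimension $r-1$. I would then use the standard relation between the $h$-polynomial of the proper order complex of a bounded graded poset $Q$ of rank $n$ and its zeta polynomial:
\[
\sum_{k\ge 0}\zeta^k_{\zero\one}(Q)\,x^k=\frac{x\,h_{\Delta(\overline Q)}(x)}{(1-x)^{n+1}} .
\]
This is proved by writing $\zeta^k=\sum_j\binom{k}{j}(\zeta-\delta)^j$ and grouping strict chains by length. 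Apply it to $Q=\Stell(P)$ with $n=r+1$. Multiplying by $(1-x)/x$ and using $\zeta^{0}_{\zero\one}=0$, I get
\[
\frac{\H_{\Stell(P)}(x)}{(1-x)^{r+1}}=\sum_{k\ge0}\bigl(\zeta^{k+1}-\zeta^{k}\bigr)_{\zero\one}(\Stell(P))\,x^{k}.
\]
The coefficient $N(k)$ is therefore the number of multichains $a_1\le\dots\le a_k$ in $\Stell(P)\setminus\{\one_{\Stell}\}$: these are exactly the multichains of length $k+1$ from the bottom to $\one_{\Stell}$ whose last step before the top is strict, or more precisely the count of multichains ending in the top minus those ending in the top twice.

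The core step is to show that this count equals $\bigl[((k+1)\delta-k\zeta)^{-1}\zeta^{k+1}\bigr]_{\zero\one}$ computed in $\mathcal I(P)$. A multichain $(c_1,y_1)\le\dots\le(c_k,y_k)$ below the top consists of two pieces of data. The first is a multichain $\one\ge y_1\ge\dots\ge y_k$ in $P$. The second is a nested sequence of $\zero$-anchored chains $c_1\subseteq\dots\subseteq c_k$ with $\max c_k\le y_k$. I would encode the nested chains as the single chain $c_k\subseteq[\zero,y_k]$ together with an entry time $e(t)\in\{1,\dots,k\}$ for each $t\in c_k\setminus\{\zero\}$, namely the first index $i$ with $t\in c_i$. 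The constraint is that every element of $c_i$ must lie below $y_i$, which forces $e(t)$ to be at most the last index $i$ with $t\le y_i$.

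I would then organise the count as a convolution. Sum over the strict chain $\zero<t_1<\dots<t_m$, and at each $t_j$ record where it sits relative to the multichain of $y$'s. Expanding $((k+1)\delta-k\zeta)^{-1}=\sum_{j\ge0}\frac{k^j}{(k+1)^{j+1}}(\zeta-\delta)^j$ suggests the bookkeeping: the factor $(\zeta-\delta)^j$ selects the strict chain $t_1<\dots<t_j$, and each element contributes a weight that accounts for its entry time, while $\zeta^{k+1}$ accounts for the interleaved multichain of $y$'s. If the direct weighting does not close up, I would instead derive a recursion in $k$: first condition on $y_k$ and $c_k$, then peel off the last layer, and check that the resulting identity is equivalent to $((k+1)\delta-k\zeta)\,\mathbf N_k=\zeta^{k+1}$. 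Here $\mathbf N_k$ is the incidence-algebra element counting the analogous configurations on every interval. I expect this step to be the main obstacle, because getting the entry-time constraint $c_i\le y_i$ into exactly this matrix form requires care.

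The last equality should follow by expanding. Since $\zeta$ and $\delta$ commute, write $((k+1)\delta-k\zeta)^{-1}\zeta^{k+1}$ as a power series in $\zeta$ and use $\zeta^{j}_{\zero\one}=\zeta_P(j)$. This expresses $N(k)$ as a combination of zeta-polynomial values, and then I would interchange the summations over $k$ and $j$. Collecting the coefficient of $\zeta_P(k+1)$ should produce the rational weights $m^{k-m}/(m+1)^{k-m+1}$ that define $P_k(x)$; the exponent pattern matches the geometric expansion of $(m+1-m\zeta)^{-1}$ evaluated at level $m$. I would verify this bookkeeping on $P=C_1$ as a sanity check. Finally, because $\zeta_P$ is determined by the $f$-vector of $\Delta(\overline P)$ and hence by $\H_P(x)$, the formula shows that $\H_P$ determines $\H_{\Stell(P)}$.
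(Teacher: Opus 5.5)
Your outer reductions are sound and match the paper. The identity $\sum_k\zeta^k_{\zero\one}x^k=x\,h_{\Delta(\overline Q)}(x)/(1-x)^{n+1}$, applied to $\Stell(P)$, gives $\H_{\Stell(P)}(x)/(1-x)^{r+1}=\sum_k(\zeta^{k+1}-\zeta^k)_{\zero\one}x^k$. Its coefficients count multichains $a_1\le\dots\le a_k$ in $\Stell(P)\setminus\{\one_{\Stell}\}$. The last equality also follows as you say, by expanding $((k+1)\delta-k\zeta)^{-1}=\sum_j\frac{k^j}{(k+1)^{j+1}}\zeta^j$ and regrouping by $j+k$.

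However, the step the theorem actually rests on is left unproved: that this count equals $\bigl[((k+1)\delta-k\zeta)^{-1}\zeta^{k+1}\bigr]_{\zero\one}$. The setup you give for it would not close, for two reasons.

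\emph{The constraint is vacuous.} You treat the condition $\max c_i\le y_i$ as a genuine restriction on entry times, but it never restricts anything. The order on $\Stell(P)$ reverses the second coordinate, so $y_1\ge\dots\ge y_k$. Hence for every $t\in c_i\subseteq c_k$ one has $t\le\max c_k\le y_k\le y_i$. The entry times are therefore arbitrary elements of $\{1,\dots,k\}$, and a fixed top chain $c_k$ admits exactly $k^{|c_k|-1}$ nested sequences $c_1\subseteq\dots\subseteq c_k$.

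\emph{The expansion is wrong.} Your formula $((k+1)\delta-k\zeta)^{-1}=\sum_j\frac{k^j}{(k+1)^{j+1}}(\zeta-\delta)^j$ is false. Since $(k+1)\delta-k\zeta=\delta-k(\zeta-\delta)$, the correct expansion in powers of $\zeta-\delta$ is $\sum_j k^j(\zeta-\delta)^j$. The coefficients $\frac{k^j}{(k+1)^{j+1}}$ belong to the expansion in powers of $\zeta$. With your weights, each strict chain receives a non-integer weight for $k\ge1$, so the chain-by-chain interpretation you sketch cannot match a count of configurations. This is presumably why you expected to need a recursion, which you leave unspecified.

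With both corrections, the core step is a short computation, and this is the paper's argument. Condition on $y=\max c_k$. The chains $\zero=t_0<t_1<\dots<t_j=y$ are counted by $(\zeta-\delta)^j_{\zero y}$, and each carries weight $k^j$ from the free entry times. The multichains $y\le y_k\le\dots\le y_1\le\one$ are counted by $\zeta^{k+1}_{y\one}$. Therefore
\[
N(k)=\sum_{y\in P}\Bigl[\sum_{j\ge0}k^j(\zeta-\delta)^j\Bigr]_{\zero y}\zeta^{k+1}_{y\one}=\bigl[(\delta-k(\zeta-\delta))^{-1}\zeta^{k+1}\bigr]_{\zero\one}=\bigl[((k+1)\delta-k\zeta)^{-1}\zeta^{k+1}\bigr]_{\zero\one},
\]
which also holds for $k=0$. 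Without this argument, or a worked-out replacement, your proposal does not establish the first displayed equality of the theorem.
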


\begin{proof}
Recall that the zeta polynomial of a poset counts its multichains by size (see \cite[Section~3.12]{stanley-ec1}). We denote the zeta polynomial of a poset $Q$ by $\zeta_Q(n)$. Specifically, $\zeta_Q(n)$ counts the number of multichains $\{q_1 \leq \dots \leq q_{n-1}\}$ in $Q$ of size $n-1$.
From Theorem 5.4 of \cite{ferroni-matherne-vecchi}, we know that the Eulerian Chow polynomial of $\Stell(P)$ satisfies the identity
$$ \sum_{n=0}^\infty \zeta_{\Stell(P)}(n+1)x^n = \frac{\H_{\Stell(P)}(x)}{(1-x)^{r+2}} $$
since the rank of $\Stell(P)$ is $r+1$. Now we proceed to compute $\zeta_{\Stell(P)}(n+1)$. 
By definition, $\zeta_{\Stell(P)}(n+1)$ counts the number of multichains $\{z_1 \le z_2 \le \dots \le z_{n}\}$ of length $n$ in $\Stell(P)$. 
Since the maximal element is ${\one}_{\Stell(P)}$, any such multichain consists of $m$ elements strictly below ${\one}_{\Stell(P)}$ followed by $n-m$ copies of ${\one}_{\Stell(P)}$ for some $m \leq n$. The first $m$ elements in $ \Stell(P) \setminus \{{\one}_{\Stell(P)}\}$ form a multichain $(c^{(1)}, y^{(1)}) \le \dots \le (c^{(m)}, y^{(m)})$.
By the order relations in $\Stell(P)$, this implies that $\{y^{(m)} \le \dots \le y^{(1)}\}$ is a multichain in $P$ and $c^{(1)} \subseteq \dots \subseteq c^{(m)}$ are chains in $P$.

Assume first that $m>0$. Fix $c = c^{(m)}$ any strict chain $\{{\zero} = x_0 < x_1 < \dots < x_k \le y^{(m)}\}$. 
The condition $c^{(1)} \subseteq \dots \subseteq c^{(m)}$ can be seen as a choice of the birth time of each element of $c$. 
Hence, there are exactly $m^k$ such choices. 
Thus, the number of multichains in $ \Stell(P) \setminus \{{\one}_{\Stell}\}$ of length $m$ is given by
$$ N(m) = \sum_{c} m^{|c|-1} \sum_{y_m \ge \max c, \, y_m \le \dots \le y_1} 1. $$
Notice that the inner sum counts the number of multichains $\{y \le y_m \le \dots \le y_1 \le {\one}\}$ where $y = \max c$, which is exactly counted by $\zeta^{m+1}_{y {\one}}$ where $\zeta$ is the zeta function of $P$.

For the outer sum, fix an element $y$ to be the maximum of $c$. The number of strict chains of length $k$ between ${\zero}$ and $y$ is given by the strict incidence function $\eta^k_{{\zero}y}$, where $\eta=\zeta - \delta$. Hence, we have
\begin{align*}
    N(m) &= \sum_{y\in P} \left(\sum_{k=0}^{\infty} m^k \eta^k_{{\zero}y}\right)\zeta^{m+1}_{y{\one}} \\
    &=  \sum_{y\in P} (\delta-m\eta)^{-1}_{{\zero}y}\zeta^{m+1}_{y{\one}} \\
    &=\left[(\delta-m\eta)^{-1}\cdot\zeta^{m+1}\right]_{{\zero}{\one}},
\end{align*}
where in the second equality we used the algebraic identity $(\delta-f)^{-1}=\sum_{k=0}^{\infty}f^k$. The formula also holds for $m=0$.
Writing $\eta=\zeta-\delta$ and using the same algebraic identity, we obtain
$$
N(m)=\frac{1}{m+1}\left[ \left(\delta-\frac{m}{m+1}\zeta\right)^{-1}\cdot \zeta^{m+1}\right]_{{\zero}{\one}}= \sum_{j=0}^\infty\frac{m^j}{(m+1)^{j+1}}\zeta_P(j+m+1).
$$
Since we have $\zeta_{\Stell(P)}(n+1) = \sum_{m=0}^n N(m)$, we may substitute it into the formula of the Eulerian Chow polynomial of $\Stell(P)$:
\begin{align*}
    \frac{\H_{\Stell(P)}(x)}{(1-x)^{r+2}} &= \sum_{n=0}^\infty \zeta_{\Stell(P)}(n+1) x^n = \sum_{n=0}^\infty \sum_{m=0}^nN(m) x^n \\&=\sum_{m=0}^\infty N(m)x^m\sum_{n=m}^\infty x^{n-m} = \frac{1}{1-x}\sum_{m=0}^\infty N(m)x^m\\
    &= \frac{1}{1-x}\sum_{m=0}^\infty x^m \sum_{j=0}^\infty \frac{m^j}{(m+1)^{j+1}} \zeta_P(j+m+1). 
\end{align*}
Notice that by the second equality, we get the formula
\[
\frac{\H_{\Stell(P)}(x)}{(1-x)^{r+1}} = \sum_{k=0}^{\infty} N(k)x^k.
\]
Notice that in the last equality, if we let $k = j+m$, the inner sum ranges from $k=m$ to $\infty$. If we swap the order of summation, summing first over $k$ from $0$ to $\infty$ and then over $m$ from $0$ to $k$, we isolate $\zeta_P(k+1)$:
\[ \frac{\H_{\Stell(P)}(x)}{(1-x)^{r+1}} = \sum_{k=0}^\infty \zeta_P(k+1) \left( \sum_{m=0}^k x^m \frac{m^{k-m}}{(m+1)^{k-m+1}} \right) = \sum_{k=0}^\infty \zeta_P(k+1) P_k(x). \qedhere\]
\end{proof}

\section{Unimodality and \texorpdfstring{${\gamma}$}{gamma}-positivity}\label{sec:unimodality-gamma}

In this section, we discuss the unimodality and $\gamma$-positivity of augmented Chow polynomials. We first provide a formula for the right augmented Chow polynomial in terms of the $cd$-index. We then prove that the augmented Chow polynomials of Gorenstein* posets are unimodal and that those of face posets of polytopes are $\gamma$-positive. Throughout the section, the augmented Chow, Chow and KLS polynomials are those associated to the Eulerian kernel.

\subsection{Augmented Chow polynomials and the \texorpdfstring{${\mathbf{cd}}$}{cd}-index} \label{subsec:augmented-cd}

Every Eulerian poset $P$ of rank $n+1$ can be associated with a bivariate homogeneous polynomial $\Phi_P(\mathbf{c},\mathbf{d})\in \mathbb{Z}\langle \mathbf{c}, \mathbf{d} \rangle$ of degree $n$ in two non-commuting variables $\mathbf{c}$ and $\mathbf{d}$, often called the \emph{cd-index} of $P$, where $\deg\mathbf{c}=1$ and $\deg\mathbf{d}=2$. We refer to \cite{bayer-cd} for a detailed account of the $cd$-index of Eulerian posets. 

A result of Bayer and Ehrenborg \cite[Theorem~4.2]{bayer-ehrenborg} allows for the computation of the toric $g$-polynomial of an Eulerian poset $P$ from a linear map $g:\Z \langle \mathbf{c} , \mathbf{d} \rangle \to \Z[x]$ such that $g(\Phi_P,x)$ is the toric $g$-polynomial of $P$. 

The $cd$-index of the dual poset $P^{\ast}$ may also be calculated as follows. 
Take the involution given by
\[
 w=u_1u_2\cdots u_m\longmapsto w^{\ast}=u_m\cdots u_2u_1,
 \qquad u_j\in\{\mathbf{c},\mathbf{d}\},
\]
and extend it linearly to $\Z\langle\mathbf{c},\mathbf{d}\rangle$.
Then 
\[
\Phi_{P^{\ast}} = (\Phi_P)^{\ast}.
\]
Since the right KLS polynomial of $P$ is the toric $g$-polynomial of $P^{\ast}$, it may be calculated from a linear map $f:\Z \langle \mathbf{c} , \mathbf{d} \rangle \to \Z[x]$ such that $f(\Phi_P,x) = g(\Phi_{P^{\ast}},x)$ by setting 
\[
f(w,x) := g(w^{\ast},x)
\]
on every $cd$-polynomial $w$.

The Chow polynomial of $P$ can also be calculated from the $cd$-index. In particular, Ferroni, Matherne and Vecchi have proved that it is equal to the $h$-polynomial of the proper order complex $\Delta(\bar{P})$ \cite[Theorem~5.4]{ferroni-matherne-vecchi}, which Gal \cite[p.~275]{gal} observes to be $\Phi_P(1+x,2x)$. Therefore, there also exists a linear function $\H:\Z \langle \mathbf{c} , \mathbf{d} \rangle \to \Z[x]$ defined by 
\[
\H(w(\mathbf{c},\mathbf{d}),x) = w(1+x,2x)
\] 
for every $cd$-polynomial $w$, such that $\H(\Phi_P,x)$ is the Chow polynomial of $P$. \smallskip

Let $\Delta$ be the Newtonian coproduct on $\mathbb{Z} \langle\mathbf{c},\mathbf{d}\rangle$ \cite{ehrenborg-readdy-coproduct},  defined by
\[
 \Delta(1)=0,\qquad \Delta(\mathbf{c})=2(1\otimes1),\qquad
 \Delta(\mathbf{d})=1\otimes\mathbf{c}+\mathbf{c}\otimes1,\qquad
 \Delta(uv)=\Delta(u)(1\otimes v)+(u\otimes1)\Delta(v).
\]
We use the \textit{Sweedler notation}: since $\Delta(w)$ belongs to $\mathbb{Z} \langle\mathbf{c},\mathbf{d}\rangle \otimes \mathbb{Z} \langle\mathbf{c},\mathbf{d}\rangle$, it is a finite sum of pure tensors and can be written as
\[
\Delta(w) = \sum_{w} w_{(1)}\otimes w_{(2)},
\]
where $w_{(1)}$ and $w_{(2)}$ are not specific elements, but the first and second tensor factors across the entire sum. Ehrenborg and Readdy have proved \cite{ehrenborg-readdy-coproduct} that the $cd$-index of an Eulerian poset $P$ satisfies
\begin{equation} \label{eq:cd-index_is_coalgebra_morphism}
\Delta(\Phi_P) = \sum_{\zero < z < \one} \Phi_{[\zero,z]} \otimes \Phi_{[z,\one]}.
\end{equation}

Using the observations above, we may now calculate the right augmented Chow polynomial from the $cd$-index.
\begin{theorem}
    Let $P$ be an Eulerian poset and $F:\Z \langle \mathbf{c} , \mathbf{d} \rangle \to \Z[x]$ be the linear function given by
    \[
     F(w,x):=f(w,x)+x\H(w,x)  +x\sum_w \H({w_{(1)}},x)f({w_{(2)}},x),
    \]
    for any $cd$-polynomial $w$. Then
    \[
        F(\Phi_P,x) =   F_P(x).
    \]
\end{theorem}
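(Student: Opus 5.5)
We start from the expansion of $F_P$ already obtained in the proof of the toric $h$-polynomial theorem. There we showed that
\[
F_P(x)=\sum_{y\in P} f_{y\one}(x)\,\H^{\rev}_{\zero y}(x),
\]
with $\H^{\rev}_{\zero\zero}=1$ and $\H^{\rev}_{\zero y}=x\H_{\zero y}$ for $y>\zero$. Splitting off the two extreme terms $y=\zero$ and $y=\one$ gives
\[
F_P(x)=f_{\zero\one}(x)+x\H_{\zero\one}(x)+x\sum_{\zero<y<\one}\H_{\zero y}(x)\,f_{y\one}(x),
\]
where we use $f_{\one\one}=1$. The plan is to match these three pieces with the three terms in the definition of the linear map $F(w,x)$ evaluated at $w=\Phi_P$.

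For the first two pieces we use the facts recalled above. The term $f_{\zero\one}(x)=f_P(x)$ is the toric $g$-polynomial of $P^*$. By Bayer--Ehrenborg together with $\Phi_{P^*}=\Phi_P^*$, this equals $g(\Phi_P^*,x)=f(\Phi_P,x)$. The term $\H_{\zero\one}(x)$ is the Chow polynomial of $P$. By \cite[Theorem~5.4]{ferroni-matherne-vecchi} and Gal's observation, it equals $\Phi_P(1+x,2x)=\H(\Phi_P,x)$. Every interval of an Eulerian poset is Eulerian, so the same identities hold for each $[\zero,y]$ and $[y,\one]$. Hence $\H_{\zero y}(x)=\H(\Phi_{[\zero,y]},x)$ and $f_{y\one}(x)=f(\Phi_{[y,\one]},x)$; these right KLS values are computed in the dual interval $[y,\one]^*$.

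For the third piece we apply the bilinear map $u\otimes v\mapsto \H(u,x)f(v,x)$ to the Ehrenborg--Readdy identity \eqref{eq:cd-index_is_coalgebra_morphism}. This gives
\[
\sum_{w}\H(w_{(1)},x)f(w_{(2)},x)\Big|_{w=\Phi_P}=\sum_{\zero<y<\one}\H(\Phi_{[\zero,y]},x)\,f(\Phi_{[y,\one]},x),
\]
which is exactly the middle sum above. Adding the three pieces yields $F(\Phi_P,x)=F_P(x)$. Linearity of $F$ is clear, since $f$ and $\H$ are linear and $\Delta$ is linear.

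The main obstacle is bookkeeping at the edges rather than a conceptual step. We must check the conventions for small intervals. For rank-one intervals the $cd$-index is $1$, and then $\H(1,x)=1$ must agree with $\H_{\zero y}=1$ for an atom $y$. Likewise $f(1,x)=1$ must agree with $f_{y\one}$ for a coatom. We must also confirm that the Bayer--Ehrenborg map $g$ indeed yields $g(1,x)=1$ with the degree conventions used. We also need that the coproduct formula sums over strictly intermediate $z$ only, which it does, so that the terms $y=\zero,\one$ are correctly accounted for by $f(w)$ and $x\H(w)$ respectively. Finally, we must verify that the ranks match the degree shift: $\Phi_P$ has degree $\operatorname{rk}P-1$, and the factors have degrees $\rho_{\zero y}-1$ and $\rho_{y\one}-1$, which sum to $\operatorname{rk}P-2$, the degree of $\Delta(\Phi_P)$. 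This is consistent, since $\Delta$ lowers degree by one.
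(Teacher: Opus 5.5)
Your proposal is correct and follows essentially the same route as the paper. Both arguments write $F_P=(\H^{\rev}\cdot f)_P$, use $\H^{\rev}=x\H+(1-x)\delta$ to split off the endpoint terms $f_P$ and $x\H_P$, identify those terms via Bayer--Ehrenborg (with $\Phi_{P^*}=\Phi_P^*$) and Gal's formula, and convert the middle sum through the Ehrenborg--Readdy coproduct identity. Your extra checks on rank-one intervals and degrees are consistent with the paper's conventions (for example $F(1,x)=1+x$).
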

\begin{proof}
    Recall that $F=\H\cdot f^{\operatorname{rev}}
 =\H^{\operatorname{rev}}\!\cdot f$ and $\H^{\rev}\! =  x\H + (1-x)\delta $; thus, we have
 \begin{align*}
     F(\Phi_P,x) &= f(\Phi_P,x)+x\H(\Phi_P,x)  +x\sum_{\Phi_P} \H({(\Phi_P)_{(1)}},x)f({(\Phi_P)_{(2)}},x) \\
     &= f_P(x) + x\H_P(x) + \sum_{\zero < z < \one} x\H_{[\zero,z]}(x)f_{[z,\one]}(x)\\
     &= (\H^{\operatorname{rev}}\!\cdot f)_P =F_P(x),
 \end{align*}
 where the second equality follows from equation (\ref{eq:cd-index_is_coalgebra_morphism}).\end{proof}
Notice that for the empty word $1$, we have $F(1,x) = 1+x$. 

For a homogeneous $cd$-polynomial $w$ of degree $n$, write $f(w,x)=\sum_{i\geq0}f_i(w)x^i$. We have $\deg f(w,x)\leq\lfloor n/2\rfloor$; in particular, $f_{\lfloor(n+1)/2\rfloor}(w)=0$ when $n$ is odd. In the following lemma we state a recursive formula for $F$.
\begin{lemma}\label{lem:cd-component-recursions}
Let $w$ be a ${cd}$-monomial of degree $n$. Then
\begin{align*}
F({\mathbf{c}w},x) &= (1+x)F(w,x) + f_{\lfloor (n+1)/2 \rfloor}(w) x^{ \lfloor (n+3)/2 \rfloor},\\
F({\mathbf{d}w},x) &= 2xF(w,x) + f_{\lfloor (n+1)/2 \rfloor}(w)x^{ \lfloor (n+3)/2 \rfloor}(1+x).
\end{align*}
\end{lemma}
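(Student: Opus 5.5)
The plan is to expand both sides using the defining formula
\[
F(w,x)=f(w,x)+x\H(w,x)+x\sum_w \H(w_{(1)},x)f(w_{(2)},x),
\]
and to compare the three pieces separately. Two of them are straightforward. Since $\H(w,x)=w(1+x,2x)$, we have $\H(\mathbf{c}w)=(1+x)\H(w)$ and $\H(\mathbf{d}w)=2x\H(w)$. For the coproduct we use the Leibniz rule together with $\Delta(\mathbf{c})=2(1\otimes 1)$ and $\Delta(\mathbf{d})=1\otimes\mathbf{c}+\mathbf{c}\otimes 1$. This gives
\[
\Delta(\mathbf{c}w)=2(1\otimes w)+(\mathbf{c}\otimes 1)\Delta(w),\qquad
\Delta(\mathbf{d}w)=1\otimes \mathbf{c}w+\mathbf{c}\otimes w+(\mathbf{d}\otimes 1)\Delta(w).
\]
Apply $\H\otimes f$ to these and use $\H(1)=1$ and $\H(\mathbf{c})=1+x$. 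Subtracting $(1+x)F(w)$ in the $\mathbf{c}$ case leaves
\[
F(\mathbf{c}w)-(1+x)F(w)=f(\mathbf{c}w)-(1-x)f(w).
\]
Subtracting $2xF(w)$ in the $\mathbf{d}$ case leaves
\[
F(\mathbf{d}w)-2xF(w)=f(\mathbf{d}w)+xf(\mathbf{c}w)-x(1-x)f(w).
\]
In both computations all the terms involving $\H(w)$ and the Sweedler sum of $w$ cancel.

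Write $m=\lfloor (n+3)/2\rfloor$ and $a=f_{\lfloor (n+1)/2\rfloor}(w)$. The lemma is then equivalent to the two identities
\[
f(\mathbf{c}w)=(1-x)f(w)+a\,x^{m},\qquad f(\mathbf{d}w)=a\,x^{m}.
\]
Indeed, substituting the first identity into the $\mathbf{d}$-difference gives $f(\mathbf{d}w)+a x^{m+1}$, and this equals $a x^m(1+x)$ exactly when the second identity holds. The first identity says that $f(\mathbf{c}w)$ is $(1-x)f(w)$ truncated to degree at most $\lfloor (n+1)/2\rfloor$. The correction $a x^m$ cancels the only term of $(1-x)f(w)$ that can exceed the degree bound, namely $-a x^{\lfloor(n+1)/2\rfloor+1}=-a x^m$. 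The second identity says $f(\mathbf{d}w)$ is a single monomial, and it vanishes when $n$ is odd because then $a=0$.

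The main step, and the main obstacle, is establishing these two recursions for $f$. Since $f(w,x)=g(w^*,x)$, they are equivalent to recursions for the Bayer--Ehrenborg map $g$ on words ending in $\mathbf{c}$ or $\mathbf{d}$: $g(u\mathbf{c})=\text{trunc}\bigl((1-x)g(u)\bigr)$ and $g(u\mathbf{d})=$ the coefficient of $g(u)$ in degree $\lfloor(n+1)/2\rfloor$ times $x^{\lfloor(n+3)/2\rfloor}$. My first move is to extract these from the explicit description of $g$ in \cite[Theorem~4.2]{bayer-ehrenborg} and translate them through $w\mapsto w^*$.

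If the form there does not match directly, I would prove the recursions by induction on $n$ from the defining property $f^{\rev}=\varepsilon\cdot f$. In $\mathbf{cd}$-language, using \eqref{eq:cd-index_is_coalgebra_morphism} and $\varepsilon_{st}=(x-1)^{\rho_{st}}$, this property reads
\[
x^{\deg w+1}f(w,x^{-1})=f(w,x)+(x-1)^{\deg w+1}+\sum_w (x-1)^{\deg w_{(1)}+1}f(w_{(2)},x).
\]
Combined with the degree bound $\deg f<\frac12(\deg w+1)$, this relation determines $f(w)$ uniquely. So it suffices to check that the proposed right-hand sides satisfy it for $\mathbf{c}w$ and $\mathbf{d}w$, using the coproduct formulas above and the inductive hypothesis for $w$. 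As sanity checks, $f(\mathbf{c})=1$, $f(\mathbf{c}^2)=1$ and $f(\mathbf{d})=x$; these reproduce the toric $g$-polynomial $1+x$ of a square, whose $\mathbf{cd}$-index is $\mathbf{c}^2+\mathbf{d}$.
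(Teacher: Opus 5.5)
Your main line is correct and matches the paper's proof. The paper does the same Leibniz expansion of $\Delta(\mathbf{c}w)$ and $\Delta(\mathbf{d}w)$, cancels the $\H(w)$ and Sweedler terms against $(1+x)F(w,x)$ and $2xF(w,x)$, and takes the two $f$-recursions you isolate directly from Propositions~7.10 and~7.11 of \cite{bayer-ehrenborg}. Your remark that the lemma is equivalent to those two recursions is a tidy way to package this.

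Two parts of your supporting material are wrong, however.

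\textbf{The sanity check.} It contradicts the identity you are proving. With $w=\mathbf{c}$ (so $n=1$ and $a=f_1(\mathbf{c})=0$), your $\mathbf{c}$-recursion gives $f(\mathbf{c}^2)=1-x$, not $1$; this is the toric $g$-polynomial of the rank-$3$ butterfly. Also, the square has $\Phi=\mathbf{c}^2+2\mathbf{d}$. The polynomial $\mathbf{c}^2+\mathbf{d}$ is the triangle's $\mathbf{cd}$-index, whose $g$-polynomial is $1$. The correct check is $(1-x)+2x=1+x$; your two slips happen to cancel.

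\textbf{The fallback relation.} More seriously, it is not an identity on $\mathbf{cd}$-monomials. The term $(x-1)^{\deg w+1}$ comes from $\varepsilon_{\zero\one}f_{\one\one}$ and is not linear in $w$. The linear version, valid for all monomials, replaces it by $[\mathbf{c}^{n}]w\cdot(x-1)^{n+1}$ with $n=\deg w$. This works because every Eulerian $\Phi_P$ has $\mathbf{c}^n$-coefficient $1$. Moreover, passing from posets to arbitrary monomials at all requires the fact that $\mathbf{cd}$-indices of Eulerian posets span $\mathbb{Z}\langle\mathbf{c},\mathbf{d}\rangle$.

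As you wrote it, the relation already fails for $w=\mathbf{d}$: the left side is $x^2$ and the right side is $x^2+(x-1)^3$. Solving it under the degree bound would ``determine'' $f(\mathbf{d})=1-2x$ instead of $x$, so the verification you propose would fail. With the corrected constant term, your uniqueness strategy is sound.
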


\begin{proof}
By Propositions~7.10 and 7.11 of \cite{bayer-ehrenborg}, we know that
\begin{align}
f(\mathbf{c}w,x) &= (1-x) f(w,x) +  f_{\lfloor (n+1)/2 \rfloor}(w) x^{ \lfloor (n+3)/2 \rfloor}, \label{eq:f(cv)}\\
f(\mathbf{d}w,x) &=  f_{\lfloor (n+1)/2 \rfloor}(w) x^{ \lfloor (n+3)/2 \rfloor}. \label{eq:f(dv)}
\end{align}
Therefore, we may evaluate 
\begin{align*}
     F(\mathbf{c}w,x)&=f(\mathbf{c}w,x)+x\H(\mathbf{c}w,x) + 2xf(w,x) +x\sum_w \H(\mathbf{c}{w_{(1)}},x)f({w_{(2)}},x) \\
     &=(1-x) f(w,x) +  f_{\lfloor (n+1)/2 \rfloor}(w) x^{ \lfloor (n+3)/2 \rfloor}\\
     &\quad +x(1+x)\H(w,x) +2 xf(w,x) +x(1+x)\sum_w \H({w_{(1)}},x)f({w_{(2)}},x).
\end{align*}
Factoring out the term $(1+x)$, we obtain
\begin{align*}
     F(\mathbf{c}w,x)&= (1+x) \left( f(w,x) + x\H(w,x) + x \sum_w \H({w_{(1)}},x)f({w_{(2)}},x) \right) \\
     &\quad + f_{\lfloor (n+1)/2 \rfloor}(w) x^{ \lfloor (n+3)/2 \rfloor} \\
     &= (1+x)F(w,x) +  f_{\lfloor (n+1)/2 \rfloor}(w) x^{ \lfloor (n+3)/2 \rfloor}.
\end{align*} 
Similarly,
\begin{align*}
     F(\mathbf{d}w,x)&=f(\mathbf{d}w,x)+x\H(\mathbf{d}w,x) + x\H(\mathbf{c},x)f(w,x) + xf(\mathbf{c}w,x) +x\sum_w \H(\mathbf{d}{w_{(1)}},x)f({w_{(2)}},x) \\
     &=  f_{\lfloor (n+1)/2 \rfloor}(w) x^{ \lfloor (n+3)/2 \rfloor} +2x^2\H(w,x) +x(1+x)f(w,x) \\
     &\quad   +x(1-x) f(w,x) +  f_{\lfloor (n+1)/2 \rfloor}(w) x^{ \lfloor (n+5)/2 \rfloor} +2x^2\sum_w \H({w_{(1)}},x)f({w_{(2)}},x) \\
     &= f_{\lfloor (n+1)/2 \rfloor}(w) x^{ \lfloor (n+3)/2 \rfloor} +2x^2\H(w,x)  +2xf(w,x)\\
     &\quad  +  f_{\lfloor (n+1)/2 \rfloor}(w) x^{ \lfloor (n+5)/2 \rfloor}  +2x^2\sum_w \H({w_{(1)}},x)f({w_{(2)}},x),
\end{align*}
and by factoring out $2x$ we get
\begin{align*}
     F(\mathbf{d}w,x) &= 2x \left( f(w,x) + x\H(w,x)+x\sum_w \H({w_{(1)}},x)f({w_{(2)}},x) \right) \\
     &\quad + f_{\lfloor (n+1)/2 \rfloor}(w) x^{ \lfloor (n+3)/2 \rfloor}  + f_{\lfloor (n+1)/2 \rfloor}(w) x^{ \lfloor (n+5)/2 \rfloor} \\
     &= 2x F(w,x) + f_{\lfloor (n+1)/2 \rfloor}(w) x^{ \lfloor (n+3)/2 \rfloor}(1+x). \qedhere
\end{align*} 
\end{proof}

The recursions in Lemma~\ref{lem:cd-component-recursions} also show by induction that $F(w,x)$ is a symmetric polynomial with centre of symmetry $(n+1)/2$ for every $cd$-monomial $w$ of degree $n$.
The lemma may also be used to explicitly derive a closed formula for the right augmented Chow polynomial. Let us first introduce some notation.

Let $C_j=\frac{1}{j+1}\binom{2j}{j}$ be the $j$-th Catalan number. For any natural number $k$, we define 
    \[
        T_k(x) :=  \begin{cases}
        (-1)^{(k-1)/2} C_{(k-1)/2} x^{(k-1)/2} & \text{if $k$ is odd}\\
        0 & \text{if $k$ is even},
        \end{cases}
    \]
and also
\begin{align*}
    S_k(x) &:=  (1+x)^{k+1}
      +x\sum_{i=0}^{k-1}(1+x)^iT_{k-i}(x), \\
    s_k(x) &:= 1 + x\sum_{i=0}^{k-1}T_{k-i}(x).
\end{align*}
Note that the family of polynomials $\{T_k(x)\}_{k \in \mathbb{N}}$ may be found in the formula for the toric $g$-vector of an Eulerian poset found by Bayer and Ehrenborg \cite{bayer-ehrenborg}. Propositions~7.12 and 7.13 of the same article imply that for any $cd$-monomials $u$ and $v$ and any natural number $k$ we have
\begin{align} 
f(u\mathbf{d}v,x) &= f(u,x) f(\mathbf{d}v,x), \label{eq:f(udv)_factorizes}\\
f_{\lfloor (k+1)/2 \rfloor}(\mathbf{c}^k)x^{\lfloor(k+1)/2 \rfloor} &= T_{k+1}(x). \label{eq:f_i(c^k,x)-as-T_k+1(x)}
\end{align}
We now prove the following proposition about $F(w,x)$ and its $\gamma$-polynomial $\gamma_F(w,x)$.
\begin{proposition}\label{prop:gamma-component-formula}
If $w=\mathbf{c}^{k_0}\mathbf{d}\mathbf{c}^{k_1}\mathbf{d}\cdots
\mathbf{d}\mathbf{c}^{k_r}$ is a $cd$-monomial of degree $n$, then
\begin{align*}
    F(w,x) &= x^r \sum_{\ell = 0}^r 2^{\ell} (1+x)^{h_\ell} S_{k_{\ell}}(x) \prod_{j = \ell+1}^{r} T_{k_j+1}(x), \\
    \gamma_F(w,t) &= t^r \sum_{\ell = 0}^r 2^{\ell} s_{k_{\ell}}(t) \prod_{j = \ell+1}^{r} T_{k_j+1}(t),
\end{align*}
where $h_{\ell} := \sum_{i=0}^{\ell-1}k_i$.
\end{proposition}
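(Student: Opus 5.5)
The natural plan is induction on the number $r$ of $\mathbf{d}$'s. Within each step we apply the recursions of Lemma~\ref{lem:cd-component-recursions} letter by letter, reading $w$ from the left. Since $F$ is linear and the recursions only involve a prefix letter, we peel off $\mathbf{c}^{k_0}\mathbf{d}$ from $w=\mathbf{c}^{k_0}\mathbf{d}v$ with $v=\mathbf{c}^{k_1}\mathbf{d}\cdots\mathbf{d}\mathbf{c}^{k_r}$.

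\emph{Base case and prefix of $\mathbf{c}$'s.} For a monomial $u$ of degree $m$, iterating the first recursion $k$ times gives
\[
F(\mathbf{c}^k u,x)=(1+x)^kF(u,x)+\sum_{i=0}^{k-1}(1+x)^i\,f_{\lfloor (m+k-i)/2\rfloor}(\mathbf{c}^{k-1-i}u)\,x^{\lfloor (m+k-i+2)/2\rfloor}.
\]
For the base case $r=0$ we take $u=1$, $m=0$, $F(1,x)=1+x$. By \eqref{eq:f_i(c^k,x)-as-T_k+1(x)}, each top coefficient term equals $x\,T_{k-i}(x)$; one has to check that the exponent shift is exactly one, using that $T_j$ vanishes for $j$ even, so only odd $j$ matter. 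This yields $S_{k_0}(x)$, as required.

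\emph{Inductive step.} Write $w=\mathbf{c}^{k_0}\mathbf{d}v$, where $v$ has $r-1$ $\mathbf{d}$'s and degree $n-k_0-2$. By the second recursion,
\[
F(\mathbf{d}v,x)=2xF(v,x)+\text{top}(v)\,x^{\cdot}(1+x).
\]
The top coefficient term of $f(v,x)$ should be identified using \eqref{eq:f(udv)_factorizes} and \eqref{eq:f(dv)}: the top coefficient of $f$ on a word ending in $\mathbf{d}\mathbf{c}^{k_r}$ factorizes as a product $\prod_j T_{k_j+1}$, with the appropriate power of $x$. Then we prepend $\mathbf{c}^{k_0}$ by the iterated formula above. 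The $(1+x)^{k_0}$ multiplies $2xF(v,x)$, and this produces the terms $\ell\ge1$ with $h_\ell$ shifted by $k_0$ and the power $2^\ell$. The remaining contributions — the $(1+x)$ term from $\mathbf{d}$ together with the tail sums involving $f_{\text{top}}(\mathbf{c}^{j}\mathbf{d}v)$, which again factor via \eqref{eq:f(udv)_factorizes} — should assemble into $x^r S_{k_0}(x)\prod_{j\ge1}T_{k_j+1}(x)$, which is the $\ell=0$ term.

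\emph{Main obstacle.} The main difficulty is the bookkeeping of top coefficients: the parity conventions $\lfloor (n+1)/2\rfloor$ versus $\lfloor (n+3)/2\rfloor$, and the verification that $f_{\mathrm{top}}(\mathbf{c}^j\mathbf{d}v)$ equals $T_{j+1}$ times $f_{\mathrm{top}}(\mathbf{d}v)$ with the exact power of $x$. I would handle this first as a separate lemma: the top term of $f(w,x)$ equals $x^{\lceil n/2\rceil}\prod_{j}T_{k_j+1}(1)$ when it is nonzero, and is zero otherwise. With that lemma the induction reduces to collecting terms.

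\emph{The $\gamma$-formula.} Since each $F(w,x)$ is palindromic with centre $(n+1)/2$, write $F=\sum\gamma_i x^i(1+x)^{n+1-2i}$. Each summand in the formula for $F(w,x)$ is itself visibly of the form $x^a(1+x)^b\cdot(\text{palindromic})$, with $T_{k+1}(x)$ a monomial $x^{k/2}$ times a constant, because $k$ is even whenever $T_{k+1}\neq0$. So it suffices to compute the $\gamma$-polynomial of $S_k$, which is $s_k$. This follows since $(1+x)^{k+1}\mapsto 1$ and $x(1+x)^iT_{k-i}(x)\mapsto t\,T_{k-i}(t)$: here $k-i$ is odd, so $(1+x)^i$ has exactly the right exponent $k+1-2\cdot\frac{k-i+1}{2}$. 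The $\gamma$-transform is multiplicative in this monomial sense and linear, and this gives the second formula.
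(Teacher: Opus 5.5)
Your proof is correct and follows the paper's argument. Both proceed by induction on the number of $\mathbf{d}$'s: peel off the prefix $\mathbf{c}^{k_0}\mathbf{d}$ and use \eqref{eq:f(dv)}, \eqref{eq:f(udv)_factorizes} and \eqref{eq:f_i(c^k,x)-as-T_k+1(x)} to get $F(\mathbf{c}^{k_0}\mathbf{d}v,x)=2x(1+x)^{k_0}F(v,x)+S_{k_0}(x)\,f(\mathbf{d}v,x)$ with $f(\mathbf{d}v,x)=x^r\prod_{j\geq1}T_{k_j+1}(x)$. The only difference is the order: the paper first rewrites Lemma~\ref{lem:cd-component-recursions} in $\gamma$-form, where the $\mathbf{c}$-step is additive and the factors $(1+x)^{h_\ell}$ never appear, then reads off the $F$-identity; you run the induction on $F$ and convert to $\gamma$ at the end.
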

\begin{proof}
To prove the statement, we only need to show the second identity, as the first will be implied by the definition of the $\gamma$-polynomial. Observe that Lemma \ref{lem:cd-component-recursions} can be reformulated for $\gamma_F$ as follows
\begin{align}
    \gamma_F(\mathbf{c}v,t) &= \gamma_F(v,t) + f_{\lfloor (n+1)/2 \rfloor}(v) t^{\lfloor (n+3)/2 \rfloor} \label{eq:recursion_F(cv)},\\
    \gamma_F(\mathbf{d}v,t) &=2t\gamma_F(v,t) + f_{\lfloor (n+1)/2 \rfloor}(v) t^{\lfloor (n+3)/2 \rfloor}\label{eq:recursion_F(dv)},
\end{align}
for any $cd$-monomial $v$ of degree $n$. In particular, given $w = \mathbf{c}^k$ for some $k \geq 0$, we may repeatedly apply \eqref{eq:recursion_F(cv)} to obtain
\begin{align*}
 \gamma_F(\mathbf{c}^k,t)
 &=1+\sum_{j=0}^{k-1}
 f_{\lfloor(j+1)/2\rfloor}(\mathbf{c}^j)t^{\lfloor(j+3)/2\rfloor}\\
 &=1+t\sum_{j=0}^{k-1}T_{j+1}(t)=s_k(t),
\end{align*}
where we used \eqref{eq:f_i(c^k,x)-as-T_k+1(x)} in the second equality.

Similarly, let $w$ be a $cd$-monomial of degree $n$ of the form $w = \mathbf{c}^k\mathbf{d}v$ for some $k \geq 0$ and some $cd$-monomial $v$. If we apply the recursive identity \eqref{eq:recursion_F(cv)} repeatedly and then \eqref{eq:f(udv)_factorizes}, \eqref{eq:f_i(c^k,x)-as-T_k+1(x)}, and \eqref{eq:recursion_F(dv)}, we get
\begin{align*}
    \gamma_F(\mathbf{c}^k\mathbf{d}v,t) &= \gamma_F(\mathbf{d}v,t) +f(\mathbf{d}v,t)\sum_{j=0}^{k-1}f_{\lfloor(j+1)/2\rfloor}(\mathbf{c}^j)t^{\lfloor(j+3)/2\rfloor}\\
    &= \gamma_F(\mathbf{d}v,t) +tf(\mathbf{d}v,t)\sum_{j=0}^{k-1}T_{j+1}(t)\\
    &=2t\gamma_F(v,t)+f(\mathbf{d}v,t) +tf(\mathbf{d}v,t)\sum_{j=0}^{k-1}T_{j+1}(t)\\
    &=2t\gamma_F(v,t)+s_k(t)f(\mathbf{d}v,t).
\end{align*}
By induction on $r$ and using \eqref{eq:f(dv)}, this proves that for $w=\mathbf{c}^{k_0}\mathbf{d}\mathbf{c}^{k_1}\mathbf{d}\cdots
\mathbf{d}\mathbf{c}^{k_r}$ we have
\[
\gamma_F(w,t) = t^r \sum_{\ell = 0}^r 2^{\ell} s_{k_{\ell}}(t) \prod_{j = \ell+1}^{r} T_{k_j+1}(t). \qedhere
\]
\end{proof}
\subsection{Unimodality for Gorenstein* Posets}

In this subsection we prove that the right augmented Chow polynomial of a Gorenstein* poset $P$ is always unimodal.

Write $f(w,x)=\sum_{i\geq 0} f_i(w)x^i$ and $F(w,x)=\sum_{j\geq 0}F_j(w)x^j$.

\begin{proposition}\label{prop:cd-components-unimodal}
Let $w$ be a ${cd}$-monomial of degree $n$. For every $0\leq i\leq\lfloor(n+2)/2\rfloor$, the polynomial $F(w,x)$ satisfies
\begin{equation}\label{eq:strong-component-inequality}
  F_i(w)-F_{i-1}(w) \geq |f_i(w)|,
\end{equation}
where $F_{-1}(w)=0$. In particular, $F(w,x)$ is nonnegative and unimodal.
\end{proposition}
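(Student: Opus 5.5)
Induct on $n$, the degree of $w$, using the two recursions of Lemma~\ref{lem:cd-component-recursions}. Throughout, set $a(w):=f_{\lfloor(n+1)/2\rfloor}(w)$ and $m:=\lfloor(n+3)/2\rfloor$.

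\emph{Base case and coefficient recursions.} For $w=1$ we have $F(1,x)=1+x$ and $f(1,x)=1$. The inequality at $i=0$ reads $1\ge 1$; at $i=1$ it reads $0\ge 0$, since $f_1(1)=0$. Now write $v$ for a monomial of degree $n$. Extracting the coefficient of $x^i$ in the recursions of Lemma~\ref{lem:cd-component-recursions} gives
\begin{align*}
F_i(\mathbf{c}v)-F_{i-1}(\mathbf{c}v)&=\bigl(F_i(v)-F_{i-2}(v)\bigr)+a(v)\bigl([i=m]-[i-1=m]\bigr),\\
F_i(\mathbf{d}v)-F_{i-1}(\mathbf{d}v)&=2\bigl(F_{i-1}(v)-F_{i-2}(v)\bigr)+a(v)\bigl([i=m]-[i=m+2]\bigr).
\end{align*}
The range of $i$ to be checked for the word of degree $n+1$ (resp.\ $n+2$) is $i\le\lfloor(n+3)/2\rfloor=m$ (resp.\ $i\le m+1$). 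So the subtracted Kronecker terms never occur, and the correction is $+a(v)$ exactly at $i=m$.

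\emph{Inequalities for the new words.} By equations~\eqref{eq:f(cv)} and~\eqref{eq:f(dv)}:
\begin{itemize}
\item $f_i(\mathbf{c}v)=f_i(v)-f_{i-1}(v)$ for $i<m$;
\item the top coefficient of $f(\mathbf{c}v)$ also picks up $a(v)$, relevant only when the index is $m$;
\item $f(\mathbf{d}v,x)=a(v)x^{m}$.
\end{itemize}
For $\mathbf{c}v$ with $i<m$, write $F_i(v)-F_{i-2}(v)=(F_i-F_{i-1})+(F_{i-1}-F_{i-2})$. By induction this is at least $|f_i(v)|+|f_{i-1}(v)|\ge|f_i(\mathbf{c}v)|$. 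For $\mathbf{d}v$ with $i\ne m$, the right side is $2(F_{i-1}(v)-F_{i-2}(v))\ge 0=|f_i(\mathbf{d}v)|$.

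\emph{Boundary index $i=m$ (the main obstacle).} This is where parity and symmetry matter.
\begin{itemize}
\item For $\mathbf{d}v$ we need $2(F_{m-1}(v)-F_{m-2}(v))+a(v)\ge|a(v)|$. Since $m-1\le\lfloor(n+2)/2\rfloor$, induction gives $F_{m-1}(v)-F_{m-2}(v)\ge|f_{m-1}(v)|$. When $n$ is even, $m-1=n/2$ and $a(v)=f_{n/2}(v)$, so this is $\ge|a(v)|$ and the bound follows. When $n$ is odd, $a(v)=0$.
\item For $\mathbf{c}v$ at $i=m$, $n$ odd gives $a(v)=0$. For $n$ even, $m=n/2+1$; by the center of symmetry $(n+1)/2$ of $F(v)$, we get $F_m(v)=F_{m-1}(v)$. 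The left side is then $F_{m-1}(v)-F_{m-2}(v)+a(v)\ge|a(v)|+a(v)$. The required bound is $|f_m(\mathbf{c}v)|$, where $f_m(\mathbf{c}v)=a(v)-f_{m-1}(v)+\dots$; the degree bound on $f$ forces care here.
\end{itemize}
I expect this bookkeeping to be delicate. I would handle it by checking $\deg f(\mathbf{c}v)\le\lfloor(n+1)/2\rfloor<m$, so that the target at $i=m$ is $0$ and only nonnegativity of $|a(v)|+a(v)$ is needed.

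\emph{Conclusion.} Nonnegativity and unimodality follow by summing the inequalities~\eqref{eq:strong-component-inequality}. The coefficients increase up to the middle index $\lfloor(n+2)/2\rfloor$, and symmetry about $(n+1)/2$ gives the decreasing half.
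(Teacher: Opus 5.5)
Your route is the paper's: induction on degree through the recursions of Lemma~\ref{lem:cd-component-recursions}, the triangle inequality $|f_i(v)|+|f_{i-1}(v)|\ge|f_i(v)-f_{i-1}(v)|$ for the $\mathbf{c}$-step below the top index, and a separate check at the top index. Your $\mathbf{d}$-step (up to a range slip noted below) and your case $w=\mathbf{c}v$, $i=m$, $n$ even are correct. The remaining case, $w=\mathbf{c}v$, $i=m$, $n$ odd, is not closed by observing $a(v)=0$.

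Write $\nabla_i(u):=F_i(u)-F_{i-1}(u)$. What is left to show is $\nabla_m(v)+\nabla_{m-1}(v)\ge 0$ with $m=(n+3)/2>\lfloor(n+2)/2\rfloor$. So $\nabla_m(v)$ lies outside the inductive range, and there it can be negative: $F(\mathbf{c},x)=1+3x+x^2$ gives $\nabla_2(\mathbf{c})=-2$. The missing step is the symmetry you already use for $n$ even. From $F_j(v)=F_{n+1-j}(v)$ you get $F_m(v)=F_{(n-1)/2}(v)=F_{m-2}(v)$, so the left side is exactly $0=|f_m(\mathbf{c}v)|$. Equivalently, for any $w$ of even degree $N$ the top index is $N/2+1$. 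There $F_{N/2+1}(w)=F_{N/2}(w)$ and $f_{N/2+1}(w)=0$, so the inequality is just $0\ge 0$.

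This is worth making explicit because the paper's own chain of inequalities breaks at exactly this point. For $\deg w$ even it writes $\nabla_i(\mathbf{c}v)\ge\nabla_i(v)\ge|f_i(v)|$ with $i=\lfloor(\deg w+2)/2\rfloor$, which is beyond the range of the hypothesis for $v$. For $w=\mathbf{c}^2$ the second inequality reads $-2\ge 0$. The conclusion survives only through the symmetry argument above.

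There is a smaller slip of the same kind. For $n$ odd the range for $\mathbf{d}v$ is $i\le\lfloor(n+4)/2\rfloor=m$, not $m+1$. At $i=m+1$ your bound $2\nabla_m(v)\ge 0$ would again use the hypothesis out of range, and it is false there: for instance $F(\mathbf{d}\mathbf{c},x)=2x+6x^2+2x^3$ has $\nabla_3=-4$. Just restrict to the true range.
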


\begin{proof}
For every $cd$-monomial $u$, set $\nabla_i(w):=F_i(w)-F_{i-1}(w)$. We prove the statement by induction on $n$.

If $n=0$, the statement is trivial since $F(1,x)=1+x$ and $f(1,x)=1$. 

If $n >0$, consider the case where $w= \mathbf{c}v$ for some $cd$-monomial $v$. Then, for $i < \lfloor (n+2)/2 \rfloor$, 
\begin{align*}
    \nabla_i(\mathbf{c}v) &= F_i(\mathbf{c}v)-F_{i-1}(\mathbf{c}v) \\
    &=  \nabla_i(v) + \nabla_{i-1}(v),
\end{align*}
by Lemma~\ref{lem:cd-component-recursions}. Using the induction hypothesis, we get
\begin{align*}
    \nabla_i(\mathbf{c}v)&\geq |f_i(v)|+ |f_{i-1}(v)| \\
    &\geq |f_i(v)-f_{i-1}(v)|\\
    &= |f_i(\mathbf{c}v)|,
\end{align*}
where the last equality follows from \eqref{eq:f(cv)}. Similarly, for $i=\lfloor (n+2)/2 \rfloor$, we have
\begin{align*}
    \nabla_i(\mathbf{c}v) &= F_i(\mathbf{c}v)-F_{i-1}(\mathbf{c}v) \\
     &= \nabla_{i}(v)+ \nabla_{i-1}(v) + f_{i-1}(v)\\
    &\geq \nabla_{i}(v) \geq |f_i(v)|= |f_i(\mathbf{c}v)|.
\end{align*}
Consider now the case where $w$ is a $cd$-monomial of the form $w = \mathbf{d}v$ for some $cd$-monomial $v$. Notice that for every $i < \lfloor (n+1)/2\rfloor$ we have
\begin{align*}
     \nabla_i(\mathbf{d}v) &= F_i(\mathbf{d}v)-F_{i-1}(\mathbf{d}v)\\
    &=2\nabla_{i-1}(v) \geq 0= |f_i(\mathbf{d}v)|.
\end{align*}
Otherwise, if $i= \lfloor (n+1)/2 \rfloor$
\begin{align*}
    \nabla_i(\mathbf{d}v) &= F_i(\mathbf{d}v)-F_{i-1}(\mathbf{d}v)\\
    &=2\nabla_{i-1}(v) + f_{i-1}(v)\\
    &\geq |f_{i-1}(v)| = |f_i(\mathbf{d}v)|.
\end{align*}
If $n$ is odd, then $\lfloor (n+1)/2 \rfloor = \lfloor (n+2)/2 \rfloor$; otherwise, if $n$ is even and $i=\lfloor (n+2)/2 \rfloor$, we get
\begin{align*}
    \nabla_i(\mathbf{d}v) &= F_i(\mathbf{d}v)-F_{i-1}(\mathbf{d}v)= 0  =|f_i(\mathbf{d}v)|. \qedhere
\end{align*}
\end{proof}

\begin{theorem}\label{thm:gorenstein-augmented-unimodal}
Let $P$ be an Eulerian poset whose ${cd}$-index has nonnegative coefficients.  Then its right and left augmented Chow polynomials are nonnegative and unimodal.  In particular, this holds for every Gorenstein* poset.
\end{theorem}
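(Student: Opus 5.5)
The plan is to reduce the statement to the monomial-level result, Proposition~\ref{prop:cd-components-unimodal}, using linearity of the map $F$. Let $P$ be Eulerian of rank $n+1$ and write its $\mathbf{cd}$-index as $\Phi_P=\sum_w a_w w$. Here $w$ ranges over $\mathbf{cd}$-monomials of degree $n$, and $a_w\ge 0$ by hypothesis. By the theorem expressing $F_P$ through the linear map $F$, we have
\[
F_P(x)=F(\Phi_P,x)=\sum_w a_w F(w,x).
\]
Proposition~\ref{prop:cd-components-unimodal} shows that each $F(w,x)$ has nonnegative coefficients. By the symmetry remark after Lemma~\ref{lem:cd-component-recursions}, each $F(w,x)$ is symmetric with the common centre $(n+1)/2$, since all monomials have the same degree $n$. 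Its increments $F_i(w)-F_{i-1}(w)$ are nonnegative for $i\le\lfloor (n+2)/2\rfloor$, that is, up to the middle.

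Unimodality is not preserved under sums in general. It is preserved here because all summands are symmetric about the same centre and weakly increasing up to that centre. Concretely, for $i\le\lfloor(n+2)/2\rfloor$ I will compute
\[
[x^i]F_P-[x^{i-1}]F_P=\sum_w a_w\bigl(F_i(w)-F_{i-1}(w)\bigr)\ge 0.
\]
Symmetry of $F_P$ about $(n+1)/2$, inherited termwise, then gives weak decrease past the middle. Together these yield nonnegativity and unimodality of the right augmented Chow polynomial.

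For the left augmented Chow polynomial, I will use the duality noted after Theorem~\ref{thm:augChow_is_toric-h_of_Stell}. Skew-symmetry of the Eulerian kernel identifies $G_P$ with $F_{P^*}$. One can see this by reversing the incidence algebra: $g$ for $P$ corresponds to $f$ for $P^*$, and $\H$ is self-dual because it is the $h$-polynomial of the order complex of the proper part, which is unchanged under duality. Since $\Phi_{P^*}=(\Phi_P)^*$ merely reverses monomials, the coefficients of $\Phi_{P^*}$ are a permutation of those of $\Phi_P$ and so remain nonnegative. Applying the first part to $P^*$, which is again Eulerian, finishes this case. The one point to check carefully is the identity $G_P=F_{P^*}$, and I expect it to follow directly from the definitions $F=\H\cdot f^{\rev}$ and $G=g^{\rev}\cdot\H$ together with the anti-isomorphism $\mathcal I(P)\to\mathcal I(P^*)$.

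Finally, for Gorenstein* posets, Karu's theorem \cite{karu} gives nonnegativity of the $\mathbf{cd}$-index, so the hypothesis holds and the conclusion follows. The only mildly delicate step is the common-centre argument, specifically confirming that the range $0\le i\le\lfloor(n+2)/2\rfloor$ in Proposition~\ref{prop:cd-components-unimodal} reaches the centre of symmetry $(n+1)/2$ for both parities of $n$. It does: for $n$ odd the centre is an integer $\le\lfloor(n+2)/2\rfloor$, and for $n$ even the two middle coefficients are equal by symmetry.
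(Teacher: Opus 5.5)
Your proof is correct and follows the paper's route. It uses linearity of $F$ over the nonnegative $\mathbf{cd}$-expansion together with Proposition~\ref{prop:cd-components-unimodal}, then the duality $G_P=F_{P^*}$ with $\Phi_{P^*}=(\Phi_P)^*$ for the left polynomial, and Karu's theorem for Gorenstein* posets. Your added detail on the common centre of symmetry $(n+1)/2$ makes explicit what the paper compresses into calling the result an immediate consequence of that proposition.
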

\begin{proof}
The general statement is an immediate consequence of Proposition~\ref{prop:cd-components-unimodal}. The case of Gorenstein* posets follows from the fact that the $cd$-index of a Gorenstein* poset has nonnegative coefficients \cite{karu}. 
\end{proof}
We may also notice that being Gorenstein* does not imply the $\gamma$-positivity of $F_P(x)$. We show a counterexample below.

\begin{example}\label{ex:augmented-not-gamma-positive}
Let $P$ be the poset shown in Figure \ref{fig:butterfly_rank_4}. 
\begin{figure}[ht!] 
    \centering
    \begin{tikzpicture}[baseline=(current bounding box.center),
      scale=0.45,auto=center,
      every node/.style={circle,scale=0.8,fill=black,inner sep=2.7pt}]
      \tikzstyle{edges}=[thick];

      % RANK 4: Maximal element
      \node (n4) at (0,6) {};

      % RANK 3
      \node (n3_a) at (-1.5,4.5) {};
      \node (n3_b) at (1.5,4.5) {};

      % RANK 2
      \node (n2_a) at (-1.5,3) {};
      \node (n2_b) at (1.5,3) {};

      % RANK 1
      \node (n1_a) at (-1.5,1.5) {};
      \node (n1_b) at (1.5,1.5) {};

      % RANK 0: Minimal element
      \node (n0) at (0,0) {};

      % --- EDGES ---

      % R0 to R1
      \draw[edges] (n0) -- (n1_a);
      \draw[edges] (n0) -- (n1_b);

      % R1 to R2
      \draw[edges] (n1_a) -- (n2_a);
      \draw[edges] (n1_a) -- (n2_b);
      \draw[edges] (n1_b) -- (n2_a);
      \draw[edges] (n1_b) -- (n2_b);

      % R2 to R3
      \draw[edges] (n2_a) -- (n3_a);
      \draw[edges] (n2_a) -- (n3_b);
      \draw[edges] (n2_b) -- (n3_a);
      \draw[edges] (n2_b) -- (n3_b);

      % R3 to R4
      \draw[edges] (n3_a) -- (n4);
      \draw[edges] (n3_b) -- (n4);

    \end{tikzpicture}
    \caption{Hasse diagram of the butterfly poset of rank $4$.}
    \label{fig:butterfly_rank_4}
\end{figure}
It is Gorenstein* and its $cd$-index is $\mathbf{c}^3$.
Hence, its right augmented Chow polynomial is
\[
 F_{P}(x)=1+5x+7x^2+5x^3+x^4 =(1+x)^4+x(1+x)^2-x^2.
\]
We observe that the $\gamma$-polynomial $\gamma_{F_{P}}(x)=1+x-x^2$ has a negative coefficient. Since $P$ is self-dual, the same observation holds for the left augmented Chow polynomial.
\end{example}

\subsection{\texorpdfstring{$\mathbf{\gamma}$}{Gamma}-Positivity for Polytopes}
\label{subsec:polytopal-cd-augmented}

In this subsection we prove that the right augmented Chow polynomial of the face poset of a polytope is always \textit{$\gamma$-positive}. 
Given any symmetric polynomial $p$, its associated \textit{{$\gamma$-polynomial}} is defined by $\gamma(p,x) := \sum_{i=0}^{\lfloor d/2\rfloor} \gamma_i x^i$.
It satisfies the following property:
\[ p(x) = (1+x)^{d} \,\gamma\left(p,\frac{x}{(1+x)^2}\right).\]
We say that $p(x)$ is $\gamma$-positive if $\gamma(p,x)$ has nonnegative coefficients.

Let $\langle \cdot,\cdot\rangle: \Z\langle \mathbf{c},\mathbf{d} \rangle \times \Z \langle \mathbf{c},\mathbf{d} \rangle \to \Z $ denote the bilinear form such that $\langle u,v\rangle=\delta_{u,v}$ for every $cd$-monomial $u$ and $v$.  For every linear functional $\psi: \Z \langle \mathbf{c},\mathbf{d} \rangle_n \to \Z$ on the homogeneous $cd$-polynomials of degree $n$, there exists a homogeneous $cd$-polynomial $u$ of degree $n$ such that $\psi(v)=\langle u,v\rangle$ for every $v \in \Z\langle \mathbf{c},\mathbf{d} \rangle_n$. In this section, we will identify the linear functionals $\psi$ with their corresponding $cd$-polynomial $u$.
In particular, for every $0\leq i\leq\lfloor n/2\rfloor$, let $g_i^{n}$ be the linear functional such that
\begin{equation}\label{eq:toric-g-functional}
 \langle g_i^{n},\Phi_{ P}\rangle=g_i( P),
\end{equation}
for every  Eulerian poset $P$ of rank $n+1$, where $(g_0( P),\dots,g_{\lfloor n/2 \rfloor}(P))$ is the toric $g$-vector of $P$; we also set $g_0^{0}=1$. We prove two facts about this functional. 
\begin{proposition}\label{prop:polytopal-cd-tools}
The following statements hold.
\begin{enumerate}[\normalfont(i)]
 \item For every every $cd$-monomial $w=\mathbf{c}^{k_0}\mathbf{d}\mathbf{c}^{k_1}\mathbf{d}\cdots \mathbf{d}\mathbf{c}^{k_r}$ of degree $2n$, we have
 \begin{equation} \label{eq:formula_top_functional_toric_g}
 [w]g^{2n}_n =\begin{cases}
        (-1)^{k_0/2+\cdots+k_r/2}
        \prod_{i=0}^rC_{k_i/2},
        &k_0,\ldots,k_r\text{ are all even},\\[4pt]
        0,&\text{otherwise}.
    \end{cases}
 \end{equation}
 \item   Let $i$ be a natural number and $u$ be a $cd$-monomial. We have
 \begin{equation} \label{eq:top_functional_left_mult_is_positive}
  \langle ug_i^{2i},\Phi_{P}\rangle \geq 0
 \end{equation}
 for every $P$ which is the face poset of a polytope.
\end{enumerate}
\end{proposition}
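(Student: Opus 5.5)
The plan is to prove the two parts separately. Part (i) is a direct computation with the Bayer--Ehrenborg recursions already quoted in this section. Part (ii) reduces to a known nonnegativity for polytopes, propagated by Ehrenborg's lifting technique.

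\emph{Part (i).} Since $g(w,x)=f(w^{\ast},x)$, and the right-hand side of \eqref{eq:formula_top_functional_toric_g} is invariant under reversing $(k_0,\dots,k_r)$, it suffices to compute the coefficient of $x^n$ in $f(w,x)$ for $w$ of degree $2n$.
\begin{enumerate}[\normalfont(1)]
\item For $w=\mathbf{c}^{k_0}\mathbf{d}v$ with $v=\mathbf{c}^{k_1}\mathbf{d}\cdots\mathbf{d}\mathbf{c}^{k_r}$, I would use \eqref{eq:f(udv)_factorizes} to factor
\[
f(w,x)=f(\mathbf{c}^{k_0},x)\,f(\mathbf{d}v,x).
\]
\item By \eqref{eq:f(dv)}, the second factor equals $f_{\lfloor(m+1)/2\rfloor}(v)\,x^{\lfloor(m+3)/2\rfloor}$, where $m=\deg v$. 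So it is a single monomial carrying the top coefficient of $f(v)$. That top coefficient vanishes unless $m$ is even.
\item The first factor has degree at most $\lfloor k_0/2\rfloor$. Its top coefficient is read off from \eqref{eq:f_i(c^k,x)-as-T_k+1(x)}: it is $(-1)^{k_0/2}C_{k_0/2}$ when $k_0$ is even, and $0$ when $k_0$ is odd.
\item Counting degrees, $x^n$ can only arise by multiplying the top terms of both factors. This also forces every $k_i$ to be even; otherwise the total degree drops below $n$.
\item Induction on $r$ then gives the product formula. The base case $r=0$ is exactly \eqref{eq:f_i(c^k,x)-as-T_k+1(x)}.
\end{enumerate}

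\emph{Part (ii), reduction to the base case.} The functional $g_i^{2i}$ evaluated on an Eulerian poset of rank $2i+1$ is the top toric $g$-coefficient $g_i$. For the face poset of a $2i$-dimensional polytope this coefficient is $h_i-h_{i-1}$, the middle difference of the toric $h$-vector. It is nonnegative by the hard Lefschetz theorem for combinatorial intersection cohomology of polytopes \cite{karu04}. The same holds for face posets of dual polytopes, since these are again polytopal. This settles the case where $u$ is the empty word.

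\emph{Part (ii), lifting to arbitrary $u$.} I would invoke Ehrenborg's lifting theorem: if a $\mathbf{cd}$-functional $z$ is nonnegative on the $\mathbf{cd}$-indices of all polytopes of the appropriate dimension, then so is $uz$, for every $\mathbf{cd}$-monomial $u$. To see why this is plausible, peel off the letters of $u$ one at a time from the left. Multiplication by $\mathbf{c}$ or $\mathbf{d}$ on the left of a functional corresponds, through the coalgebra identity \eqref{eq:cd-index_is_coalgebra_morphism} and the behaviour of the $\mathbf{cd}$-index under pyramids and prisms, to a nonnegative combination of evaluations of $z$ on upper intervals $[F,\one]$. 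These intervals are face posets of faces of the dual polytope, hence again polytopal, so nonnegativity propagates.

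The main obstacle is matching conventions. One must check that Ehrenborg's theorem is stated for left multiplication in the same orientation as $ug_i^{2i}$ here, as opposed to right multiplication or the reversed word $w^{\ast}$. If the orientations differ, I would use $\Phi_{P^{\ast}}=(\Phi_P)^{\ast}$ together with the fact that polar duals of polytopes are polytopes to transport the inequality to the required form.
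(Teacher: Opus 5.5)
Your part (i) is fine. Deriving it from the Bayer--Ehrenborg recursions for $f$ quoted in this section is a valid alternative to the paper's direct appeal to their Theorem~4.2. One correction to step (4): when $k_0$ is odd, the degree bound $\lfloor k_0/2\rfloor+\lfloor (m+3)/2\rfloor$ still equals $n$. The term dies because $f(\mathbf{d}v,x)=0$ for odd $m$, as in your step (2), not because the degree drops.

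The gap is in part (ii). You invoke the principle that any functional nonnegative on all polytopes of the relevant dimension stays nonnegative after multiplication by an arbitrary $\mathbf{cd}$-monomial. This is false, in either orientation. Take $z=2\mathbf{cd}-\mathbf{dc}$.
\begin{itemize}
\item For a $3$-polytope, $\Phi=\mathbf{c}^3+(f_0-2)\mathbf{dc}+(f_2-2)\mathbf{cd}$, so $\langle z,\Phi\rangle=2f_2-f_0-2\geq 2$ since $f_0\leq 2f_2-4$.
\item For a $4$-polytope, the coefficients of $\mathbf{c}^2\mathbf{d}$ and $\mathbf{cdc}$ are $f_3-2$ and $f_1-f_0$. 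So for the $4$-cube, $\langle\mathbf{c}z,\Phi\rangle=2(f_3-2)-(f_1-f_0)=12-16=-4$.
\item Dually, $\langle z^{\ast}\mathbf{c},\Phi\rangle=-4$ for the $4$-dimensional cross-polytope.
\end{itemize}
Your coproduct heuristic fails at exactly this point. Pairing $1\otimes z$ with \eqref{eq:cd-index_is_coalgebra_morphism} gives
\[
\sum_{v}\langle z,\Phi_{[v,\widehat{1}]}\rangle=\langle 2\mathbf{c}z+\mathbf{d}z',\Phi_P\rangle,
\]
where the sum is over atoms $v$ and $z'=\sum_y\langle z,\mathbf{c}y\rangle\,y$. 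The correction term (here $2\mathbf{d}^2$) cannot be discarded.

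So nonnegativity of $g_i$ on $2i$-polytopes, your base case via Karu, does not propagate by itself. You need a lifting result specific to the toric $g$ functional. The paper uses Ehrenborg's Theorem~4.4, which gives $\langle g_i^{2i}u,\Phi_P\rangle\geq 0$ for $i\geq 2$, with $u$ on the \emph{right}.

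To move $u$ to the left, $\Phi_{P^\ast}=(\Phi_P)^\ast$ is not enough on its own. That theorem applies to $g_i^{2i}$ itself, not to an arbitrary nonnegative functional such as $(g_i^{2i})^\ast$. The missing ingredient is the self-duality $(g_i^{2i})^\ast=g_i^{2i}$ from Bayer--Ehrenborg (Theorem~5.1).

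The small cases are handled separately:
\begin{itemize}
\item $i=0$ follows from nonnegativity of the $\mathbf{cd}$-index.
\item $i=1$ uses $g_1^2=\mathbf{d}-\mathbf{c}^2$ and Ehrenborg's inequality $\langle u\mathbf{d},\Phi_P\rangle\geq\langle u\mathbf{c}^2,\Phi_P\rangle$, which applies directly.
\end{itemize}
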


\begin{proof}
The statement of (i) follows directly from Theorem~4.2 of \cite{bayer-ehrenborg}. 
In order to prove (ii), we start by noticing that the inequality \eqref{eq:top_functional_left_mult_is_positive} is equivalent to $\langle (g_i^{2i})^{\ast}u^{\ast},\Phi_{P^{\ast}}\rangle \geq 0$. Hence, we may prove that
\[
\langle (g_i^{2i})^{\ast}u,\Phi_{P}\rangle \geq 0
\]
for every $cd$-monomial $u$ and every face poset $P$ of a polytope. Since $(g_i^{2i})^{\ast}$ is equal to $g_i^{2i}$ \cite[Theorem~5.1]{bayer-ehrenborg}, applying Theorem~4.4 of \cite{ehrenborg-lifting} proves the statement for $i \geq 2$. If $i=0$, then $g^0_0=1$, so the statement follows from the nonnegativity of the $cd$-index of the face poset of a polytope \cite{karu}. If $i=1$, then one may calculate that $g^2_1=\mathbf{d}-\mathbf{c}^2$. Using Theorem~3.7 of \cite{ehrenborg-lifting}, we get that $\langle u\mathbf{d},\Phi_P\rangle  \geq\langle u\mathbf{c}^2,\Phi_P\rangle$, which proves the remaining case. 
\end{proof}
We now study the $\gamma$-polynomial of the right augmented Chow polynomial. Denote by $\mathcal{W}_n$ the set of ${cd}$-monomials of degree $n$. For any natural number $n$ and any $0\leq q\leq\lfloor(n+2)/2\rfloor$, we define $L_{n,q}:=\sum_{w\in\mathcal{W}_n}[t^q]\gamma_F(w,t)\,w$. Thus,
\[
\gamma_{F_P}(t) = \sum_{q \geq 0} \langle L_{n,q},\Phi_P\rangle \, t^q,
\]
where $P$ is the face poset of an $n$-dimensional polytope. 

For any natural number $\ell$ and $a=(a_0,\ldots,a_{\ell-1})\in\mathbb Z_{\geq0}^\ell$, set also $s(a):=a_0+\cdots+a_{\ell-1}$ and $u(a):=\mathbf{c}^{a_0}\mathbf{d}\mathbf{c}^{a_1}\mathbf{d}\cdots \mathbf{c}^{a_{\ell-1}}\mathbf{d}$, with $s(\emptyset)=0$ and $u(\emptyset)=1$. We now give a formula for the $cd$-polynomial corresponding to $L_{n,q}$.

\begin{proposition}\label{prop:Lnq-positive-decomposition}
Let $q\geq1$ and set $p:=n-2q+2$.  Then
\begin{align} \label{eq:Lnq-positive-decomposition}
 L_{n,q}
 ={}&\sum_{\ell=0}^{q-1} 2^\ell \left(
 \sum_{\substack{a\in\mathbb Z_{\geq0}^{\ell}\\ p-s(a) \geq 1}} 
 u(a)\,\mathbf{c}^{p-s(a)} g_{q-\ell-1}^{2(q-\ell-1)} 
 +
 \sum_{\substack{a\in\mathbb Z_{\geq0}^{\ell}\\ p-s(a) \geq 2}} 
 u(a)\,\mathbf{c}^{p-s(a)-2}\mathbf{d} g_{q-\ell-1}^{2(q-\ell-1)} \right)
 \\
 &+2^q
 \sum_{\substack{a_0,\ldots,a_q\geq0\\a_0+\cdots+a_q=n-2q}}
 \mathbf{c}^{a_0}\mathbf{d}\mathbf{c}^{a_1}\mathbf{d}\cdots
 \mathbf{d}\mathbf{c}^{a_q}.
 \notag
\end{align}
\end{proposition}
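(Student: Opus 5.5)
The plan is to compute $L_{n,q}$ directly from the closed formula for $\gamma_F(w,t)$ in Proposition~\ref{prop:gamma-component-formula}. Then I reorganize the resulting sum over monomials $w$ so that the suffix factors assemble into the toric $g$-functionals described in Proposition~\ref{prop:polytopal-cd-tools}(i). Write $w=\mathbf{c}^{k_0}\mathbf{d}\cdots\mathbf{d}\mathbf{c}^{k_r}$ and expand
\[
\gamma_F(w,t)=t^r\sum_{\ell=0}^r 2^\ell s_{k_\ell}(t)\prod_{j>\ell}T_{k_j+1}(t),\qquad s_k(t)=1+t\sum_{i=0}^{k-1}T_{k-i}(t).
\]
Each pair $(w,\ell)$ then splits into the ``$1$''-part of $s_{k_\ell}$ and the parts indexed by $i\in\{0,\dots,k_\ell-1\}$. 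The first observation is that $T_{k+1}(t)=(-1)^{k/2}C_{k/2}t^{k/2}$ for $k$ even and vanishes otherwise. Comparing with \eqref{eq:formula_top_functional_toric_g}, the product $\prod_{j=\ell+1}^r T_{k_j+1}(t)\prod T_{k-i}(t)$ is $t^{(\text{degree})/2-(\#\mathbf{d})}$ times the coefficient of a suitable suffix word in the top functional $g^{2m}_m$. The same identification correctly produces $0$ whenever some block has odd length.

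Concretely, I would treat three types of terms, each reindexed as a sum over prefixes $u(a)$ with $a=(k_0,\dots,k_{\ell-1})$.

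First, the $T_{k_\ell-i}$-terms. Here I factor $\mathbf{c}^{k_\ell}=\mathbf{c}^{i+1}\mathbf{c}^{k_\ell-i-1}$. The word $\mathbf{c}^{k_\ell-i-1}\mathbf{d}\mathbf{c}^{k_{\ell+1}}\cdots\mathbf{d}\mathbf{c}^{k_r}$ is then a monomial of degree $2m$, and its coefficient in $g^{2m}_m$ is exactly the product of $T$'s. These terms give the summands $u(a)\mathbf{c}^{p-s(a)}g^{2(q-\ell-1)}_{q-\ell-1}$, with $p-s(a)=i+1\ge1$.

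Second, the ``$1$''-terms with $\ell<r$. Here I split the word as $\mathbf{c}^{k_\ell}\mathbf{d}$ followed by the suffix $\mathbf{c}^{k_{\ell+1}}\mathbf{d}\cdots\mathbf{c}^{k_r}$, which is read as a $g$-word. These give $u(a)\mathbf{c}^{p-s(a)-2}\mathbf{d}\,g^{2(q-\ell-1)}_{q-\ell-1}$.

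Third, the ``$1$''-terms with $\ell=r$. These give $t^r2^r$ times $w$, hence the final sum over words with exactly $q$ letters $\mathbf{d}$.

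In each case I would verify the power of $t$ by degree counting. For instance, in the first type the exponent is $r+1+\tfrac{k_\ell-i-1}{2}+\sum_{j>\ell}\tfrac{k_j}{2}$. Using $n=s(a)+2\ell+(i+1)+2m$, this equals $q$ exactly when $m=q-\ell-1$ and $i+1=p-s(a)$. The analogous computation for the second type gives the shift by $2$.

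The main obstacle I expect is the bookkeeping of the change of summation. I must check that the map $(w,\ell,\text{choice in }s_{k_\ell})\mapsto(a,\text{middle block},\text{suffix monomial})$ is a bijection onto the index sets of \eqref{eq:Lnq-positive-decomposition}, including the boundary cases. These are: $\ell=r$ with the $T$-part, where the suffix has no $\mathbf{d}$ and is paired with $g^{2m}_m$ on pure $\mathbf{c}$-words; $m=0$, where $g^0_0=1$; and the constraint $\ell\le q-1$, which holds because $m\ge0$. I would first verify the identity on small cases such as $w=\mathbf{c}^k$ and $w=\mathbf{d}$ as a sanity check on the indices, and then carry out the general reindexing.
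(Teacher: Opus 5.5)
Your proposal is correct and is essentially the paper's own proof. The paper likewise expands $[t^q]\gamma_F(w,t)$ via Proposition~\ref{prop:gamma-component-formula} and separates the constant term of $s_{a_\ell}$ (its case $h=0$, split into $\ell<r$ and $\ell=r$) from the single surviving term $tT_{2h-1}$ (its case $h\ge 1$, $a_\ell\ge 2h-1$); it then identifies the signed Catalan products with coefficients of $g^{2(q-\ell-1)}_{q-\ell-1}$ through Proposition~\ref{prop:polytopal-cd-tools}(i) and regroups by the prefix $u(a)$, with your degree count (forcing $m=q-\ell-1$ and $i+1=p-s(a)$, respectively $k_\ell=p-s(a)-2$) being exactly the bookkeeping the paper carries out more tersely.
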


\begin{proof}
Let $w=\mathbf{c}^{a_0}\mathbf{d}\mathbf{c}^{a_1}\mathbf{d}\cdots
\mathbf{d}\mathbf{c}^{a_r}$.  
By Proposition~\ref{prop:gamma-component-formula}, we have
\begin{align*}
[t^q] \gamma_F(w,t) &= [t^q] \left( t^r \sum_{\ell = 0}^r 2^{\ell} s_{a_{\ell}}(t) \prod_{j = \ell+1}^{r} T_{a_j+1}(t) \right)\\
&=  \sum_{\ell = 0}^r 2^{\ell} [t^{q-r}]\left(  s_{a_{\ell}}(t) \prod_{j = \ell+1}^{r} T_{a_j+1}(t)\right).
\end{align*}
Let $A_{\ell,q}(w):=[t^{q-r}]\left(s_{a_{\ell}}(t) \prod_{j = \ell+1}^{r} T_{a_j+1}(t)\right)$ for every $q\geq 0$ and $\ell=0,\dots,r$. Notice that 
\[
L_{n,q} =  \sum_{\substack{w \in \mathcal{W}_n}}\left(\sum_{\ell = 0}^{r} 2^{\ell} A_{\ell,q}(w) \right)w.
\]
Given a word $w=\mathbf{c}^{a_0}\mathbf{d}\mathbf{c}^{a_1}\mathbf{d}\cdots
\mathbf{d}\mathbf{c}^{a_r}$ and an integer $\ell = 0,\dots,r$, we have that $A_{\ell,q}(w)$ equals zero if  $a_{\ell+1},\dots,a_r$ are not all even numbers. Hence, we may assume that they are even, and if we set $h := q-r-\frac{ a_{\ell+1} + \cdots + a_r}{2}$, we have
\begin{equation*}
    A_{\ell,q}(w) = \begin{cases}
        (-1)^{ a_{\ell+1}/2+\dots+a_r/2 }\prod_{j=\ell+1}^rC_{a_j/2},
        &h=0,\\[3pt]
        (-1)^{a_{\ell+1}/2+\dots+a_r/2+h-1}C_{h-1}
        \prod_{j=\ell+1}^rC_{a_j/2},
        &h\geq1\text{ and }a_\ell\geq2h-1,\\[3pt]
        0,&\text{otherwise},
    \end{cases} 
\end{equation*}
which, by Proposition~\ref{prop:polytopal-cd-tools} (i), is equivalent to
\begin{equation*}
    A_{\ell,q}(w) = \begin{cases}
        [\mathbf{c}^{a_{\ell+1}}\mathbf{d}\mathbf{c}^{a_{\ell+2}}\mathbf{d}\cdots
\mathbf{d}\mathbf{c}^{a_r}]g^{a_{\ell+1}+\dots+a_r+2(r-\ell-1)}_{a_{\ell+1}/2+\dots+a_r/2+r-\ell-1},
        &h=0\text{ and }\ell < r\\[4pt]
        1,
        &h=0\text{ and }\ell = r,\\[4pt]
        [\mathbf{c}^{2h-2}\mathbf{d}\mathbf{c}^{a_{\ell+1}}\mathbf{d}\mathbf{c}^{a_{\ell+2}}\mathbf{d}\cdots
\mathbf{d}\mathbf{c}^{a_r}]g^{2(q-\ell-1)}_{q-\ell-1},
        &h\geq1\text{ and }a_\ell\geq2h-1,\\[4pt]
        0,&\text{otherwise}.
    \end{cases} 
\end{equation*}
We now collect the contributions of these terms by dividing them into the cases above. Suppose  $w=\mathbf{c}^{a_0}\mathbf{d}\mathbf{c}^{a_1}\mathbf{d}\cdots
\mathbf{d}\mathbf{c}^{a_r}$ is a $cd$-monomial with $h \geq 1$ and $a_\ell\geq2h-1$ and $a=(a_0,a_1,\dots,a_{\ell-1})$. Notice that $(p-s(a))+(2h-2)=a_{\ell}$. Then, $w$ can be written as
\[
w=u(a) \mathbf{c}^{p-s(a)} \mathbf{c}^{2h-2}\mathbf{d}\mathbf{c}^{a_{\ell+1}}\mathbf{d}\cdots\mathbf{d}\mathbf{c}^{a_r}.
\]
Thus, these terms contribute
\[
    \sum_{\ell=0}^{q-1}2^\ell
    \sum_{\substack{a\in\mathbb Z_{\geq0}^{\ell}\\p-s(a)\geq1}}
    u(a)\,\mathbf{c}^{p-s(a)} g_{q-\ell-1}^{2(q-\ell-1)}.
\]
Similarly, if $w=\mathbf{c}^{a_0}\mathbf{d}\mathbf{c}^{a_1}\mathbf{d}\cdots
\mathbf{d}\mathbf{c}^{a_r}$ is a $cd$-monomial with $h=0$ and $\ell<r$ and $a=(a_0,a_1,\dots,a_{\ell-1})$, $w$ can be written as
\[
w=u(a) \mathbf{c}^{p-s(a)-2} \mathbf{d}\mathbf{c}^{a_{\ell+1}}\mathbf{d}\cdots\mathbf{d}\mathbf{c}^{a_r},
\]
and these terms contribute
\[
\sum_{\ell=0}^{q-1}2^\ell
    \sum_{\substack{a\in\mathbb Z_{\geq0}^{\ell}\\p-s(a)\geq2}}
    u(a)\,\mathbf{c}^{p-s(a)-2}\mathbf{d}
    g_{q-\ell-1}^{2(q-\ell-1)}.
\]
The last remaining case with a non-zero contribution is that of $cd$-monomials with $h=0$ and $\ell=r$, whose contribution is
\[
2^q\sum_{\substack{a_0,\ldots,a_q\geq0\\
                       a_0+\cdots+a_q=n-2q}}
    \mathbf{c}^{a_0}\mathbf{d}\mathbf{c}^{a_1}\mathbf{d}\cdots
    \mathbf{d}\mathbf{c}^{a_q}.
\]
By adding the three expressions, we get the formula in the statement.
\end{proof}

\begin{theorem}\label{thm:polytopal-augmented-gamma}
Let $P$ be the face poset of a polytope. The right and left augmented Chow polynomials of $P$ are $\gamma$-positive.
\end{theorem}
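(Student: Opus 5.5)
The natural route is to read off the $\gamma$-coefficients of $F_P$ from the decomposition of $L_{n,q}$ in Proposition~\ref{prop:Lnq-positive-decomposition}, and to check that each summand pairs nonnegatively with the $\mathbf{cd}$-index of a polytope. Let $P=\mathscr{F}(\mathcal{P})$ with $\mathcal{P}$ of dimension $n$, so $P$ has rank $n+1$ and $\Phi_P$ is homogeneous of degree $n$. By the theorem expressing $F_P(x)=F(\Phi_P,x)$ and by linearity of $w\mapsto\gamma_F(w,t)$, we have
\[
\gamma_{F_P}(t)=\sum_{q\ge 0}\langle L_{n,q},\Phi_P\rangle\, t^q .
\]
It therefore suffices to prove $\langle L_{n,q},\Phi_P\rangle\ge 0$ for every $q$.

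For $q=0$, Proposition~\ref{prop:gamma-component-formula} gives $[t^0]\gamma_F(w,t)=1$ when $w=\mathbf{c}^n$ and $0$ otherwise (a $\mathbf{d}$ contributes a factor $t$). Hence $\langle L_{n,0},\Phi_P\rangle=[\mathbf{c}^n]\Phi_P=1$.

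For $q\ge1$ I will expand \eqref{eq:Lnq-positive-decomposition} and treat its three types of terms separately.
\begin{enumerate}[\normalfont(a)]
\item The last sum is a nonnegative integer combination of $\mathbf{cd}$-monomials. It pairs nonnegatively with $\Phi_P$ by Karu's nonnegativity of the $\mathbf{cd}$-index of Gorenstein* posets \cite{karu}, which applies to face posets of polytopes.
\item The first two inner sums consist of terms $2^\ell\, u\, g_{i}^{2i}$ with $i=q-\ell-1\ge 0$. Here $u$ is a $\mathbf{cd}$-monomial, namely $u(a)\mathbf{c}^{p-s(a)}$ or $u(a)\mathbf{c}^{p-s(a)-2}\mathbf{d}$.
\item Each such term satisfies $\langle u g_i^{2i},\Phi_P\rangle\ge0$ by Proposition~\ref{prop:polytopal-cd-tools}(ii). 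The coefficients $2^\ell$ are positive, so all contributions are nonnegative.
\end{enumerate}

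The step I expect to require care is a bookkeeping one rather than a new idea. I need to check that the juxtaposition $u\,g_i^{2i}$ in \eqref{eq:Lnq-positive-decomposition} means exactly the functional whose pairing with $\Phi_P$ is controlled by Proposition~\ref{prop:polytopal-cd-tools}(ii). That is, it is the degree-$n$ $\mathbf{cd}$-polynomial obtained by left-multiplying $g_i^{2i}$ by $u$, and its degrees add up: $\deg u + 2i = n$. For the first family this reads $2\ell + s(a) + (p-s(a)) + 2(q-\ell-1)=n$, which holds since $p=n-2q+2$; the second family is analogous. I also need the boundary case $i=0$, where $g_0^0=1$, to be covered; this was handled in part (ii) of that proposition.

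This proves the statement for the right augmented Chow polynomial. For the left one, I would use the skew-symmetry remark from the introduction: $G_P$ is the right augmented Chow polynomial of the dual poset $P^*$. In other words, $G_P=F_{P^*}$, obtained by applying the construction to $\varepsilon$ on $P^*$, and the paper already notes that $f_P$ is the toric $g$-polynomial of $P^*$. Since $P^*=\mathscr{F}(\mathcal{P}^*)$ is again the face poset of a polytope, the first part applies to $P^*$ and gives $\gamma$-positivity of $G_P$.
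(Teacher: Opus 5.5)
Your proposal is correct and is essentially the paper's own proof. The paper likewise reads the $\gamma$-coefficients off as $\langle L_{n,q},\Phi_P\rangle$ and gets the constant term $1$ from $L_{n,0}=\mathbf{c}^n$; it bounds the first two sums of \eqref{eq:Lnq-positive-decomposition} via Proposition~\ref{prop:polytopal-cd-tools}(ii) and the last sum via nonnegativity of the polytopal $\mathbf{cd}$-index, and it handles the left augmented polynomial through $G_P=F_{P^*}$, where $P^*$ is the face poset of the polar dual. Your degree bookkeeping ($\deg u+2i=n$) is a sanity check that the paper leaves implicit.
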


\begin{proof}
Let $P$ be the face poset of a polytope of dimension $n$. It is sufficient to prove the $\gamma$-positivity of the right augmented polynomial, as the left augmented polynomial of $P$ is the right augmented polynomial of the dual $P^{\ast}$. 

Note that the constant coefficient of $\gamma_{F_P}(x)$ is $\langle L_{n,0},\Phi_P\rangle
 =\langle\mathbf{c}^n,\Phi_P\rangle=1$.
Let $q$ be an integer such that $1\leq q\leq\lfloor(n+1)/2\rfloor$. Each term in the first two sums of \eqref{eq:Lnq-positive-decomposition} is a functional that is nonnegative on $\Phi_P$ by Proposition~\ref{prop:polytopal-cd-tools} (ii). Similarly, the last sum is nonnegative on $\Phi_P$ because the $cd$-index of a polytope has nonnegative coefficients \cite{karu}. Consequently, $\langle L_{n,q},\Phi_P\rangle\geq0$ for every $q$, proving that $F_P(x)$ is  $\gamma$-positive.
\end{proof}

\section{Counterexamples}\label{sec:counterexamples}

\subsection{Failure of nonnegativity for Eulerian posets}
By \cite[Theorem~1.3]{ferroni-matherne-vecchi}, for the characteristic kernel, Chow and left augmented Chow functions are known to be nonnegative (and, in fact, also unimodal). It is thus natural to wonder if the same property holds to Chow and augmented Chow polynomials in the Eulerian case. For Eulerian Chow polynomials, this was already raised by Ferroni, Matherne, and Vecchi in \cite[Question~5.5]{ferroni-matherne-vecchi}. The answer to both questions is that they may have negative coefficients without further conditions on the poset. 

\begin{theorem}
    There exist Eulerian posets whose Chow polynomial and augmented Chow polynomials have negative coefficients.
\end{theorem}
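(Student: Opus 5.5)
Since the statement is existential, I plan to exhibit explicit Eulerian posets and compute. By \cite[Theorem~5.4]{ferroni-matherne-vecchi}, $\H_P(x)$ is the $h$-polynomial of the proper order complex $\Delta(\overline P)$. So the task for the Chow polynomial is to find an Eulerian $P$ whose $\Delta(\overline P)$ has a negative $h$-entry. Equivalently, $\Phi_P(1+x,2x)$ should have a negative coefficient, which forces the $\mathbf{cd}$-index to have negative entries. By \cite{karu} this in turn forces $P$ to be non-Cohen--Macaulay.

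The construction will build $\overline P$ out of pieces that are themselves Gorenstein* (face posets of spheres), glued or placed side by side. I will then adjoin $\zero,\one$ and choose the gluing so that all proper intervals stay Eulerian and the single global condition $\mu_P(\zero,\one)=\widetilde\chi(\Delta(\overline P))=(-1)^{\operatorname{rk}P}$ holds. The key computation uses the disjoint-union rule. If $\Delta(\overline P)=\Delta_1\sqcup\dots\sqcup\Delta_k$ is pure of dimension $d-1$, then
\[
h_{\Delta(\overline P)}(x)=\sum_{j=1}^k h_{\Delta_j}(x)-(k-1)(1-x)^d .
\]
When $d$ is odd, the correction term is positive in the middle coefficients, so I will look for parameters for which $\sum_j h_{\Delta_j}$ is small there; gluing along subcomplexes works similarly via inclusion--exclusion. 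The correction has alternating signs, so the choice of $d$ and of the pieces has to be made with care. Low-rank cases do not work: rank $\le 4$ is excluded outright, because Dehn--Sommerville together with $h_1=f_0-d\ge 0$ forces nonnegativity there. I will therefore first run a small computer search over ranks $5$ and $6$, among posets glued from boolean lattices and butterflies, to locate a candidate.

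Once a candidate $P$ is found, I will verify the Eulerian property interval by interval. Lower and upper intervals inside a piece are inherited from the pieces, and the whole interval is checked through $\widetilde\chi$. I will then compute $\Phi_P$ and evaluate $\H(\Phi_P,x)=\Phi_P(1+x,2x)$ to read off a negative coefficient. For the augmented polynomials I will use the formula $F(\Phi_P,x)$ from the theorem expressing $F_P$ through the $\mathbf{cd}$-index, together with Lemma~\ref{lem:cd-component-recursions}; equivalently I can compute $h_{\Stell(P)}$ by Theorem~\ref{thm:augChow_is_toric-h_of_Stell}. The left augmented polynomial is obtained by applying the same computation to $P^*$, using $\Phi_{P^*}=\Phi_P^*$. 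Since a negative $\mathbf{cd}$-coefficient on a word beginning with $\mathbf{d}$ feeds directly into $F$ through the term $2xF(w,x)$, I expect the same example, or a small modification of it, to also give a negative coefficient in $F_P$ and $G_P$.

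The main obstacle is the first step: actually finding a poset where the negativity appears. Naive disjoint unions or wedges of spheres tend to keep all $h$-entries positive, because Dehn--Sommerville pins down the end coefficients. What I would try first is making the pieces differ in rank parity across a pinched gluing, so that the correction term dominates a middle coefficient. If that fails, I will rely on an exhaustive computer enumeration of small non-Cohen--Macaulay Eulerian posets of rank $5$ and $6$, and check the three polynomials numerically.
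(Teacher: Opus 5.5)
Your proposal does not yet prove the statement. The statement is existential, you never exhibit a poset, and the search you specify cannot succeed for the Chow polynomial.

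\textbf{The planned search space is empty.} The counting you used for $h_1$ extends to $h_2$. In an Eulerian poset of rank $d+1$, every rank level $1,\dots,d$ has at least two elements, since in a maximal chain each $[x_{i-1},x_{i+1}]$ is a diamond. The same holds inside every interval. Hence each $x\in\overline P$ of rank $i$ is comparable to at least $2(i-1)+2(d-i)=2(d-1)$ elements of $\overline P$. So $f_1(\Delta(\overline P))\ge (d-1)f_0$, and
\[
h_2=\binom{d}{2}-(d-1)f_0+f_1\;\ge\;\binom{d}{2}>0 .
\]
Since $\H_P$ is palindromic, for $d\le 5$ every coefficient is one of $h_0,h_1,h_2$. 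So $\H_P$ has positive coefficients for every Eulerian poset of rank $\le 6$. The enumeration over ranks $5$ and $6$ is therefore guaranteed to fail for the Chow polynomial. A negative coefficient can first appear in $h_3$, i.e.\ in rank $\ge 7$.

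\textbf{The disjoint-union device also fails.} Suppose $\Delta(\overline P)=\Delta_1\sqcup\dots\sqcup\Delta_k$ with $k\ge 2$ and the pieces come from Eulerian posets. Then $\tilde\chi=k(-1)^{d+1}+(k-1)$, which equals $(-1)^{d+1}$ only when $d$ is even. So for $d$ odd, the case you single out, the result is never Eulerian. For $d$ even with Gorenstein* pieces, Karu's theorem gives $\H_{P_j}(x)-(1+x)^d\ge 0$ coefficientwise, so $h_i(\Delta_j)\ge\binom{d}{i}$. Then
\[
h_i(\Delta)\ge k\tbinom{d}{i}-(k-1)\tbinom{d}{i}=\tbinom{d}{i}>0 .
\]

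\textbf{What is actually needed, and what the paper does.} You need a genuinely non-Cohen--Macaulay Eulerian family in rank at least $7$, with a free parameter that drives some $\mathbf{cd}$-coefficients arbitrarily negative while keeping every interval Eulerian. The paper takes this from Bayer--Hetyei: the horizontal double of $P(8,\{[1,2],[3,6],[5,8]\},19)$. It is Eulerian because the interval system is even, and it has rank $9$. Its Chow and right augmented Chow polynomials are then computed explicitly. $\H_P$ has coefficient $-125786$ at $x^4$, and $F_P$ has coefficient $-1108$ at both $x^4$ and $x^5$.

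\textbf{The augmented part needs an actual computation.} Your claim that a negative coefficient on a word beginning with $\mathbf{d}$ ``feeds directly'' into $F$ is only a heuristic. Each $F(w,x)$ is nonnegative, so whether the negative terms win must be checked by evaluating $F(\Phi_P,x)$ on a concrete example. Your $\mathbf{cd}$-formula would do that once you have one. Handling the left augmented polynomial via $P^*$ is fine.
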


The specific question for Chow polynomials was also raised in MathOverflow in July 2024\footnote{See \url{https://mathoverflow.net/questions/475156/eulerian-posets-and-order-complexes}.} receiving no answers. 

\begin{example}\label{ex:counterexample-eulerian-positivity}
We make use of the constructions and the notation of Bayer and Hetyei in \cite{bayer_flag_1999}. 
Let $P$ be the horizontal double of $P(n, \mathcal{I},N)$ for $n=8$, $\mathcal{I}=\{[1,2],[3,6],[5,8]\}$ and $N=19$. Since $\mathcal{I}$ is an even interval system on $[1,n]$, $P$ is an Eulerian poset. 
One may calculate that its Eulerian Chow polynomial is
\[
1 + 1664 x + 128620x^2 +810560x^3 - 125786x^4 +810560x^5 +128620x^6 +1664x^7 +x^8
\]
and its right augmented Chow polynomial is
\[
1+1702x+136410x^2 +1182162x^3 - 1108x^4 - 1108x^5 +1182162x^6 +136410x^7 +1702x^8 +x^9.
\]
Both of the above polynomials have negative coefficients.
\end{example}

\subsection{Failure of real-rootedness and log-concavity for Gorenstein* posets}

The $\gamma$-positivity of the Eulerian Chow polynomial of a
Gorenstein* poset does not extend to real-rootedness or even to
log-concavity. This, in particular, answers negatively a question posed by Athanasiadis and Kalampogia-Evangelinou in \cite[Question~5.2]{athanasiadis-kalampogia}. The counterexample below was found with the assistance of Chat GPT 5.6 Sol. Unlike the preceding example, here essential input was provided by the authors: after AI failed many times to produce any examples, we suggested exploiting a technique called ``unzipping'', due to Murai and Nevo \cite{murai-nevo}.

If $x$ covers $y$ in a graded poset $P$, write
$\mathcal U(P;x,y)$ for the poset obtained by unzipping this cover.  Unzipping
preserves the Gorenstein* property, and its effect on the
$\mathbf{cd}$-index is
\begin{equation}\label{eq:unzipping-cd-index}
 \Phi_{\mathcal U(P;x,y)}
 =\Phi_P+\Phi_{[\zero,y]}\,\mathbf{d}\,\Phi_{[x,\one]}.
\end{equation}
These facts are proved in \cite[Corollary~2.6 and Lemma~2.7]{murai-nevo}.
For $N\geq1$, let $\mathcal U^N(P;x,y)$ denote the result of applying
unzipping $N$ times in succession, first to $y\lessdot x$ and thereafter to
the newly created cover $y'\lessdot x'$.  Repeated use of
\eqref{eq:unzipping-cd-index} gives
\begin{equation}\label{eq:iterated-unzipping-cd-index}
 \Phi_{\mathcal U^N(P;x,y)}
 =\Phi_P+N\Phi_{[\zero,y]}\,\mathbf{d}\,\Phi_{[x,\one]}.
\end{equation}

For an Eulerian poset $P$ of rank $d+1$, put
\[
 \Gamma_P(t):=\Phi_P(1,2t).
\]
Since $\H_P(x)=\Phi_P(1+x,2x)$, the polynomial $\Gamma_P(t)$ is the
$\gamma$-polynomial of $\H_P(x)$.  In particular,
\begin{equation}\label{eq:unzipping-gamma-polynomial}
 \Gamma_{\mathcal U(P;x,y)}(t)
 =\Gamma_P(t)+2t\,\Gamma_{[\zero,y]}(t)\Gamma_{[x,\one]}(t).
\end{equation}

Let $R_m$ denote the face poset of an $m$-gon, and let $\ast$ denote the
join of bounded posets.  Recall that
\[
 \Phi_{R_m}=\mathbf{c}^2+(m-2)\mathbf{d},
 \qquad
 \Gamma_{R_m}(t)=1+2(m-2)t.
\]

\begin{theorem}\label{thm:gorenstein-chow-not-log-concave}
There exists a Gorenstein* poset whose Eulerian Chow polynomial is not
log-concave.  In particular, the Chow polynomial of a Gorenstein*
poset need not be real-rooted.
\end{theorem}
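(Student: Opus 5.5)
The construction starts from a join of polygon face posets and then repeatedly unzips a single cover relation, as the setup preceding the statement suggests. The Chow polynomial is $\H_P(x)=(1+x)^d\,\Gamma_P\bigl(x/(1+x)^2\bigr)$. Log-concavity of $\H_P$ can fail if $\Gamma_P(t)$ has one huge coefficient in a middle degree, with the lower-degree coefficients comparatively small. In that case $\H_P$ is approximately $N\cdot c\,x^k(1+x)^{d-2k}$ plus a smaller polynomial $\Gamma_0$-contribution. Adding these can create an internal dip, or a violation of $h_i^2\ge h_{i-1}h_{i+1}$, near the boundary where one summand takes over from the other. The target is rank $9$, so $d=8$, and the proper order complex must be a flag PL $7$-sphere.

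\textbf{Step 1 (base poset).} I would take $P_0=R_{m_1}\ast R_{m_2}\ast R_{m_3}\ast R_{m_4}$, a join of four polygon face posets. This has rank $9$ and is Gorenstein*. Its proper order complex is a join of four polygons (for $m_i\ge4$), hence a flag PL $7$-sphere, and $\Gamma_{P_0}(t)=\prod_i(1+2(m_i-2)t)$. Since $\Gamma$ is multiplicative under joins, this product formula holds.

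\textbf{Step 2 (unzipping).} I would choose a cover $y\lessdot x$ so that $[\zero,y]$ and $[x,\one]$ have controlled $\Gamma$. For instance, take $y$ and $x$ in different join factors so that the intervals are small joins. By \eqref{eq:iterated-unzipping-cd-index} and \eqref{eq:unzipping-gamma-polynomial},
\[
\Gamma_{\mathcal U^N(P_0;x,y)}(t)=\Gamma_{P_0}(t)+2Nt\,\Gamma_{[\zero,y]}(t)\Gamma_{[x,\one]}(t).
\]
Because unzipping preserves Gorenstein* (Murai--Nevo), the resulting poset qualifies for every $N$. I would also check that unzipping preserves flagness and PL-sphericity, so that the final remark of the theorem holds.

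\textbf{Step 3 (parameter search, the main obstacle).} Here I would tune $m_i$, the cover, and $N$ so that some ratio $h_i^2/(h_{i-1}h_{i+1})$ drops below $1$. A natural choice makes $\Gamma_{[\zero,y]}\Gamma_{[x,\one]}$ nearly a monomial of high degree, for example by taking both intervals of small rank so that the added term is $2Nt$ times a low-degree factor. The extra term then concentrates at $t^1$ or $t^2$, while the polygon parameters $m_i$ are made large to inflate a different $\gamma$-degree. This creates two competing ``bumps'' in $\H$. Since $\H$ is symmetric of degree $8$ with all $\gamma_j\ge0$, the candidate violation is at a low index, e.g. $h_2^2<h_1h_3$. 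This happens when $\gamma_1$ is moderate but $\gamma_3$ or $\gamma_4$ is enormous relative to $\gamma_1,\gamma_2$.

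I would first try large $m_i$ so that $\gamma_4\sim\prod_i 2m_i$ dominates, combined with an unzip adding only to $\gamma_1$, and then explicitly compute $h_0,\dots,h_4$ from the $\gamma$-vector. The failure of log-concavity follows by exhibiting the explicit polynomial for concrete parameters. Non-real-rootedness then follows because a real-rooted polynomial with nonnegative coefficients is log-concave (Newton's inequalities).

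The risk is that joins of polygons give $\gamma$-vectors that are too "balanced". If so, I would instead aim $N$ at inflating a single middle coefficient, such as $\gamma_2$, against a tiny $\gamma_1$, by choosing triangles ($m=3$, contributing factor $1+2t$) where smallness is needed.
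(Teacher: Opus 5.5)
Your overall strategy matches the paper: start from a star product of four polygon posets, boost the $\gamma$-vector by iterated unzipping, and look for $h_2^2<h_1h_3$. But the proposal has a genuine gap. It never exhibits a poset, and the mechanism you plan to tune in Step~3 provably cannot work.

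In $P_0=R_{m_1}\ast\cdots\ast R_{m_4}$, every cover $y\lessdot x$ of the proper part is either $v_i\lessdot e_i$ inside one factor or $e_i\lessdot v_{i+1}$ between consecutive factors. So $[\zero,y]$ and $[x,\one]$ together always contain at least two entire polygon factors. Their ranks always sum to $8$, so ``both intervals of small rank'' is impossible, and no cover of $P_0$ adds only to $\gamma_1$. Worse, let $a_j(x)=1+(2m_j-2)x+x^2$ be the $h$-polynomial of the $2m_j$-cycle. Then $N$-fold unzipping of one such cover gives
\[
\H=a_ja_ka_l\,(a_i+2Nx)\qquad\text{or}\qquad \H=a_aa_b\bigl(a_ca_d+2Nx(1+x)^2\bigr).
\]
The palindromic quartic factor is real-rooted. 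With $u=x+x^{-1}$ it equals $x^2\bigl[(u+p)(u+q)+2N(u+2)\bigr]$ with $p,q\geq 2$. This quadratic in $u$ has discriminant $(p-q)^2+4N(p+q-4)+4N^2\geq 0$, takes the value $(p-2)(q-2)\geq 0$ at $u=-2$, and has its vertex left of $-2$, so both $u$-roots are $\leq -2$. Hence for every choice of the $m_i$ (triangles included) and every $N$, your construction yields a real-rooted, and thus log-concave, Chow polynomial. No parameter search within this family can succeed.

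The missing idea is to first manufacture a cover whose two intervals both have $\Gamma=1$, so that each iteration adds exactly $2t$. The paper does this with four preliminary unzippings along the zigzag $e_3\lessdot u_4$, $v_3\lessdot e_3'$, $e_2\lessdot v_3'$, $v_2\lessdot e_2'$. The newly created copies carry thin upper intervals (ultimately coming from the rank-$2$ interval $[u_4,\one]$), so $\Gamma_{[v_2',\one]}=1$, while $[\zero,e_1]$ is Boolean of rank $2$. Unzipping $e_1\lessdot v_2'$ then gives a pure $\gamma_1$-boost not multiplied by any polygon factor, which is exactly what breaks the factorization above. With all $m_i=20$ (so $A=1+36t$) and $215$ iterations, $\Gamma_Q=A^4+4t(A^2+A)+430t$. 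The explicit check $11728^2-590\cdot 233426<0$ then completes the proof. Such a concrete computation is indispensable, since the existence claim rests on it.
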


\begin{proof}
Let $R=R_{20}$, and let $R^{(1)},\ldots,R^{(4)}$ be four labelled copies
of $R$.  For $i=2,3$, fix an incident vertex--edge pair
$v_i\lessdot e_i$ in $R^{(i)}$.  Fix also an edge $e_1\in R^{(1)}$ and a
vertex $u_4\in R^{(4)}$.  Set
$$
 Q_0=R^{(1)}\ast R^{(2)}\ast R^{(3)}\ast R^{(4)}.
$$
Define
successively
\begin{align*}
 Q_1&=\mathcal U(Q_0;u_4,e_3),
 &e_3'&=\text{the new copy of $e_3$ in $Q_1$},\\
 Q_2&=\mathcal U(Q_1;e_3',v_3),
 &v_3'&=\text{the new copy of $v_3$ in $Q_2$},\\
 Q_3&=\mathcal U(Q_2;v_3',e_2),
 &e_2'&=\text{the new copy of $e_2$ in $Q_3$},\\
 Q_4&=\mathcal U(Q_3;e_2',v_2),
 &v_2'&=\text{the new copy of $v_2$ in $Q_4$}.
\end{align*}
Consider the cover $e_1\lessdot v_2'$ in $Q_4$.  Finally, put
$ Q=\mathcal U^{215}(Q_4;v_2',e_1)
$.
The join and unzipping preserve the Gorenstein* property, so $Q$ is
Gorenstein*.\\
Now we calculate $\Gamma_Q(t)$. Put $A(t)=\Gamma_R(t)=1+36t$.
For a cover $y\lessdot x$, only the product
$\Gamma_{[\zero,y]}(t)\Gamma_{[x,\one]}(t)$ is needed.  In the first
step, for example,
$$
 [\zero,e_3]\cong
 R^{(1)}\ast R^{(2)}\ast[\zero,e_3]_{R^{(3)}}.
$$
The last factor is a Boolean interval and hence has $\gamma$-polynomial
$1$.  Therefore $\Gamma_{[\zero,e_3]}(t)=A(t)^2$.  The upper interval
$[u_4,\one]$ also has $\gamma$-polynomial $1$.  The same argument applies
at the subsequent steps: 
$$
\begin{array}{c|c|c|c}
\text{step} & y\lessdot x &
 \Gamma_{[\zero,y]}(t) & \Gamma_{[x,\one]}(t)\\ \hline
Q_0\to Q_1 & e_3\lessdot u_4 & A(t)^2 & 1\\
Q_1\to Q_2 & v_3\lessdot e_3' & A(t)^2 & 1\\
Q_2\to Q_3 & e_2\lessdot v_3' & A(t) & 1\\
Q_3\to Q_4 & v_2\lessdot e_2' & A(t) & 1\\
Q_4\to Q & e_1\lessdot v_2' & 1 & 1
\end{array}
$$
For the first four rows, the powers of $A(t)$ come respectively from the
untouched factors $R^{(1)}\ast R^{(2)}$,
$R^{(1)}\ast R^{(2)}$, $R^{(1)}$, and $R^{(1)}$ lying below the lower
element of the cover.

Equation~\eqref{eq:unzipping-gamma-polynomial} now yields a contribution
$2tA(t)^2$, $2tA(t)^2$, $2tA(t)$, and $2tA(t)$ from the four preliminary
unzippings and $2t$ from each of the $215$ final unzippings.  Since
$\Gamma_{Q_0}(t)=A(t)^4$, equation~\eqref{eq:iterated-unzipping-cd-index}
gives
\begin{align*}
 \Gamma_Q(t)
 &=A(t)^4+4t\bigl(A(t)^2+A(t)\bigr)+430t\\
 &=1+582t+8208t^2+191808t^3+1679616t^4.
\end{align*}
Consequently,
\begin{align*}
 \H_Q(x)={}&1+590x+11728x^2+233426x^3+2124190x^4\\
 &\quad+233426x^5+11728x^6+590x^7+x^8.
\end{align*}
Its coefficients fail log-concavity in degree $2$, since
$
 11728^2-590\cdot233426=-175356<0.
$
The final assertion follows from Newton's inequalities.
\end{proof}

Let $\overline Q=Q\setminus\{\widehat0,\widehat1\}$. The proper
order complex $\Delta(\overline Q)$ is a flag PL $7$-sphere;
in fact, it is the boundary complex of a flag simplicial
$8$-polytope. To see this, recall that the proper order complex
of $Q_0$ is the join of four $40$-cycles. Each $40$-cycle is
obtained from a $4$-cycle by successive edge subdivisions,
while the join of four $4$-cycles is the boundary of the
$8$-dimensional cross-polytope. Moreover, unzipping a cover
corresponds to two successive edge subdivisions of the proper
order complex \cite{murai-nevo}.
Since edge subdivisions preserve polytopality
\cite{murai-nevo}, the claim follows. The flag
property holds because every order complex is flag. The work of Aisbett and Volodin \cite{aisbett-volodin} implies that the
$\gamma$-vector of $\Delta(\overline Q)$ is the $f$-vector of a
flag simplicial complex. In particular, it satisfies the
Frankl--F\"uredi--Kalai \cite{frankl-furedi-kalai} inequalities and therefore does not
yield a counterexample to the strong version of the Nevo--Petersen conjecture \cite{nevo-petersen}.

Finally, the preceding theorem also shows that the $h$-polynomial
of a flag simplicial sphere need not be log-concave, thus answering another MathOverflow\footnote{\url{https://mathoverflow.net/questions/496440/flag-spheres-h-polynomials-and-log-concavity}} question by Ferroni, also unanswered after more than a year. 
From the perspective of positivity properties in algebraic combinatorics, there is another aspect that makes this example remarkable: few instances of polynomials appearing ``in nature'' satisfy the property of being $\gamma$-positive while not being log-concave (we refer to \cite{athanasiadis-gamma-positivity} for an extensive survey on $\gamma$-positive families of polynomials).

\begin{table}[H]
\centering
\small
\renewcommand{\arraystretch}{1.25}
\setlength{\tabcolsep}{5pt}
\begin{tabular}{|l|c|c|c|}
\hline
\textbf{property} & \textbf{H} & \textbf{F} & \textbf{G} \\ \hline
coefficientwise nonnegative
 & $\checkmark\ \mathsf{Gor}^\ast$; $\times\ \mathsf{Eul}$
 & $\checkmark\ \mathsf{Gor}^\ast$; $\times\ \mathsf{Eul}$
 & $\checkmark\ \mathsf{Gor}^\ast$; $\times\ \mathsf{Eul}$ \\ \hline
unimodal
 & $\checkmark\ \mathsf{Gor}^\ast$; $\times\ \mathsf{Eul}$
 & $\checkmark\ \mathsf{Gor}^\ast$; $\times\ \mathsf{Eul}$
 & $\checkmark\ \mathsf{Gor}^\ast$; $\times\ \mathsf{Eul}$ \\ \hline
$\gamma$-positive
 & $\checkmark\ \mathsf{Gor}^\ast$; $\times\ \mathsf{Eul}$
 & $\checkmark\ \mathsf{Poly}$; $\times\ \mathsf{Gor}^\ast$
 & $\checkmark\ \mathsf{Poly}$; $\times\ \mathsf{Gor}^\ast$ \\ \hline
log-concave
 & $\times\ \mathsf{Gor}^\ast$; $\mathrm{conj.}\ \mathsf{Poly}$
 & $?\ \mathsf{Gor}^\ast$; $\mathrm{conj.}\ \mathsf{Poly}$
 & $?\ \mathsf{Gor}^\ast$; $\mathrm{conj.}\ \mathsf{Poly}$ \\ \hline
real-rooted
 & $\times\ \mathsf{Gor}^\ast$; $\mathrm{conj.}\ \mathsf{Poly}$
 & $\times\ \mathsf{Gor}^\ast$; $\mathrm{conj.}\ \mathsf{Poly}$
 & $\times\ \mathsf{Gor}^\ast$; $\mathrm{conj.}\ \mathsf{Poly}$ \\ \hline
\end{tabular}
\caption{A summary of positivity properties in the Eulerian Chow framework.  Here
$\mathsf{Eul}$, $\mathsf{Gor}^\ast$, and $\mathsf{Poly}$ denote, respectively,
Eulerian posets, Gorenstein* posets, and face posets of convex
polytopes.  A checkmark means that the property holds throughout the
specified class, while a cross means that a counterexample exists in that
class; a question mark denotes an open case.}
\label{table:eulerianpolyproperties}
\end{table}

\subsection*{AI use statement} Some parts of this paper benefited from the assistance of AI. In particular, as mentioned in the text, the two counterexamples described in Section~\ref{sec:counterexamples} were found via ChatGPT 5.6 Sol, with different degrees of substantial mathematical input by the authors. ChatGPT also contributed a proof of Theorem~\ref{thm:polytopal-augmented-gamma} which was later simplified by the authors. The remainder of the mathematical ideas, results, as well as the writing and proof-reading, were a result of human work carried out by the authors.

\subsection*{Acknowledgments}

LF is a member of the GNSAGA group of the Istituto Nazionale di Alta Matematica (INdAM). TF was supported by the Additional Funding Programme for Mathematical Sciences, delivered by EPSRC (EP/V521917/1) and the Heilbronn Institute for Mathematical Research. The authors also thank Roberto Riccardi for useful conversations.

\bibliographystyle{amsalpha}
\bibliography{bibliography}

\end{document}